\documentclass[10pt]{amsart}

\usepackage[T1]{fontenc}
\usepackage{amsmath,amssymb,mathtools}
\usepackage{hyperref}

\hypersetup{hidelinks}
\allowdisplaybreaks
\numberwithin{equation}{section}

\newtheorem{theorem}{Theorem}[section]
\newtheorem{proposition}[theorem]{Proposition}
\newtheorem{lemma}[theorem]{Lemma}
\newtheorem{corollary}[theorem]{Corollary}
\theoremstyle{definition}

\theoremstyle{remark}
\newtheorem{remark}[theorem]{Remark}

\title[Equality cases for bounded-rank commutators]
{Equality cases for matrix spaces with bounded-rank commutators}
\author[Zhi-Lin Zhang]{Zhi-Lin Zhang}

\address{Independent Researcher, Taipei, Taiwan}

\email{hsa00000@gmail.com}
\date{}
\subjclass[2020]{Primary 15A27; Secondary 15A30, 14M15, 14L30}
\keywords{matrix spaces, bounded-rank commutators, equality cases, commuting subspaces, Grassmannians, irreducible components, Zariski tangent spaces}

\begin{document}

\begin{abstract}
Let \(0\leq k<n\), and let \(\mathcal V\subseteq M_n(\mathbb C)\) be a complex
linear subspace satisfying \(\operatorname{rank}[S,T]\leq k\) for all
\(S,T\in\mathcal V\). Omladi\v{c}, Radjavi, and \v{S}ivic proved the sharp
bound
\(\dim\mathcal V\leq nk+\left\lfloor (n-k)^2/4\right\rfloor+1\)
and conjectured a classification of the equality cases. We prove their
conjecture. If equality holds, then, after a similarity and possibly
transposition, \(\mathcal V\) consists of all block upper-triangular matrices
with arbitrary upper-left and upper-right blocks and with lower-right block
in a maximal-dimensional commuting subspace of \(M_{n-k}(\mathbb C)\).
For \(n-k\geq4\), these commuting subspaces are the classical equality cases
in Schur's theorem; in dimensions \(2\) and \(3\), the additional equality
cases also occur.

At the equality dimension, the rank condition defines a projective algebraic
subset of a Grassmannian. For \(2\leq k\leq n-2\), we determine all of its
irreducible components. If \(n-k\geq4\), there are exactly two components when
\(n-k\) is even and exactly four when \(n-k\) is odd; if
\(n-k\in\{2,3\}\), there are exactly two. When \(n-k\geq4\), the Zariski
tangent space at each such space equals the tangent space to its conjugacy
orbit. When \(n-k\in\{2,3\}\), the two components are obtained by varying the
invariant \(k\)-dimensional subspace and the maximal-dimensional commuting
subspace on the quotient, together with their transposes.
\end{abstract}

\maketitle

\section{Introduction}

The case $k=0$ is the classical problem of determining the largest possible
dimension of a commuting subspace of $M_n(\mathbb C)$. Schur proved the
sharp dimension bound and classified the equality cases \cite{Schur}. We
record the precise result used here in
Theorem~\ref{thm:commuting-classification} below.

Omladi\v{c}, Radjavi, and \v{S}ivic \cite{ORS} introduced the corresponding
bounded-rank commutator problem and conjectured a complete classification of
the equality cases \cite[Conjecture~5]{ORS}. Their results needed below are
recorded in Theorem~\ref{thm:ORS-results}.

Put
\[
m=n-k,
\qquad
s_m=\left\lfloor\frac{m^2}{4}\right\rfloor+1,
\qquad
d_{n,k}=nk+s_m.
\]
For $m\geq2$, let
\begin{equation}\label{eq:ab-choices}
(a,b)\in
\left\{
\left(
\left\lfloor\frac m2\right\rfloor,
\left\lceil\frac m2\right\rceil
\right),
\left(
\left\lceil\frac m2\right\rceil,
\left\lfloor\frac m2\right\rfloor
\right)
\right\}.
\end{equation}
Define
\[
\mathcal C_{a,b}
=
\left\{
\begin{pmatrix}
\lambda I_a&D\\
0&\lambda I_b
\end{pmatrix}:
D\in M_{a\times b}(\mathbb C),\ \lambda\in\mathbb C
\right\}.
\]
Thus $\dim\mathcal C_{a,b}=ab+1=s_m$.
For a subspace $\mathcal V\subseteq M_n(\mathbb C)$, write
\[
\mathcal V^{\mathsf T}=\{S^{\mathsf T}:S\in\mathcal V\}.
\]

We use the following form of Schur's theorem.

\begin{theorem}[Commuting subspaces of maximum dimension]
\label{thm:commuting-classification}
Let $m\geq1$, and let
$\mathcal C\subseteq M_m(\mathbb C)$ be a commuting linear subspace. Then
\[
\dim\mathcal C\leq s_m.
\]
If $\dim\mathcal C=s_m$, then, up to similarity, $\mathcal C$ is one of the
following:
\begin{enumerate}
\item If $m\geq4$, then
$\mathcal C=\mathcal C_{a,b}$ for one of the choices in
\eqref{eq:ab-choices}.
\item\label{item:C-m3} If $m=3$, then $\mathcal C$ is one of
$\mathcal C_{1,2}$, $\mathcal C_{2,1}$, or
\[
\left\{
\operatorname{diag}(\alpha,\beta,\gamma):
\alpha,\beta,\gamma\in\mathbb C
\right\},
\quad
\left\{
\begin{pmatrix}
\alpha&x&0\\
0&\alpha&0\\
0&0&\beta
\end{pmatrix}:
\alpha,\beta,x\in\mathbb C
\right\},
\quad
\mathbb C[E_{12}+E_{23}].
\]
\item\label{item:C-m2} If $m=2$, then $\mathcal C$ is either
$\mathcal C_{1,1}$ or the diagonal algebra in $M_2(\mathbb C)$.
\item If $m=1$, then $\mathcal C=\mathbb C I_1$.
\end{enumerate}
\end{theorem}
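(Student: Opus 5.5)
Since the paper calls this Schur's theorem, one option is simply to cite \cite{Schur}, or the later expositions of Jacobson and Mirzakhani. If I were to prove it directly, I would first reduce to algebras. The unital associative algebra $\mathcal A$ generated by $\mathcal C$ and $I_m$ is commutative and contains $\mathcal C$. So it suffices to prove $\dim\mathcal A\le s_m$. In the equality case this gives $\mathcal C=\mathcal A$, so $\mathcal C$ is a unital commutative algebra.

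Next I would split $\mathbb C^m$ into the joint generalized eigenspaces of $\mathcal A$, of dimensions $m_1,\dots,m_r$. This gives $\mathcal A=\bigoplus_i(\mathbb C I_{m_i}+\mathcal N_i)$, where each $\mathcal N_i$ is a commutative algebra of nilpotent matrices. The whole problem then reduces to the local bound
\[
\dim\mathcal N\le\left\lfloor\frac{p^2}{4}\right\rfloor
\]
for a commutative nilpotent subalgebra $\mathcal N\subseteq M_p(\mathbb C)$, together with its equality cases.

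I would prove the local bound by induction on $p$, following the approach in Mirzakhani's proof. Set $U=\mathcal N\mathbb C^p$; it is a proper subspace by nilpotency, and I write $u=\dim U$. Commutativity implies that an element of $\mathcal N$ which vanishes on a suitable complement of $U$ is controlled by its restriction to $U$. Choosing a basis adapted to $U$, one gets a map from $\mathcal N$ into $\operatorname{Hom}(\mathbb C^p/U,U)$, plus a residual term. The residual term is a commuting space on a smaller space, handled by induction. This yields a bound of the form $u(p-u)+(\text{smaller term})$, and optimizing over $u$ gives $\lfloor p^2/4\rfloor$.

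Summing over blocks then gives $\dim\mathcal A\le\sum_i\bigl(\lfloor m_i^2/4\rfloor+1\bigr)$. The elementary inequality
\[
\left\lfloor\frac{x^2}{4}\right\rfloor+1+\left\lfloor\frac{y^2}{4}\right\rfloor+1\le\left\lfloor\frac{(x+y)^2}{4}\right\rfloor+1
\]
holds, and it is strict unless $\{x,y\}\subseteq\{1,2\}$ with $x+y\le3$. Applied repeatedly, it yields $\dim\mathcal A\le s_m$.

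For the equality analysis I would first use the inequality above. It shows that for $m\ge4$ equality forces $r=1$, so $\mathcal A$ is local: $\mathcal A=\mathbb C I+\mathcal N$ with $\dim\mathcal N=\lfloor m^2/4\rfloor$. For $m\le3$ it permits the splittings $1+1$, $1+1+1$ and $2+1$, and these give the diagonal algebras and the algebra $\mathcal C_{1,1}\oplus\mathbb C$ in the list.

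In the local case, I would track where the induction is tight. Equality should force $\mathcal N^2=0$ and $\mathcal N=\operatorname{Hom}(\mathbb C^m/U,U)$ with $u\in\{\lfloor m/2\rfloor,\lceil m/2\rceil\}$, embedded through the decomposition $\mathbb C^m=U\oplus W$. Indeed, if $\mathcal N^2=0$ then $\mathcal N\subseteq\{X: XU=0,\ X\mathbb C^m\subseteq U\}$, which has dimension at most $u(m-u)$ when $\mathcal N\mathbb C^m=U$ and $\mathcal N U=0$. Taking a basis with $U$ first then gives exactly $\mathcal C_{a,b}$.

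What remains is to rule out $\mathcal N^2\ne0$ at equality when $m\ge4$. For $m=3$, the only such case is $\mathbb C[J_3]$, where $J_3=E_{12}+E_{23}$; for $m\le2$ the lists can be checked directly.

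I expect this exclusion of $\mathcal N^2\neq0$ to be the main obstacle. My first attempt would be to take $K=\ker\mathcal N$, the joint kernel, and note that $\mathcal N$ acts faithfully on $\mathbb C^m/K$ if $\mathcal N^2\neq0$. I would then show that the induced commutative nilpotent algebra on a space of dimension $m-\dim K$, plus the $\operatorname{Hom}$-part, has dimension at most $\lfloor m^2/4\rfloor-1$ once $m\ge4$. The small cases $m\le5$ would be checked by hand.
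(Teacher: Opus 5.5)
Citing Schur's theorem is exactly what the paper does: Theorem~\ref{thm:commuting-classification} is quoted, not proved, so that option is fine. Your direct sketch, however, is not yet a proof. The reduction to unital commutative algebras, the splitting into local blocks, and the elementary inequality with equality exactly for $(x,y)\in\{(1,1),(1,2),(2,1)\}$ are all correct.

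The local bound is misdescribed. A count $u(p-u)+(\text{smaller term})$, with a nonzero residual bounded by induction, overshoots $\lfloor p^2/4\rfloor$ at the balanced value of $u$. So ``optimizing over $u$'' does not give the bound. The missing observation is that the residual is zero. Let $W$ be a complement of $U=\mathcal N\mathbb C^p$. Nilpotency gives $U=\mathcal NW+\mathcal N^2W+\cdots=\mathcal NW$, since $\mathcal N^j\subseteq\mathcal N$. If $N\in\mathcal N$ kills $W$, then $N(N'w)=N'(Nw)=0$, so $N$ also kills $U$, and hence $N=0$. Thus the restriction map $\Phi\colon\mathcal N\to\operatorname{Hom}(W,U)$ is injective, and $\dim\mathcal N\le u(p-u)\le\lfloor p^2/4\rfloor$ with no induction.

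The main gap is the equality case for $m\ge4$, which you leave open. The route you suggest for it starts from a false claim. A nonzero nilpotent $\mathcal N$ never acts faithfully on $\mathbb C^m/K$. Indeed, if $\mathcal N^t=0\neq\mathcal N^{t-1}$, then $\mathcal N\cdot\mathcal N^{t-1}=0$, so $\mathcal N^{t-1}$ maps $\mathbb C^m$ into $K$. For example, in $\mathbb C[E_{12}+E_{23}]$ the element $E_{13}$ induces $0$ on $\mathbb C^3/K$. The count ``quotient algebra plus $\operatorname{Hom}$-part'' is also too weak. With $\kappa=\dim K$ it gives only $\lfloor (m-\kappa)^2/4\rfloor+\kappa(m-\kappa)$, which is $5>4$ already for $m=4$, $\kappa=1$.

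The injection $\Phi$ closes the gap directly:
\begin{enumerate}
\item At equality $\Phi$ is bijective and $u\in\{\lfloor m/2\rfloor,\lceil m/2\rceil\}$, so $\dim W\ge2$ once $m\ge4$.
\item Put $\rho(f)=\Phi^{-1}(f)|_U\in\operatorname{End}(U)$. Commutativity applied to vectors of $W$ gives $\rho(f)\circ g=\rho(g)\circ f$ for all $f,g\in\operatorname{Hom}(W,U)$.
\item Take $f(w)=\xi(w)x$ and $g(w)=\zeta(w)y$ with $\xi,\zeta\in W^*$ linearly independent. Evaluating at a $w$ with $\zeta(w)=1$ and $\xi(w)=0$ gives $\rho(f)y=0$.
\item Hence $\rho$ vanishes on all rank-one maps, so $\rho=0$. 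This means $\mathcal NU=0$ and $\mathcal N^2=0$, and your square-zero argument then yields $\mathcal C_{a,b}$.
\end{enumerate}
This is the same rank-one trick the paper uses to force $R=0$ in Lemma~\ref{lem:RUV-equations}. For $m=3$ and $u=2$ one has $\dim W=1$, so the argument does not apply, and that is exactly where $\mathbb C[E_{12}+E_{23}]$ enters.
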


The bounded-rank results of Omladi\v{c}, Radjavi, and \v{S}ivic needed here
are the following consequence of \cite[Theorems~2, 4, 12, and~13]{ORS}.

\begin{theorem}[Omladi\v{c}--Radjavi--\v{S}ivic]
\label{thm:ORS-results}
Let $n\geq1$, let $0\leq k<n$, and let
$\mathcal V\subseteq M_n(\mathbb C)$ be a complex linear subspace satisfying
\[
\operatorname{rank}[S,T]\leq k
\qquad(S,T\in\mathcal V).
\]
Then
\[
\dim\mathcal V\leq d_{n,k}.
\]
If $\dim\mathcal V=d_{n,k}$ and either $k=1$, $k=n-1$, or
$\mathcal V$ is an algebra, then there exists
$P\in\operatorname{GL}_n(\mathbb C)$ such that either
$P^{-1}\mathcal VP$ or $P^{-1}\mathcal V^{\mathsf T}P$ equals
\begin{equation}\label{eq:main-block-space}
\left\{
\begin{pmatrix}
A&B\\
0&C
\end{pmatrix}:
A\in M_k(\mathbb C),\
B\in M_{k\times m}(\mathbb C),\
C\in\mathcal C
\right\},
\end{equation}
where $\mathcal C$ is one of the spaces in
Theorem~\ref{thm:commuting-classification}. When $k=0$, the zero-sized
blocks in \eqref{eq:main-block-space} are omitted.
\end{theorem}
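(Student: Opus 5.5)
The statement is a repackaging of results in \cite{ORS}, so the plan is to deduce each part from the corresponding theorem there and to check that the normal forms match the block form \eqref{eq:main-block-space}. The only real work is bookkeeping: ORS may state their normal forms with the invariant subspace in a different position, with the roles of the blocks exchanged, or up to transposition. Each such form must be converted into \eqref{eq:main-block-space} by a single similarity $P$, possibly after replacing $\mathcal V$ by $\mathcal V^{\mathsf T}$.

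\emph{Step 1 (the bound).} The inequality $\dim\mathcal V\leq d_{n,k}=nk+\lfloor m^2/4\rfloor+1$ is \cite[Theorem~2]{ORS}. I would check that the normalization of their bound agrees with $d_{n,k}$ in the two extreme cases. For $k=0$ it reduces to Schur's bound $s_n$ from Theorem~\ref{thm:commuting-classification}. For $k=n-1$ it gives $n(n-1)+1$, and this is attained by \eqref{eq:main-block-space} with $\mathcal C=\mathbb C I_1$. Conversely, every space of the form \eqref{eq:main-block-space} has dimension $k^2+km+\dim\mathcal C=nk+s_m$. Its commutators are supported in the first $k$ rows, because $\mathcal C$ is commuting, so they have rank at most $k$. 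Hence the bound is sharp.

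\emph{Step 2 (the equality cases).} When $\mathcal V$ is an algebra, I would invoke \cite[Theorem~4]{ORS}. When $k=1$ or $k=n-1$, I would invoke \cite[Theorems~12 and~13]{ORS}. In each case ORS produce a $k$-dimensional subspace $U\subseteq\mathbb C^n$ with the following properties, either for $\mathcal V$ or for $\mathcal V^{\mathsf T}$:
\begin{itemize}
\item[(i)] $U$ is invariant under every element of the space;
\item[(ii)] the space contains every operator whose range lies in $U$ (all of $A$ and $B$);
\item[(iii)] the compressions to $\mathbb C^n/U$ form a commuting space $\mathcal C$ of dimension $s_m$.
\end{itemize}
The transpose option arises because the dual picture, a codimension-$k$ subspace on which the space acts trivially modulo a commuting quotient, becomes (i)--(iii) after transposition.

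Choosing $P$ so that its first $k$ columns span $U$ puts the space into \eqref{eq:main-block-space}. Condition (iii) combined with Theorem~\ref{thm:commuting-classification} then lets one replace $\mathcal C$ by one of the listed spaces. This uses a further block-diagonal similarity $\operatorname{diag}(I_k,Q)$, which preserves the shape of \eqref{eq:main-block-space}.

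\emph{Main obstacle.} The difficulty lies in verifying (ii) and the dimension count in (iii) from the ORS statements as written. If they only give $\dim\mathcal V=d_{n,k}$ together with invariance of $U$, I would argue as follows. The compression map $\mathcal V\to M_m(\mathbb C)$ has kernel contained in the space of operators mapping into $U$, which has dimension $nk$. Its image is commuting, since $[S,T]$ maps into $U$ and therefore vanishes modulo $U$, so the image has dimension at most $s_m$. Equality $\dim\mathcal V=nk+s_m$ then forces both inequalities to be equalities, which yields (ii) and (iii) at once.

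For $k=0$ the statement is Theorem~\ref{thm:commuting-classification} itself, with the zero-sized blocks omitted.
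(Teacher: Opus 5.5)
Your main line matches the paper. The paper gives no argument for this theorem beyond attributing it to \cite[Theorems~2, 4, 12, and~13]{ORS}. Your sharpness check and the normalization by $\operatorname{diag}(I_k,Q)$ are correct bookkeeping on top of that citation.

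One step in your fallback under ``Main obstacle'' is wrong. Invariance of $U$ under $\mathcal V$ does not imply $\operatorname{im}[S,T]\subseteq U$. So you cannot conclude that the compressions to $\mathbb C^n/U$ commute. Without that, the bound $s_m$ on the dimension of the image is unavailable, and equality $\dim\mathcal V=nk+s_m$ forces nothing.

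Here is a concrete failure in a case you invoke. Take $k=1$ and $m\geq2$. Let $\mathcal V$ be the transpose of a space \eqref{eq:main-block-space}, i.e.\ all $\begin{pmatrix}\alpha&0\\ B&C\end{pmatrix}$ with $\alpha\in\mathbb C$, $B\in M_{m\times1}(\mathbb C)$, and $C\in\mathcal C^{\mathsf T}$.
\begin{itemize}
\item A common eigenvector $w$ of the commuting space $\mathcal C^{\mathsf T}$ gives a $\mathcal V$-invariant line $U=0\oplus\mathbb C w$, and $\dim\mathcal V=d_{n,1}$.
\item Let $S_1$ have $\alpha=1$ and $B=C=0$, and let $S_2$ have $\alpha=0$, $C=0$ and $B\notin\mathbb C w$. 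Then $[S_1,S_2]=-S_2$, whose range is not contained in $U$.
\item Hence the compressions of $S_1,S_2$ to $\mathbb C^n/U$ do not commute, and (ii) and (iii) fail for this $U$. Your counting argument would nevertheless ``prove'' them.
\end{itemize}

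The repair is to require that $U$ contain every commutator range, for example $U=\sum_{S,T\in\mathcal V}\operatorname{im}[S,T]$ when this has dimension $k$. Then $[\overline S,\overline T]=\overline{[S,T]}=0$, and your count does give (ii) and (iii). Which of $\mathcal V$ and $\mathcal V^{\mathsf T}$ admits such a $U$ is exactly the information you must take from \cite{ORS}. Compare Proposition~\ref{prop:components-distinct}, where this span is $k$-dimensional on one side and all of $\mathbb C^n$ on the other.
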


Omladi\v{c}, Radjavi, and \v{S}ivic conjectured that the conclusion of
Theorem~\ref{thm:ORS-results} remains true for every
equality case, without the additional assumptions $k=1$, $k=n-1$, or
$\mathcal V$ being an algebra \cite[Conjecture~5]{ORS}. Our main theorem
proves this conjecture.

\begin{theorem}[Equality classification]\label{thm:main}
Let $n\geq1$, let $0\leq k<n$, and let
$\mathcal V\subseteq M_n(\mathbb C)$ be a complex linear subspace such that
\[
\operatorname{rank}[S,T]\leq k
\qquad(S,T\in\mathcal V),
\qquad
\dim\mathcal V=d_{n,k}.
\]
Then there exists $P\in\operatorname{GL}_n(\mathbb C)$ such that either
$P^{-1}\mathcal VP$ or $P^{-1}\mathcal V^{\mathsf T}P$ equals
\eqref{eq:main-block-space}, where $\mathcal C$ is one of the possibilities
in Theorem~\ref{thm:commuting-classification}.
\end{theorem}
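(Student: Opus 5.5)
The plan is to reduce to the algebra case of Theorem~\ref{thm:ORS-results}: show that any equality case $\mathcal V$ is, after similarity and possibly transposition, closed under multiplication, or directly produce a $k$-dimensional subspace $W\subseteq\mathbb C^n$ that is invariant under every element of $\mathcal V$ (or under every element of $\mathcal V^{\mathsf T}$). The cases $k=0$, $k=1$ and $k=n-1$ are already handled by Theorem~\ref{thm:commuting-classification} and Theorem~\ref{thm:ORS-results}. So we assume $2\le k\le n-2$, and hence $m\ge2$, and we argue by induction on $k$, with the case $k=1$ as the base.

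\textbf{Step 1 (finding a large invariant flag).} Pass to a generic $S\in\mathcal V$ and study $\ker\operatorname{ad}_S|_{\mathcal V}$. Since $\operatorname{rank}[S,T]\le k$ for all $T\in\mathcal V$, the image $[S,\mathcal V]$ is a linear space of matrices of rank $\le k$. By the Flanders/Atkinson--Lloyd classification of bounded-rank spaces, such a space of large dimension is compression-type. That is, either all its images lie in a fixed $k$-dimensional subspace, or all its kernels contain a fixed codimension-$k$ subspace, or it is one of finitely many small exceptional "mixed" types. The dimension count $\dim\mathcal V=nk+s_m$ forces $\dim[S,\mathcal V]$ to be large. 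The idea is to show that the mixed configurations would yield $\dim\mathcal V<d_{n,k}$, by comparing with the ORS bound applied to a compressed space.

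\textbf{Step 2 (making the subspace uniform in $S$).} After transposing if necessary, we obtain for generic $S$ a $k$-dimensional subspace $W_S\supseteq\operatorname{Im}[S,T]$ for all $T$. We then show that $W_S$ is independent of $S$ on a Zariski open set, and hence that $W:=W_S$ contains $\operatorname{Im}[S,T]$ for all $S,T\in\mathcal V$. The intended argument is a polarization: the assignment $S\mapsto W_S$ is a rational map to a Grassmannian, and differentiating the condition $[S+tS',T]\,\mathbb C^n\subseteq W_{S+tS'}$ should force the map to be constant. Once this holds, every commutator maps into $W$.

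\textbf{Step 3 (passing to the quotient and counting).} Next we deduce that $W$, or a related subspace, is $\mathcal V$-invariant. Consider $\mathcal V'=\{T\in\mathcal V: TW\subseteq W\}$, and induce maps on $W$ and on $\mathbb C^n/W$. The induced space on $\mathbb C^n/W\cong\mathbb C^m$ is commuting, because all commutators land in $W$. By Theorem~\ref{thm:commuting-classification} its dimension is therefore at most $s_m$. The kernel of this map consists of matrices sending $\mathbb C^n$ into $W$, so it has dimension at most $nk$. Hence $\dim\mathcal V\le nk+s_m$ with equality only if the kernel is all of $\operatorname{Hom}(\mathbb C^n,W)$, and the quotient space is a maximal commuting subspace $\mathcal C$. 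One still has to check that the quotient map is well defined. This is where invariance of $W$ enters: if some $T$ fails to preserve $W$, take a rank-one $E\in\operatorname{Hom}(\mathbb C^n,W)\subseteq\mathcal V$ and examine $[T,E]$. Its image is not contained in $W$, which contradicts Step 2. Choosing a basis adapted to $W$ then gives exactly \eqref{eq:main-block-space}.

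The \textbf{main obstacle} is Steps 1--2: extracting a single common subspace $W$ from the pointwise rank condition. The delicate points are ruling out mixed compression types for $[S,\mathcal V]$ and proving that $W_S$ is constant. If the direct approach fails, the fallback is induction on $k$. Find a vector $v$, or a hyperplane, such that the compression of $\mathcal V$ to $v^\perp$, or to a quotient, satisfies the rank bound $k-1$ in dimension $n-1$, and has dimension at least $(n-1)(k-1)+s_m=d_{n,k}-(n+k-1)$. Then apply the inductive hypothesis and lift the resulting block structure back to $\mathcal V$.
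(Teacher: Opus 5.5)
There is a genuine gap. Everything rests on Steps~1--2, which assert a single $k$-plane $W$ with $\operatorname{im}[S,T]\subseteq W$ for all $S,T\in\mathcal V$ (after possibly transposing). Neither step is carried out, and producing such a $W$ is essentially the whole content of the conjecture: once $W$ is known to be $\mathcal V$-invariant, the rest is your count plus Schur's theorem.

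\textbf{Step 1.} You give no upper bound for $\dim\ker(\operatorname{ad}_S|_{\mathcal V})$, so the claim that $\dim[S,\mathcal V]$ is large is unsupported. Even in the model, $\dim[S,\mathcal A_{k,a,b}]=(n-1)k$ for generic $S$, because the centralizer of $S$ in $\mathcal A_{k,a,b}$ has dimension $k+ab+1$. By contrast, the mixed compression spaces $\{T:T(U)\subseteq W'\}$ with $\operatorname{codim}U=1$ and $\dim W'=k-1$ consist of matrices of rank at most $k$ and have the larger dimension $nk-k+1$. So no dimension-based result on large spaces of bounded rank can exclude mixed (or primitive) configurations here. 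The comparison with the ORS bound that is supposed to do this is never specified.

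\textbf{Step 2.} The constancy of $S\mapsto W_S$ is only asserted (``should force''); no computation is given.

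\textbf{Step 3.} This step is circular as written. You derive $\operatorname{Hom}(\mathbb C^n,W)\subseteq\mathcal V$ from equality in the count, but that count applies only to $\mathcal V'=\{T\in\mathcal V:TW\subseteq W\}$. Equality $\dim\mathcal V'=d_{n,k}$ already means $\mathcal V'=\mathcal V$, which is exactly the invariance you then deduce from that inclusion. Invariance is not a formal consequence of Step~2. A direct argument would have to control $\pi\mathcal V\subseteq\operatorname{Hom}(\mathbb C^n,\mathbb C^n/W)$, where $\pi\colon\mathbb C^n\to\mathbb C^n/W$, under the condition $\pi ST=\pi TS$, and this condition does not descend to the quotient.

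\textbf{Fallback.} The induction on $k$ gives no mechanism for finding $v$ with compressed rank bound $k-1$ and compressed dimension exactly $d_{n-1,k-1}$, nor for lifting the block form back.

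\textbf{The missing idea.} The paper never constructs $W$ for an arbitrary equality space; it works on the projective equality locus $\mathfrak X_{n,k}\subseteq\operatorname{Gr}(d_{n,k},M_n(\mathbb C))$.
\begin{enumerate}
\item Each irreducible component is $\operatorname{GL}_n(\mathbb C)$-stable, so by the Borel fixed-point theorem it contains a point fixed by the upper-triangular Borel subgroup.
\item Theorem~\ref{thm:ORS-fixed}, from \cite[Proposition~11]{ORS}, identifies those fixed points, up to similarity and transposition, as the spaces $\mathcal A_{k,a,b}$.
\item A Zariski tangent-space computation at $\mathcal A_{k,a,b}$ (Corollary~\ref{cor:ambient-tangent-injection} together with Sections~\ref{sec:m-ge-4} and~\ref{sec:m-2-3}) bounds $\dim T_{\mathcal A_{k,a,b}}\mathfrak X_{n,k}$ by the dimension of an explicit closed irreducible family through $\mathcal A_{k,a,b}$. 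That family is $\mathcal O_{k,a,b}$ if $m\geq4$ and $\mathfrak Y^+_{n,k}$ if $m\in\{2,3\}$.
\item Lemma~\ref{lem:component-dimension-criterion} then shows this family is the entire component.
\end{enumerate}
Hence every equality case lies in an explicit family, and the invariant $k$-plane comes for free from that family's description. Only at this point does Schur's classification on the quotient enter, as in your Step~3.
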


A preliminary version of this work treated the special case $(n,k)=(6,2)$ in
\cite{Zhang62}. The present paper gives a uniform argument for arbitrary
$(n,k)$ and replaces the case-specific tangent-space computation used there by
a reduction to the deformation problem for maximal-dimensional commuting
subspaces on the quotient. We also determine the irreducible components of
the equality locus in the Grassmannian and the Zariski tangent spaces at the
points $\mathcal A_{k,a,b}$ defined below.

The case $k=0$ is covered by
Theorem~\ref{thm:commuting-classification}, and the cases
$k=1$ and $k=n-1$ are covered by
Theorem~\ref{thm:ORS-results}. Accordingly, the proof
below concerns the remaining range
\begin{equation}\label{eq:standing-range}
2\leq k\leq n-2,
\qquad
m\geq2.
\end{equation}
The boundary cases are restored in the final proof of
Theorem~\ref{thm:main}.

For the geometric formulation, define
\[
\mathfrak X_{n,k}
=
\left\{
\mathcal V\in\operatorname{Gr}(d_{n,k},M_n(\mathbb C)):
\operatorname{rank}[S,T]\leq k
\text{ for all }S,T\in\mathcal V
\right\}.
\]
For either choice $(a,b)$ in \eqref{eq:ab-choices}, define
\begin{equation}\label{eq:A-kab}
\mathcal A_{k,a,b}
=
\left\{
\begin{pmatrix}
A&B\\
0&C
\end{pmatrix}:
A\in M_k(\mathbb C),\ B\in M_{k\times m}(\mathbb C),\
C\in\mathcal C_{a,b}
\right\}.
\end{equation}
The lower-right block in \eqref{eq:A-kab} is taken relative to
$\mathbb C^m=\mathbb C^a\oplus\mathbb C^b$. Thus
$\mathcal A_{k,a,b}$ is the space in \eqref{eq:main-block-space} obtained
by taking $\mathcal C=\mathcal C_{a,b}$. Since the lower-right block of
every commutator in $\mathcal A_{k,a,b}$ is zero,
$\mathcal A_{k,a,b}\in\mathfrak X_{n,k}$.

For $m\geq2$, let
\[
\mathfrak C_m
=
\left\{
\mathcal C\in\operatorname{Gr}(s_m,M_m(\mathbb C)):
[C,D]=0\text{ for all }C,D\in\mathcal C
\right\}.
\]
If $\mathcal E\subseteq\mathbb C^n$ is a $k$-plane, put
$\mathcal Q_{\mathcal E}=\mathbb C^n/\mathcal E$. For an
$s_m$-dimensional commuting subspace
$\mathcal C\subseteq\operatorname{End}(\mathcal Q_{\mathcal E})$, set
\begin{equation}\label{eq:V-EC}
\mathcal V_{\mathcal E}(\mathcal C)
=
\left\{
T\in\operatorname{End}(\mathbb C^n):
T(\mathcal E)\subseteq\mathcal E,
\ \overline T\in\mathcal C
\right\},
\end{equation}
where $\overline T$ denotes the endomorphism induced on
$\mathcal Q_{\mathcal E}$. Let $\mathfrak Y^+_{n,k}$ be the set of all
spaces \eqref{eq:V-EC}, and let
\begin{equation}\label{eq:Y-minus}
\mathfrak Y^-_{n,k}
=
\left\{\mathcal V^{\mathsf T}:\mathcal V\in\mathfrak Y^+_{n,k}\right\}.
\end{equation}

The following geometric theorem is the main result in the range
\eqref{eq:standing-range}.

For integers $0\leq r<s\leq n$, let $\operatorname{Fl}(r,s;n)$ denote the
partial flag variety of pairs $U\subset H\subset\mathbb C^n$ with
$\dim U=r$ and $\dim H=s$.

\begin{theorem}[Geometric classification]\label{thm:components-intro}
Assume $2\leq k\leq n-2$.
\begin{enumerate}
\item If $m\geq4$, then, for each choice $(a,b)$ in
\eqref{eq:ab-choices}, the two conjugacy orbits
\[
\operatorname{GL}_n(\mathbb C)\cdot\mathcal A_{k,a,b},
\qquad
\operatorname{GL}_n(\mathbb C)\cdot\mathcal A_{k,a,b}^{\mathsf T}
\]
are irreducible components of $\mathfrak X_{n,k}$, and all these components
are pairwise disjoint. Consequently, $\mathfrak X_{n,k}$ has exactly two
irreducible components when $m$ is even and exactly four when $m$ is odd.
For each choice $(a,b)$, both displayed orbits are isomorphic to
$\operatorname{Fl}(k,k+a;n)$ and have dimension $k(n-k)+ab$. Moreover,
\[
T_{\mathcal A_{k,a,b}}\mathfrak X_{n,k}
=
T_{\mathcal A_{k,a,b}}
\bigl(\operatorname{GL}_n(\mathbb C)\cdot\mathcal A_{k,a,b}\bigr).
\]
\item If $m\in\{2,3\}$, then $\mathfrak X_{n,k}$ has exactly two
irreducible components, namely $\mathfrak Y^+_{n,k}$ and
$\mathfrak Y^-_{n,k}$. These two components are disjoint. Their dimension is
$k(n-k)+2$ for $m=2$ and $k(n-k)+6$ for $m=3$.
\end{enumerate}
\end{theorem}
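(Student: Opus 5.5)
The plan is to deduce the component structure from the set-theoretic classification. I will first establish Theorem~\ref{thm:main} in the range \eqref{eq:standing-range}, or at least its content $\mathfrak X_{n,k}=\mathfrak Y^+_{n,k}\cup\mathfrak Y^-_{n,k}$. After that, everything reduces to the geometry of $\mathfrak C_m$, transported along the Grassmannian of invariant $k$-planes.

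\textbf{Step 1 (structure of $\mathfrak Y^{\pm}_{n,k}$).} Consider the incidence variety
\[
\{(\mathcal E,\mathcal C):\ \mathcal E\in\operatorname{Gr}(k,\mathbb C^n),\ \mathcal C\ \text{an }s_m\text{-dimensional commuting subspace of }\operatorname{End}(\mathcal Q_{\mathcal E})\}.
\]
It is a locally trivial fibre bundle over $\operatorname{Gr}(k,n)$ with fibre $\mathfrak C_m$, and $(\mathcal E,\mathcal C)\mapsto\mathcal V_{\mathcal E}(\mathcal C)$ maps it onto $\mathfrak Y^+_{n,k}$.

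This map is injective. Indeed, $\mathcal V_{\mathcal E}(\mathcal C)$ contains the $nk$-dimensional space of all $T$ with $T(\mathbb C^n)\subseteq\mathcal E$. I expect $\mathcal E$ to be recovered as the sum of the images of all rank-$\le k$ elements of a suitable canonical subspace, for example the span of commutators, using $k\ge2$. Then $\mathcal C$ is the image of $\mathcal V_{\mathcal E}(\mathcal C)$ in $\operatorname{End}(\mathcal Q_{\mathcal E})$.

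The components and dimension of $\mathfrak Y^+_{n,k}$ therefore come from those of $\mathfrak C_m$, via Theorem~\ref{thm:commuting-classification}:
\begin{enumerate}
\item For $m\ge4$, $\mathfrak C_m$ is the union of the orbits $\operatorname{GL}_m\cdot\mathcal C_{a,b}$. The stabilizer of $\mathcal C_{a,b}$ is the parabolic subgroup fixing $\mathbb C^a$, so each orbit is $\operatorname{Gr}(a,m)$. These orbits are closed, of dimension $ab$, and disjoint. There is one of them if $m$ is even and two if $m$ is odd.
\item For $m=2$, $\mathfrak C_2$ is the closure of the orbit of the diagonal algebra, of dimension $2$. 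It contains $\mathcal C_{1,1}$ as a limit.
\item For $m=3$, the diagonal orbit has dimension $9-3=6$. I will check that every other space in item~\eqref{item:C-m3} lies in its closure by explicit one-parameter degenerations, so $\mathfrak C_3$ is irreducible of dimension $6$.
\end{enumerate}
In the case $m\ge4$ the resulting pieces of $\mathfrak Y^+_{n,k}$ are exactly the orbits $\operatorname{GL}_n\cdot\mathcal A_{k,a,b}$. These are homogeneous spaces $\operatorname{GL}_n/P$, where $P$ is the parabolic subgroup stabilizing the flag $\mathcal E\subset\mathcal E+\mathbb C^a$, so each orbit is $\operatorname{Fl}(k,k+a;n)$, of dimension $k(n-k)+ab$. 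Applying transposition gives the corresponding statements for $\mathfrak Y^-_{n,k}$.

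\textbf{Step 2 (disjointness).} Suppose $\mathcal V\in\mathfrak Y^+_{n,k}\cap\mathfrak Y^-_{n,k}$. Then every element of $\mathcal V$ preserves both a $k$-plane $\mathcal E$ and an $m$-plane $\mathcal F$; the latter is the annihilator of the transposed invariant subspace. Since $\mathcal V\supseteq\operatorname{Hom}(\mathbb C^n,\mathcal E)$, we must have $\mathcal E\subseteq\mathcal F$.

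Now restrict $\operatorname{Hom}(\mathcal F,\mathcal E)\subseteq\operatorname{End}(\mathcal F)$ to $\mathcal F$. These restrictions must fit inside a structure whose quotient data is commuting of dimension $s_m$. Using $k\ge2$, this family contains pairs whose commutator restricted to $\mathcal F$ has rank too large, or else forces a dimension count exceeding $d_{n,k}$. Either way we get a contradiction.

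Disjointness of the distinct orbits for $m\ge4$ follows because their invariants differ: $\mathcal E$ together with $a$ distinguishes the $(a,b)$ choices, and Step 2 separates an orbit from its transpose.

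\textbf{Step 3 (tangent space; the main obstacle).} It remains to show that each closed orbit, or each $\mathfrak Y^\pm$, is a whole component rather than lying inside something larger. For $m\in\{2,3\}$ this follows from the set-theoretic equality $\mathfrak X_{n,k}=\mathfrak Y^+_{n,k}\cup\mathfrak Y^-_{n,k}$, irreducibility, and disjointness. For $m\ge4$ I will prove the tangent space statement directly, since it implies that the orbit is an irreducible component and that $\mathfrak X_{n,k}$ is smooth there.

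A tangent vector at $\mathcal A=\mathcal A_{k,a,b}$ is a map $\varphi\in\operatorname{Hom}(\mathcal A,M_n/\mathcal A)$. The condition is that all $(k+1)$-minors of
\[
[S+\varepsilon\varphi S,\ T+\varepsilon\varphi T]
\]
vanish modulo $\varepsilon^2$, for all $S,T\in\mathcal A$. For generic $S,T\in\mathcal A$ the commutator has rank exactly $k$, with image $\mathcal E$ and lower-right block zero. The first-order condition then says that the lower-right block of $[\varphi S,T]+[S,\varphi T]$, after compression through the relevant kernel and cokernel, must vanish.

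I would first use the orbit action to normalize away the component of $\varphi$ that moves $\mathcal E$, that is, the lower-left part. Next I would show that the remaining linearized condition on the quotient is exactly the linearized commutativity condition for $\mathcal C_{a,b}\subseteq M_m$. The known result that $\mathfrak C_m$ has tangent space at $\mathcal C_{a,b}$ equal to the orbit tangent, of dimension $ab$, then identifies the full tangent space with the orbit tangent space, which has dimension $k(n-k)+ab$.

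The delicate point is the control of the $\varphi$-components that are off-diagonal relative to $\mathcal E$. Genericity arguments must rule out first-order deformations that tilt $\mathcal E$ differently for different elements. I will handle this by choosing $S,T$ with $[S,T]$ having full rank $k$ inside $\mathcal E$, and by exploiting the freedom in the blocks $A$ and $B$.
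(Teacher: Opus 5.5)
Your argument has a genuine gap at its foundation. Two things rest on the set-theoretic equality $\mathfrak X_{n,k}=\mathfrak Y^+_{n,k}\cup\mathfrak Y^-_{n,k}$: the \emph{number} of components, and the whole case $m\in\{2,3\}$. You say you will ``first establish'' this equality, but you give no argument for it. In the range $2\le k\le n-2$ it is precisely the Omladi\v{c}--Radjavi--\v{S}ivic conjecture, i.e.\ Theorem~\ref{thm:main}. It is not available as input; in the paper it is \emph{deduced from} the statement you are proving. Without it, your Step~3 proves nothing for $m\in\{2,3\}$: that $\mathfrak Y^+_{n,k}$ is closed and irreducible does not stop it from lying inside a larger irreducible closed subset of $\mathfrak X_{n,k}$. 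For $m\ge4$, the tangent computation at $\mathcal A_{k,a,b}$ only shows that each orbit $\operatorname{GL}_n(\mathbb C)\cdot\mathcal A_{k,a,b}$ is a component. It cannot rule out further components that contain no space of block form, so ``exactly two'' and ``exactly four'' remain unproved.

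The missing idea is a mechanism that forces every component through a model point. The paper uses that $\mathfrak X_{n,k}$ is closed in the Grassmannian, so each irreducible component is projective and $\operatorname{GL}_n(\mathbb C)$-stable. The Borel fixed-point theorem then gives a point fixed by the upper-triangular Borel subgroup. Omladi\v{c}--Radjavi--\v{S}ivic's analysis of such points (Theorem~\ref{thm:ORS-fixed}) identifies it, up to similarity and transposition, with some $\mathcal A_{k,a,b}$. One then needs a tangent-space upper bound at $\mathcal A_{k,a,b}$ for \emph{every} $m$, namely $\dim T_{\mathcal A_{k,a,b}}\mathfrak X_{n,k}\le k(n-k)+\dim\mathcal T_{a,b}$. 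This must be combined with $\dim\mathcal T_{a,b}=ab$ for $m\ge4$ and $\dim\mathcal T_{a,b}=\dim\mathfrak C_m\in\{2,6\}$ for $m\in\{2,3\}$. Lemma~\ref{lem:component-dimension-criterion} then makes $\mathcal O_{k,a,b}$, respectively $\mathfrak Y^+_{n,k}$, the unique component through $\mathcal A_{k,a,b}$.

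Note also that $\mathcal T_{a,b}$ is the solution space of the \emph{linearized} commuting equations, not the tangent space of the reduced variety $\mathfrak C_m$. That it equals the $ab$-dimensional orbit tangent for $m\ge4$ is not a known input; it is the explicit computation of Lemma~\ref{lem:RUV-equations}. Finally, your Step~2 is only a sketch. It can be completed cleanly with the invariant $\dim\sum_{S,T}\operatorname{im}[S,T]$, which equals $k$ on $\mathfrak Y^+_{n,k}$ and $n$ on $\mathfrak Y^-_{n,k}$.
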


The proof is organized as follows.
Section~\ref{sec:Xnk} shows that $\mathfrak X_{n,k}$ is closed in the
Grassmannian. We then apply the Borel fixed-point theorem together with
\cite[Proposition~11]{ORS} to show that, up to transposition, every
irreducible component of $\mathfrak X_{n,k}$ contains some
$\mathcal A_{k,a,b}$. Thus it is enough to determine the irreducible
component containing each $\mathcal A_{k,a,b}$.

Section~\ref{sec:Akab} describes the conjugacy orbit of
$\mathcal A_{k,a,b}$ and computes its tangent space. This gives the tangent
directions obtained by conjugating $\mathcal A_{k,a,b}$, which we compare
with all tangent directions in $\mathfrak X_{n,k}$.
Section~\ref{sec:tangent-vectors} studies the tangent space of
$\mathfrak X_{n,k}$ at $\mathcal A_{k,a,b}$ and reduces the remaining
calculation to a linear equation involving the commuting subspace
$\mathcal C_{a,b}$.

Section~\ref{sec:m-ge-4} solves this equation when $n-k\geq4$. In this case,
all tangent directions come from conjugation, so the component containing
$\mathcal A_{k,a,b}$ is its conjugacy orbit.
Section~\ref{sec:m-2-3} treats $n-k=2$ and $n-k=3$. In these cases, the
commuting subspace on the quotient can also vary, so the component containing
$\mathcal A_{k,a,b}$ is larger than its conjugacy orbit.

Section~\ref{sec:components} uses these results to determine the irreducible
components of $\mathfrak X_{n,k}$, proves that the listed components are
distinct, and gives their exact number. This proves
Theorem~\ref{thm:components-intro}. Finally, Section~\ref{sec:main-proof}
proves Theorem~\ref{thm:main}.

Throughout, all algebraic sets and varieties are understood in the classical
reduced sense over $\mathbb C$, and Zariski tangent spaces are defined from
their vanishing ideals in affine charts.

\section[The set X(n,k)]{\texorpdfstring{The set $\mathfrak X_{n,k}$}{The set X(n,k)}}\label{sec:Xnk}

In this section, assume $2\leq k\leq n-2$.

\subsection[Closedness of X(n,k)]{\texorpdfstring{Closedness of $\mathfrak X_{n,k}$}{Closedness of X(n,k)}}

\begin{proposition}\label{prop:X-closed}
Assume $2\leq k\leq n-2$. Then the subset
$\mathfrak X_{n,k}$ is closed in
$\operatorname{Gr}(d_{n,k},M_n(\mathbb C))$. In particular, it is
projective.
\end{proposition}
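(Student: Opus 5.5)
The plan is to show that $\mathfrak X_{n,k}$ is cut out locally on each standard affine chart of $\operatorname{Gr}(d,M_n(\mathbb C))$ by polynomial equations; here $d=d_{n,k}$. The Grassmannian is projective, so a closed subset of it is projective, and the second assertion follows from the first.

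Fix a complement decomposition $M_n(\mathbb C)=W_0\oplus W_1$ with $\dim W_0=d$. The corresponding chart $U_{W_1}$ consists of subspaces transverse to $W_1$. These are exactly the graphs
\[
\mathcal V_\phi=\{w+\phi(w):w\in W_0\},\qquad \phi\in\operatorname{Hom}(W_0,W_1)\cong\mathbb C^{d(n^2-d)}.
\]
Choose a basis $w_1,\dots,w_d$ of $W_0$ and put $S_i(\phi)=w_i+\phi(w_i)$. The entries of each $S_i(\phi)$ are affine-linear in the coordinates of $\phi$, and for every $\phi$ the matrices $S_1(\phi),\dots,S_d(\phi)$ form a basis of $\mathcal V_\phi$.

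The key step is to express the condition ``$\operatorname{rank}[S,T]\le k$ for all $S,T\in\mathcal V_\phi$'' polynomially. Write $S=\sum x_iS_i(\phi)$ and $T=\sum y_jS_j(\phi)$. Then
\[
[S,T]=\sum_{i,j}x_iy_j[S_i(\phi),S_j(\phi)].
\]
The condition says that every $(k+1)\times(k+1)$ minor of this matrix vanishes identically as a polynomial in $x,y\in\mathbb C^d$. Each such minor is a polynomial in $(x,y,\phi)$. Expand it in monomials $x^\alpha y^\beta$; the coefficients are polynomials $p_{\alpha\beta}(\phi)$. Over $\mathbb C$, a polynomial in $(x,y)$ vanishes identically precisely when all of its coefficients vanish. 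Hence
\[
\mathfrak X_{n,k}\cap U_{W_1}=\{\phi:\ p_{\alpha\beta}(\phi)=0 \text{ for all minors and all }\alpha,\beta\},
\]
which is Zariski closed in the chart. The charts cover the Grassmannian, and a subset that is closed in each member of an open cover is closed. This proves the proposition.

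One could instead argue with the incidence variety $\{(\mathcal V,S,T):S,T\in\mathcal V,\ \operatorname{rank}[S,T]\ge k+1\}$ and properness of projection, but the chart computation above is more direct.

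There is no real obstacle. The only point needing care is that the quantifier over all $S,T\in\mathcal V$ really becomes finitely many polynomial equations in $\phi$. This works because, inside a fixed chart, the parametrization of $\mathcal V_\phi$ by its basis $S_i(\phi)$ is uniform in $\phi$, and because the coefficient extraction is valid over the infinite field $\mathbb C$.

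The hypothesis $2\le k\le n-2$ plays no role in the argument. The same reasoning shows that the bounded-rank-commutator condition is closed in any Grassmannian of subspaces of $M_n(\mathbb C)$.
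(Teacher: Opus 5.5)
Your argument is correct, but it is not how the paper proceeds. The paper gives no argument of its own and simply invokes \cite[Lemma~7]{ORS}, specialized to subspaces of dimension $d_{n,k}$. You instead prove closedness from scratch. On each graph chart $\operatorname{Hom}(W_0,W_1)$ you parametrize $\mathcal V_\phi$ uniformly by the basis $S_i(\phi)$ and expand $[S,T]$ bilinearly in $(x,y)$. You then take as local equations the coefficients $p_{\alpha\beta}(\phi)$ of all $(k+1)\times(k+1)$ minors, which is legitimate because $\mathbb C$ is infinite. Closedness on each chart of an open cover gives global closedness, and projectivity is inherited from the Grassmannian.

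The citation is more economical. Your version is self-contained and makes visible that neither the range $2\leq k\leq n-2$ nor the specific dimension $d_{n,k}$ plays any role. It also fits the rest of the paper: with $W_0=\mathcal A_{k,a,b}$ and $W_1=\mathcal W_{k,a,b}$, your chart is exactly the graph chart $\Theta$ of \eqref{eq:graph-chart-map}. Your local equations are then built from the same commutator maps $\psi\mapsto[S+\psi(S),T+\psi(T)]$ that Lemma~\ref{lem:tangent-master} later differentiates.
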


\begin{proof}
This is \cite[Lemma~7]{ORS} with the fixed dimension equal to $d_{n,k}$.
\end{proof}

\subsection{Borel-fixed points}

We use the following precise consequence of the upper-triangular fixed-point
analysis in \cite[Proposition~11]{ORS}.

\begin{theorem}[Omladi\v{c}--Radjavi--\v{S}ivic]\label{thm:ORS-fixed}
Assume $2\leq k\leq n-2$. Suppose that
$\mathcal V\in\mathfrak X_{n,k}$ is invariant under conjugation
by every invertible upper-triangular matrix. Then, after a similarity and
possibly transposition, $\mathcal V$ equals $\mathcal A_{k,a,b}$ for some choice
$(a,b)$ in \eqref{eq:ab-choices}. In particular, the additional
maximal-dimensional commuting spaces in dimensions $2$ and $3$ do
not occur as such Borel-fixed representatives.
\end{theorem}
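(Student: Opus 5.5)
The plan is to reprove directly the fixed-point analysis attributed to \cite[Proposition~11]{ORS}, using only Theorem~\ref{thm:ORS-results} and Theorem~\ref{thm:commuting-classification}. Let $\mathcal B\subseteq\operatorname{GL}_n(\mathbb C)$ be the invertible upper-triangular matrices, and let $\mathcal V\in\mathfrak X_{n,k}$ be $\mathcal B$-stable.

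First I will make $\mathcal V$ combinatorial. Since $\mathcal V$ is stable under the diagonal torus, it is a sum of weight spaces. The weights of the conjugation action on $M_n(\mathbb C)$ are the roots $\varepsilon_i-\varepsilon_j$, each of multiplicity one for $i\neq j$, and the weight $0$ carried by the diagonal. Hence
\[
\mathcal V=\mathcal D\oplus\operatorname{span}\{E_{ij}:(i,j)\in\Phi\},
\]
where $\mathcal D$ is a subspace of the diagonal matrices and $\Phi$ is a set of off-diagonal positions. Stability under the unipotent part, equivalently under $\operatorname{ad}(E_{i,i+1})$, shows that $\Phi$ is an order ideal in the following sense. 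If $(i,j)\in\Phi$, then every position obtained by moving up or to the right also lies in $\Phi$, unless it is a diagonal position. If it is diagonal, then $E_{ii}-E_{jj}$ must lie in $\mathcal D$. In particular, $\mathcal V$ contains the "upper" part of a staircase shape, and the brackets $[\mathfrak n,\mathcal V]\subseteq\mathcal V$ give explicit constraints on $\mathcal D$.

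Second, I will use the dimension count. Since $\dim\mathcal V=d_{n,k}$, a large part of $\mathcal V$ must lie below the diagonal unless $\mathcal V$ is close to upper triangular. I will split into two cases according to whether $\Phi$ meets the strictly lower-triangular positions.

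\emph{Case 1: $\Phi$ is strictly upper triangular.} Then $\mathcal V$ lies in the upper-triangular algebra. The plan is to show that $\mathcal V$ is itself an algebra, or at least that the algebra $\mathcal W$ it generates still satisfies the rank condition. Then Theorem~\ref{thm:ORS-results} applies, with the algebra hypothesis, and identifies $\mathcal V$ up to similarity and transposition with \eqref{eq:main-block-space}. For monomial shapes the algebra property should reduce to closure of $\Phi$ under composition of positions, which can be checked together with the rank constraint on the commutators $[E_{ij},E_{jl}]=E_{il}$.

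\emph{Case 2: $\Phi$ contains a lower-triangular position.} By the order-ideal property, $\Phi$ then contains the whole rectangle up and to the right of that position, so it contains a block $M_{r\times r}$ sitting in the upper-left corner, together with the corresponding full rows. A commutator of two monomials in that block has rank at most $1$. Combining such commutators and using rank additivity over disjoint supports will force a maximal "full" upper-left corner of size exactly $k$. The cleanest way to get this is to restrict to generic diagonal plus nilpotent elements. On the complementary lower-right $m\times m$ block, the rank bound forces commutators to vanish. So that block is a $\mathcal B_m$-stable commuting subspace of dimension $s_m$.

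Third, I will conclude using Theorem~\ref{thm:commuting-classification}. Among the maximal commuting spaces listed there, only $\mathcal C_{a,b}$ is Borel-stable. The diagonal algebra is not stable under $\operatorname{ad}E_{12}$, the $\alpha,\beta,x$ family is not stable under $E_{23}$, and $\mathbb C[E_{12}+E_{23}]$ is not stable under the torus. This yields $\mathcal A_{k,a,b}$, or its transpose after reversing the order of coordinates.

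The main obstacle will be Case 2, specifically proving that the off-diagonal corner has size exactly $k$ with the full $B$-block, so that the quotient block is genuinely commuting. I would first try a rank computation with a single generic element $S\in\mathcal V$ and $T=E_{1n}$, or a lower-left extreme monomial, to produce commutators whose rank equals the corner size.
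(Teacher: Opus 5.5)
The paper does not prove this statement itself; it imports it from the upper-triangular fixed-point analysis of Omladi\v{c}, Radjavi, and \v{S}ivic (their Proposition~11). Your first step, which begins to redo that analysis, is sound. Torus-stability gives $\mathcal V=\mathcal D\oplus\operatorname{span}\{E_{ij}:(i,j)\in\Phi\}$. Stability under $\operatorname{ad}E_{i,i+1}$ gives the up/right closure of $\Phi$, with $E_{tt}-E_{t+1,t+1}\in\mathcal D$ whenever this closure reaches the diagonal. Beyond that, however, the proposal does not contain a proof.

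Case~1 rests on the unproved hope that $\mathcal V$, or the algebra it generates, is an algebra satisfying the rank bound. Stability under the upper-triangular Lie algebra $\mathfrak b$ does not force this, because $\mathcal D$ need not be closed under multiplication. For example, $\operatorname{span}\{\operatorname{diag}(1,2,3),E_{12},E_{23},E_{13}\}$ is $\mathfrak b$-stable but is not an algebra, and enlarging $\mathcal D$ can raise commutator ranks. In fact, Case~1 has to be shown to be \emph{empty}. The upper-triangular Borel subgroup has a unique fixed point on each $\operatorname{GL}_n/P$ with $P$ a standard parabolic. Hence the only Borel-fixed members of the orbits of $\mathcal A_{k,a,b}$ and $\mathcal A_{k,a,b}^{\mathsf T}$ are $\mathcal A_{k,a,b}$ and $w_0\mathcal A_{k,a,b}^{\mathsf T}w_0$, where $w_0$ is the order-reversing permutation matrix. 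Both contain a full $k\times k$ diagonal block, and therefore strictly lower-triangular positions, since $k\geq2$. So Case~1 needs an argument ruling it out, for instance a proof that a Borel-stable subspace of $\mathfrak b$ satisfying the rank condition has dimension less than $d_{n,k}$. You do not give one.

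Case~2 is the whole content of the theorem, and your sketch both misdescribes it and omits the decisive input. A lower position $(i,j)$ forces every off-diagonal position in rows $\leq i$ and columns $\geq j$, together with the traceless diagonal of the block on the indices $j,\dots,i$. This block need not lie in the upper-left corner. For $w_0\mathcal A_{k,a,b}^{\mathsf T}w_0$ it is the lower-right $k\times k$ block, and the forced rectangle consists of full columns. Your description therefore covers only one of the two models, unless you first reduce to one orientation via the anti-transpose $X\mapsto w_0X^{\mathsf T}w_0$, which preserves $\mathfrak b$.

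The block itself gives, through the rank condition, only the upper bound $i-j+1\leq k$. Several further facts must be extracted from $\dim\mathcal V=d_{n,k}$ together with additional rank computations: that there is a single such block, that it has size exactly $k$, that it sits at a corner ($j=1$ or $i=n$), and that the complementary block carries a commuting subspace of dimension exactly $s_m$. None of this is carried out. Counting alone does not favour the corners. A block on the indices $t+1,\dots,t+k$ with $0<t<m$ already forces $nk+t(m-t)-1$ dimensions, more than a corner block forces. Such placements must instead be excluded by a rank computation. Take $A,A'$ traceless on the block, $X$ supported in rows $1,\dots,t$ and the block's columns, and $Y$ supported in the block's rows and columns $t+k+1,\dots,n$. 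Then $S=A+X$ and $T=A'+Y$ give $\operatorname{rank}[S,T]=k+\operatorname{rank}\bigl(X(I-A'[A,A']^{-1}A)Y\bigr)$ whenever $[A,A']$ is invertible, and this exceeds $k$ for generic choices.

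The tools you name do not supply such steps. The commutator $[E_{12},E_{21}]=E_{11}-E_{22}$ has rank $2$, not at most $1$. Rank is additive for summands with disjoint row \emph{and} column supports, not merely disjoint entry supports. As it stands, the proposal sets up a reasonable framework but leaves the statement unproved. The missing analysis of Borel-stable shapes is exactly what the paper imports from Omladi\v{c}--Radjavi--\v{S}ivic.

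Finally, in your third step, testing the normal forms of Theorem~\ref{thm:commuting-classification} against particular elements does not show that no \emph{conjugate} of them is $\mathcal B_m$-stable. Compare instead their normalizer dimensions, namely $m$, $4$, and $5$, with $\dim\mathcal B_m$.
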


\begin{proposition}\label{prop:component-Akab}
Assume $2\leq k\leq n-2$. Let $Z$ be an irreducible
component of $\mathfrak X_{n,k}$, and set
\[
Z^{\mathsf T}
=
\{\mathcal V^{\mathsf T}:\mathcal V\in Z\}.
\]
Then $Z$ is stable under conjugation by $\operatorname{GL}_n(\mathbb C)$.
Moreover, either $Z$ or $Z^{\mathsf T}$ contains a point in the conjugacy
orbit of $\mathcal A_{k,a,b}$ for some choice $(a,b)$ in
\eqref{eq:ab-choices}.
\end{proposition}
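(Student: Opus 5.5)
The plan has two parts: first show that $Z$ is $\operatorname{GL}_n(\mathbb C)$-stable, then apply the Borel fixed-point theorem to $Z$ and use Theorem~\ref{thm:ORS-fixed}.

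\emph{Stability.} The group $G=\operatorname{GL}_n(\mathbb C)$ acts algebraically on $\operatorname{Gr}(d_{n,k},M_n(\mathbb C))$ by $\mathcal V\mapsto P^{-1}\mathcal VP$. This action preserves $\mathfrak X_{n,k}$, because
\[
[P^{-1}SP,P^{-1}TP]=P^{-1}[S,T]P
\]
has the same rank as $[S,T]$. So each $g\in G$ permutes the finitely many irreducible components of $\mathfrak X_{n,k}$, since it acts by an automorphism of this closed set (Proposition~\ref{prop:X-closed}).

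Now fix $Z$. The set $\{g\in G: g\cdot Z=Z\}$ is a finite union of cosets of a closed subgroup, so it has finite index in $G$. More concretely, consider the action morphism $G\times Z\to\mathfrak X_{n,k}$. Its image $G\cdot Z$ is irreducible because $G$ is connected, and it contains $Z$. Its closure is an irreducible closed subset of $\mathfrak X_{n,k}$ containing the component $Z$, so by maximality it equals $Z$. Hence $g\cdot Z\subseteq Z$ for all $g$, and $Z$ is $G$-stable. The same argument applies to $Z^{\mathsf T}$. Transposition is an automorphism of the Grassmannian preserving $\mathfrak X_{n,k}$, since
\[
[S^{\mathsf T},T^{\mathsf T}]=-[S,T]^{\mathsf T},
\]
so $Z^{\mathsf T}$ is also an irreducible component.

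\emph{Fixed point.} Let $B\subseteq G$ be the Borel subgroup of invertible upper-triangular matrices. $Z$ is closed in the projective variety $\mathfrak X_{n,k}$, so it is projective, and it is irreducible, hence connected. Since $Z$ is nonempty and $B$-stable by the first part, the Borel fixed-point theorem gives some $\mathcal V\in Z$ fixed by $B$. By Theorem~\ref{thm:ORS-fixed}, there are $P\in G$ and a choice $(a,b)$ in \eqref{eq:ab-choices} such that either $P^{-1}\mathcal VP=\mathcal A_{k,a,b}$ or $P^{-1}\mathcal V^{\mathsf T}P=\mathcal A_{k,a,b}$.

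In the first case $\mathcal V$ lies in the orbit $G\cdot\mathcal A_{k,a,b}$, and $\mathcal V\in Z$. In the second case $\mathcal V^{\mathsf T}\in Z^{\mathsf T}$ lies in that orbit. This gives the claim.

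\emph{Main obstacle.} The delicate point is the stability step, namely that $G$ maps the component $Z$ to itself rather than to another component. This uses connectedness of $G$: the family $\{g\cdot Z\}_{g\in G}$ is connected, and it takes values in a finite set of components. The irreducibility of $\overline{G\cdot Z}$ argument handles this cleanly. Everything else is a direct invocation of Borel's theorem and Theorem~\ref{thm:ORS-fixed}.
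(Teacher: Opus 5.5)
Your proposal is correct and follows essentially the same route as the paper. Transposition is an automorphism of $\mathfrak X_{n,k}$, so $Z^{\mathsf T}$ is again a component; the closure of $\operatorname{GL}_n(\mathbb C)\cdot Z$ is irreducible and contains $Z$, so it equals $Z$; and the Borel fixed-point theorem applied to the $B$-stable projective set $Z$, combined with Theorem~\ref{thm:ORS-fixed}, yields the required point in $Z$ or $Z^{\mathsf T}$. Your aside about the stabilizer being a finite union of cosets is loosely phrased but unnecessary, since the irreducibility argument already does the work.
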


\begin{proof}
Transposition is an automorphism of $\mathfrak X_{n,k}$: it is induced by a
linear automorphism of $M_n(\mathbb C)$, and
$[S^{\mathsf T},T^{\mathsf T}]=-[S,T]^{\mathsf T}$. Hence
$Z^{\mathsf T}$ is again an irreducible component.

By Proposition~\ref{prop:X-closed}, $Z$ is projective. Since
$\operatorname{GL}_n(\mathbb C)$ is irreducible and acts on
$\mathfrak X_{n,k}$ by conjugation, the closure of
$\operatorname{GL}_n(\mathbb C)\cdot Z$ is irreducible and contains $Z$.
By maximality of $Z$, it equals $Z$. Thus $Z$ is
$\operatorname{GL}_n(\mathbb C)$-stable, hence $B$-stable for the
upper-triangular Borel subgroup $B\subseteq\operatorname{GL}_n(\mathbb C)$.

The Borel fixed-point theorem \cite[Theorem~10.4]{Borel} therefore gives a
$B$-fixed point $\mathcal V_0\in Z$. Theorem~\ref{thm:ORS-fixed} shows that
either $\mathcal V_0$ or $\mathcal V_0^{\mathsf T}$ lies in the conjugacy
orbit of some $\mathcal A_{k,a,b}$. Hence either $Z$ or $Z^{\mathsf T}$
contains such a point.
\end{proof}

Proposition~\ref{prop:component-Akab} reduces the classification of
$\mathfrak X_{n,k}$ to identifying the irreducible component through each
$\mathcal A_{k,a,b}$. We now study these spaces and their conjugacy orbits.

\section[The spaces A(k,a,b)]{\texorpdfstring{The spaces $\mathcal A_{k,a,b}$}{The spaces A(k,a,b)}}\label{sec:Akab}

In this section, assume $2\leq k\leq n-2$ and fix one of the
choices $(a,b)$ in \eqref{eq:ab-choices}.
For all block computations, choose a decomposition
\[
\mathbb C^n=\mathcal E\oplus\mathcal F\oplus\mathcal G,
\qquad
\dim\mathcal E=k,
\quad
\dim\mathcal F=a,
\quad
\dim\mathcal G=b.
\]
Relative to this decomposition,
\[
\mathcal A_{k,a,b}
=
\left\{
\begin{pmatrix}
A&B&C\\
0&\lambda I_a&D\\
0&0&\lambda I_b
\end{pmatrix}:
\begin{array}{l}
A\in M_k(\mathbb C),\\
B\in M_{k\times a}(\mathbb C),\\
C\in M_{k\times b}(\mathbb C),\\
D\in M_{a\times b}(\mathbb C),\\
\lambda\in\mathbb C
\end{array}
\right\}.
\]
We first recover the intrinsic flag determined by $\mathcal A_{k,a,b}$ and then
compute its conjugacy orbit and orbit tangent space.

\subsection[The conjugacy orbit of A(k,a,b)]{\texorpdfstring{The conjugacy orbit of $\mathcal A_{k,a,b}$}{The conjugacy orbit of A(k,a,b)}}

Let
\[
\mathcal O_{k,a,b}
=
\operatorname{GL}_n(\mathbb C)\cdot\mathcal A_{k,a,b}\subseteq \mathfrak X_{n,k},
\qquad
\mathcal O_{k,a,b}^{\mathsf T}
=
\operatorname{GL}_n(\mathbb C)\cdot\mathcal A_{k,a,b}^{\mathsf T}.
\]

\begin{proposition}\label{prop:orbit-Akab}
Assume $2\leq k\leq n-2$ and let $(a,b)$ be one of the
choices in \eqref{eq:ab-choices}. Then
\[
\mathcal O_{k,a,b}
\cong
\operatorname{Fl}(k,k+a;n),
\qquad
\dim\mathcal O_{k,a,b}=k(n-k)+ab.
\]
In particular,
$\mathcal O_{k,a,b}$ is a closed, smooth, and irreducible subvariety of
$\mathfrak X_{n,k}$.
\end{proposition}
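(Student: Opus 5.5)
The plan is to show that the stabilizer of $\mathcal A_{k,a,b}$ under conjugation is exactly the parabolic subgroup $P$ stabilizing the partial flag $\mathcal E\subset\mathcal E\oplus\mathcal F$. Then the orbit map $\operatorname{GL}_n(\mathbb C)/P\to\mathfrak X_{n,k}$ gives a bijective morphism $\operatorname{Fl}(k,k+a;n)\to\mathcal O_{k,a,b}$. I will upgrade it to an isomorphism by writing down an explicit inverse morphism.

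\textbf{Step 1: recover the flag intrinsically from $\mathcal V=\mathcal A_{k,a,b}$.} I will show that:
\begin{itemize}
\item $\mathcal E=\bigcap_{T\in[\mathcal V,\mathcal V]}\ker$ fails, so instead I characterize $\mathcal E$ and $\mathcal E\oplus\mathcal F$ as follows. Consider the ideal-like subspace $\mathcal N=\{T\in\mathcal V: T\ \text{nilpotent on the quotient}\}=\{\lambda=0\}$, a hyperplane of $\mathcal V$.
\item $\mathcal E=\sum_{T\in\mathcal N}T(\mathbb C^n)$. Indeed, the images of elements with $\lambda=0$ have the form $\begin{pmatrix}A&B&C\\0&0&D\\0&0&0\end{pmatrix}x$. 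Their span is $\mathcal E\oplus\mathcal F$, not $\mathcal E$, so I must be careful. The correct statements are that $\mathcal E\oplus\mathcal F=\sum_{T\in\mathcal N}T(\mathbb C^n)$ and $\mathcal E=\bigcap_{T\in\mathcal N}\ker$ fails as well. So I use common invariance instead: $\mathcal E$ is the unique $k$-dimensional subspace invariant under all of $\mathcal V$. This holds because $\mathcal V$ contains all $A\in M_k$ acting on $\mathcal E$ together with $B,C$ mixing, and any invariant subspace $W$ must either lie in $\mathcal E$ or contain $\mathcal E$; containing $\mathcal E$ plus more forces dimension $>k$.
\end{itemize}
Then $\mathcal E\oplus\mathcal F=\mathcal E+\sum_{T\in\mathcal N}T(\mathbb C^n)$ recovers $H$. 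Both descriptions are conjugation-equivariant, so if $g\mathcal Vg^{-1}=\mathcal V$ then $g$ preserves the flag $(\mathcal E,H)$, and hence $g\in P$. Conversely, a direct block computation shows that $P$ (block upper triangular with blocks $k,a,b$) normalizes $\mathcal V$. This uses that conjugating $\begin{pmatrix}\lambda I_a&D\\0&\lambda I_b\end{pmatrix}$ by a block upper triangular matrix in $\operatorname{GL}_a\times\operatorname{GL}_b$ with an off-diagonal part preserves the form.

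\textbf{Step 2: morphism structure.} The map $\Phi:\operatorname{Fl}(k,k+a;n)\to\operatorname{Gr}(d_{n,k},M_n)$, $(U,H)\mapsto\{T: TU\subseteq U,\ TH\subseteq H,\ \overline T|_{H/U}$ and $\overline T|_{\mathbb C^n/H}$ are the same scalar, and $T(\mathbb C^n)\subseteq H$ modulo that scalar$\}$, is well defined. In local charts it is given by conjugating $\mathcal A_{k,a,b}$ by unipotent matrices, so it is a morphism and $\mathrm{GL}_n$-equivariant. By Step 1 it is injective. The intrinsic recovery of $U$ and $H$ from $\mathcal V$ defines the inverse on the orbit. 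Expressed via linear algebra (common invariant subspace, span of images of the trace-like hyperplane), it is algebraic on charts, so $\Phi$ is an isomorphism onto its image. Since $\operatorname{Fl}$ is projective, the image is closed. It is smooth and irreducible, and $\dim=k(n-k)+ab$ by the standard flag dimension count: $\dim\operatorname{Gr}(k,n)+\dim\operatorname{Gr}(a,n-k)$.

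\textbf{Main obstacle.} The delicate point is the uniqueness of the $k$-dimensional invariant subspace and, more precisely, making the inverse map algebraic rather than merely set-theoretic. A cleaner alternative is to note that, in characteristic zero, a bijective equivariant morphism between homogeneous spaces whose stabilizers coincide scheme-theoretically is an isomorphism. So I would instead check that the Lie algebra stabilizer $\{X:[X,\mathcal V]\subseteq\mathcal V\}$ equals $\operatorname{Lie}P$. This is a direct block calculation: the $(2,1)$, $(3,1)$, $(3,2)$ blocks of $X$ must vanish, using $k\ge2$ and $a,b\ge1$. It gives a smooth orbit map and hence the isomorphism.
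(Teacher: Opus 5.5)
Your argument is correct and follows the paper's route. You show that the conjugation stabilizer of $\mathcal A_{k,a,b}$ is exactly the parabolic $P$ of the flag $\mathcal E\subset\mathcal E\oplus\mathcal F$, and conclude that $\mathcal O_{k,a,b}\cong\operatorname{GL}_n/P\cong\operatorname{Fl}(k,k+a;n)$, which is closed as the image of a projective variety.

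The differences are minor. You recover the flag as the unique $k$-dimensional $\mathcal A_{k,a,b}$-invariant subspace together with the span of the images of the hyperplane $\{\lambda=0\}$; the paper instead uses $\sum_{R\in J^2}\operatorname{im}R$ and $\bigcap_{R\in J^2}\ker R$ for the Jacobson radical $J$. Your Lie-algebra stabilizer check makes explicit the separability that the paper takes from the characteristic-zero orbit--stabilizer theorem. That route is sufficient on its own, so you can drop the vaguer ``explicit inverse'' version of Step~2 and the false starts recorded in Step~1.
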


\begin{proof}
The space $\mathcal A_{k,a,b}$ is a subalgebra of
$M_n(\mathbb C)$. Let
\[
J=
\left\{
\begin{pmatrix}
0&B&C\\
0&0&D\\
0&0&0
\end{pmatrix}
\right\}.
\]
The subspace $J$ is a nilpotent two-sided ideal, and
\[
\mathcal A_{k,a,b}/J
\cong
M_k(\mathbb C)\oplus\mathbb C
\]
is semisimple. Hence
\[
J=\operatorname{rad}(\mathcal A_{k,a,b}).
\]
A direct multiplication, using $a\geq1$, gives
\[
J^2=
\left\{
\begin{pmatrix}
0&0&C\\
0&0&0\\
0&0&0
\end{pmatrix}:
C\in M_{k\times b}(\mathbb C)
\right\}.
\]
Thus the characteristic ideal $J^2$ recovers the flag:
\begin{equation}\label{eq:intrinsic-flag-general}
\sum_{R\in J^2}\operatorname{im}R=\mathcal E,
\qquad
\bigcap_{R\in J^2}\ker R=\mathcal E\oplus\mathcal F.
\end{equation}

Let $P\subseteq\operatorname{GL}_n$ be the parabolic subgroup stabilizing
$\mathcal E\subset\mathcal E\oplus\mathcal F$. If
$g\in\operatorname{GL}_n(\mathbb C)$ normalizes $\mathcal A_{k,a,b}$, then
conjugation by $g$ is an algebra automorphism of $\mathcal A_{k,a,b}$. The
Jacobson radical is characteristic under algebra automorphisms, so $g$
preserves $J^2$ and hence the flag in
\eqref{eq:intrinsic-flag-general}. Thus $g\in P(\mathbb C)$.

Conversely, the defining conditions in \eqref{eq:A-kab} can be written
intrinsically as
\[
\mathcal A_{k,a,b}
=
\left\{
S\in\operatorname{End}(\mathbb C^n):
\begin{array}{l}
(\mathcal E,\mathcal E\oplus\mathcal F)
\in\operatorname{Fl}(k,k+a;n),\\
S(\mathcal E)\subseteq\mathcal E,\\
S(\mathcal E\oplus\mathcal F)\subseteq\mathcal E\oplus\mathcal F,\\
\left.S\right|_{(\mathcal E\oplus\mathcal F)/\mathcal E}
=\lambda I_{(\mathcal E\oplus\mathcal F)/\mathcal E},\\
\left.S\right|_{\mathbb C^n/(\mathcal E\oplus\mathcal F)}
=\lambda I_{\mathbb C^n/(\mathcal E\oplus\mathcal F)}
\text{ for some }\lambda\in\mathbb C
\end{array}
\right\}.
\]
These conditions depend only on the flag, so every element of $P(\mathbb C)$
normalizes $\mathcal A_{k,a,b}$. Hence $P(\mathbb C)$ is exactly the
stabilizer of $\mathcal A_{k,a,b}$ under conjugation.

By the orbit--stabilizer theorem for algebraic group actions, the orbit map
induces an isomorphism
\[
\mathcal O_{k,a,b}
\cong
\operatorname{GL}_n/P
\cong
\operatorname{Fl}(k,k+a;n).
\]
The partial flag variety is projective, smooth, and irreducible. The induced
morphism to the Grassmannian is therefore proper, so its image
$\mathcal O_{k,a,b}$ is closed; smoothness and irreducibility follow from the
isomorphism above. Finally, $P$ consists of the block upper-triangular matrices
of block sizes $k,a,b$, and hence
\[
\dim\mathcal O_{k,a,b}=n^2-\dim P=n^2-\bigl(k^2+a^2+b^2+ka+kb+ab\bigr)=k(n-k)+ab.
\]
\end{proof}

\subsection{The tangent space to the conjugacy orbit}

Define
\[
\mathcal W_{k,a,b}
=
\left\{
\begin{pmatrix}
0&0&0\\
P&U&0\\
Q&R&V
\end{pmatrix}:
\begin{array}{l}
P\in M_{a\times k}(\mathbb C),\\
Q\in M_{b\times k}(\mathbb C),\\
R\in M_{b\times a}(\mathbb C),\\
U\in M_a(\mathbb C),\\
V\in M_b(\mathbb C),\\
\operatorname{tr}U+\operatorname{tr}V=0
\end{array}
\right\}.
\]
Then
\begin{equation}\label{eq:A-W-direct-sum}
M_n(\mathbb C)
=
\mathcal A_{k,a,b}\oplus\mathcal W_{k,a,b}.
\end{equation}

Let $\pi_{\mathcal W}:M_n(\mathbb C)\to\mathcal W_{k,a,b}$ denote the
projection associated with \eqref{eq:A-W-direct-sum}. Let
\[
\mathcal U_{\mathcal W}
=
\left\{
\mathcal V\in\operatorname{Gr}(d_{n,k},M_n(\mathbb C)):
M_n(\mathbb C)=\mathcal V\oplus\mathcal W_{k,a,b}
\right\}.
\]
The standard graph map
\begin{equation}\label{eq:graph-chart-map}
\Theta:
\operatorname{Hom}(\mathcal A_{k,a,b},\mathcal W_{k,a,b})
\longrightarrow
\mathcal U_{\mathcal W},
\qquad
\psi\longmapsto
\Gamma_\psi
=
\{S+\psi(S):S\in\mathcal A_{k,a,b}\},
\end{equation}
is an affine-chart isomorphism with
$\Theta(0)=\mathcal A_{k,a,b}$. In particular, it gives the identification
\begin{equation}\label{eq:Grassmann-tangent-chart}
T_{\mathcal A_{k,a,b}}
\operatorname{Gr}(d_{n,k},M_n(\mathbb C))
\cong
\operatorname{Hom}(\mathcal A_{k,a,b},\mathcal W_{k,a,b}).
\end{equation}
For
\[
L\in M_{a\times k}(\mathbb C),
\qquad
M\in M_{b\times k}(\mathbb C),
\qquad
N\in M_{b\times a}(\mathbb C),
\]
set
\[
X_{L,M,N}
=
\begin{pmatrix}
0&0&0\\
L&0&0\\
M&N&0
\end{pmatrix},
\]
and define the linear map
\[
\delta_{L,M,N}:
\mathcal A_{k,a,b}\longrightarrow\mathcal W_{k,a,b},
\qquad
S\longmapsto
\pi_{\mathcal W}([X_{L,M,N},S]).
\]

\begin{proposition}\label{prop:orbit-Akab-tangent}
Assume $2\leq k\leq n-2$ and let $(a,b)$ be one of the
choices in \eqref{eq:ab-choices}. Under
\eqref{eq:Grassmann-tangent-chart},
\begin{equation}\label{eq:orbit-tangent}
T_{\mathcal A_{k,a,b}}\mathcal O_{k,a,b}
=
\left\{
\delta_{L,M,N}:
L\in M_{a\times k}(\mathbb C),\ 
M\in M_{b\times k}(\mathbb C),\ 
N\in M_{b\times a}(\mathbb C)
\right\}.
\end{equation}
\end{proposition}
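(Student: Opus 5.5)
We compute the differential of the orbit map at the identity and identify its image; a dimension count then finishes the argument. Consider the orbit map $\phi:\operatorname{GL}_n(\mathbb C)\to\mathcal O_{k,a,b}$, $g\mapsto g\mathcal A_{k,a,b}g^{-1}$. By Proposition~\ref{prop:orbit-Akab}, $\mathcal O_{k,a,b}$ is smooth of dimension $k(n-k)+ab$. The orbit map is a smooth surjection onto a homogeneous space, so $d\phi_I:\mathfrak{gl}_n\to T_{\mathcal A_{k,a,b}}\mathcal O_{k,a,b}$ is surjective. It therefore suffices to compute $d\phi_I(X)$ in the graph chart \eqref{eq:graph-chart-map}, and then check that the directions $X=X_{L,M,N}$ already give everything.

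\emph{Computing the differential.} Take $X\in M_n(\mathbb C)$ and the curve $g_t=I+tX$. For $t$ small, $g_t\mathcal A_{k,a,b}g_t^{-1}$ lies in $\mathcal U_{\mathcal W}$. Its elements have the form $S+t[X,S]+O(t^2)$ with $S\in\mathcal A_{k,a,b}$. Decompose $S+t[X,S]$ along \eqref{eq:A-W-direct-sum}: the $\mathcal A$-component is $S+t\pi_{\mathcal A}[X,S]$, a first-order perturbation of the identity on $\mathcal A_{k,a,b}$. Reparametrizing the graph then gives
\[
\psi_t(S)=t\,\pi_{\mathcal W}([X,S])+O(t^2).
\]
Hence $d\phi_I(X)$ is the map $S\mapsto\pi_{\mathcal W}([X,S])$, and in particular $\delta_{L,M,N}=d\phi_I(X_{L,M,N})$ lies in the orbit tangent space. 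This gives the inclusion $\supseteq$ in \eqref{eq:orbit-tangent}.

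\emph{Reducing to the $X_{L,M,N}$.} The stabilizer $P$ of $\mathcal A_{k,a,b}$ has Lie algebra $\mathfrak p$, the block upper-triangular matrices, and $d\phi_I$ kills $\mathfrak p$: for $X\in\mathfrak p$ we have $[X,S]\in\mathcal A_{k,a,b}$, since $\mathfrak p$ normalizes $\mathcal A_{k,a,b}$ by the intrinsic description. Because $\mathfrak{gl}_n=\mathfrak p\oplus\{X_{L,M,N}\}$, the image of $d\phi_I$ equals the span of the $\delta_{L,M,N}$. This gives equality in \eqref{eq:orbit-tangent}.

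\emph{Injectivity check.} As a consistency check, the kernel of $d\phi_I$ is exactly $\mathfrak p$, so $(L,M,N)\mapsto\delta_{L,M,N}$ is injective and the dimension is $ka+kb+ab=k(n-k)+ab$, matching Proposition~\ref{prop:orbit-Akab}. Injectivity can also be seen directly. Take $S=E_{ij}$ in the $A$-block: the $(\mathcal F,\mathcal E)$ and $(\mathcal G,\mathcal E)$ blocks of $[X,S]$ are $LE_{ij}$ and $ME_{ij}$, and these lie in $\mathcal W$. Hence $\delta=0$ forces $L=M=0$. Then take $S$ in the $D$-block: the $(\mathcal G,\mathcal G)$ block of $[X,S]$ is $ND$, and the $(\mathcal F,\mathcal F)$ block is $-DN$. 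Their projection to $\mathcal W$ vanishes only if $ND$ and $-DN$ are equal scalar multiples of the identity in the respective blocks. Since $a,b\geq1$ and $D$ is arbitrary, this forces $N=0$.

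\emph{Main obstacle.} The delicate point is justifying that the first-order term of the chart coordinate is exactly $\pi_{\mathcal W}[X,\cdot]$, that is, correctly handling the trace-zero normalization in $\mathcal W_{k,a,b}$. Here $\pi_{\mathcal W}$ moves the scalar part $\lambda$ into $\mathcal A$. The argument above handles this because the projection is linear and the correction is absorbed into the $O(t^2)$ reparametrization.
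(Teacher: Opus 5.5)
Your proof is correct and follows the paper's argument: you compute the differential of the orbit map in the graph chart as $S\mapsto\pi_{\mathcal W}([X,S])$, note that it kills the block upper-triangular part $\mathfrak p$ (the Lie algebra of the stabilizer $P$), and conclude that its image is exactly the set of $\delta_{L,M,N}$. The only difference is the closing step. The paper concludes from injectivity of $(L,M,N)\mapsto\delta_{L,M,N}$ and the count $ak+bk+ab=\dim\mathcal O_{k,a,b}$, whereas you use surjectivity of the differential of $\operatorname{GL}_n\to\operatorname{GL}_n/P\cong\mathcal O_{k,a,b}$ (valid in characteristic $0$) and keep the dimension count only as a consistency check.
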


\begin{proof}
Consider the orbit map
\[
\rho:
\operatorname{GL}_n(\mathbb C)
\longrightarrow
\operatorname{Gr}(d_{n,k},M_n(\mathbb C)),
\qquad
g\longmapsto g\mathcal A_{k,a,b}g^{-1}.
\]
For
\[
X=
\begin{pmatrix}
X_{11}&X_{12}&X_{13}\\
X_{21}&X_{22}&X_{23}\\
X_{31}&X_{32}&X_{33}
\end{pmatrix}
\in M_n(\mathbb C),
\]
write
\[
X=Y+X_{L,M,N},
\]
where
\[
Y=
\begin{pmatrix}
X_{11}&X_{12}&X_{13}\\
0&X_{22}&X_{23}\\
0&0&X_{33}
\end{pmatrix},
\qquad
L=X_{21},\quad
M=X_{31},\quad
N=X_{32}.
\]
By Proposition~\ref{prop:orbit-Akab}, $Y$ lies in the Lie algebra of
$N_{\operatorname{GL}_n}(\mathcal A_{k,a,b})$. Since $\rho$ is constant on
this normalizer,
\[
(d\rho)_{I_n}(Y)=0.
\]
Since $\rho(I_n)=\mathcal A_{k,a,b}\in\mathcal U_{\mathcal W}$, the map
$\Theta^{-1}\circ\rho$ is defined on a neighborhood of $I_n$. For
$S\in\mathcal A_{k,a,b}$, the expansion
\[
(I_n+tX)S(I_n+tX)^{-1}
=
S+t[X,S]+O(t^2)
\]
shows that, in the graph coordinates \eqref{eq:graph-chart-map},
\begin{equation}\label{eq:orbit-map-differential}
d(\Theta^{-1}\circ\rho)_{I_n}(X)(S)
=
\pi_{\mathcal W}([X,S]).
\end{equation}
Thus, under the tangent-space identification
\eqref{eq:Grassmann-tangent-chart}, the differential $(d\rho)_{I_n}(X)$ is
represented by the map on the right-hand side of
\eqref{eq:orbit-map-differential}. Since $X=Y+X_{L,M,N}$ and
$(d\rho)_{I_n}(Y)=0$, linearity gives
\[
(d\rho)_{I_n}(X)(S)
=
\pi_{\mathcal W}([X_{L,M,N},S])
=
\delta_{L,M,N}(S).
\]
Therefore
\[
\operatorname{Im}(d\rho)_{I_n}
=
\left\{
\delta_{L,M,N}:
L\in M_{a\times k}(\mathbb C),\
M\in M_{b\times k}(\mathbb C),\
N\in M_{b\times a}(\mathbb C)
\right\}.
\]
The parametrization $(L,M,N)\mapsto\delta_{L,M,N}$ is injective by a direct
block computation. Hence, by Proposition~\ref{prop:orbit-Akab},
\[
\dim\operatorname{Im}(d\rho)_{I_n}
=
ak+bk+ab
=
k(n-k)+ab
=
\dim T_{\mathcal A_{k,a,b}}\mathcal O_{k,a,b}.
\]
Since
\[
\operatorname{Im}(d\rho)_{I_n}
\subseteq
T_{\mathcal A_{k,a,b}}\mathcal O_{k,a,b},
\]
equality holds, proving \eqref{eq:orbit-tangent}.
\end{proof}

\section[Tangent vectors at A(k,a,b)]{\texorpdfstring{Tangent vectors at $\mathcal A_{k,a,b}$}{Tangent vectors at A(k,a,b)}}\label{sec:tangent-vectors}

In this section, assume $2\leq k\leq n-2$ and fix one of the
choices $(a,b)$ in \eqref{eq:ab-choices}. We shall use the matrices
\begin{equation}\label{eq:AB-star}
A_\ast=\sum_{i=1}^{k-1}E_{i,i+1},\quad
B_\ast=\sum_{i=1}^{k-1}iE_{i+1,i},\quad
H_\ast=[A_\ast,B_\ast]
=\operatorname{diag}(1,\ldots,1,-(k-1)).
\end{equation}
Thus $H_\ast$ is invertible and $B_\ast$ is nilpotent. We also recall the
standard tangent-space formula for matrices of bounded rank.

\begin{lemma}\label{lem:determinantal-tangent}
Assume $2\leq k\leq n-2$. Let
\[
\mathcal D_k=
\{C\in M_n(\mathbb C):\operatorname{rank}C\leq k\}.
\]
If $C_0\in\mathcal D_k$ has rank exactly $k$, then
\[
T_{C_0}\mathcal D_k
=
\{C_1\in M_n(\mathbb C):
C_1(\ker C_0)\subseteq\operatorname{im}C_0\}.
\]
\end{lemma}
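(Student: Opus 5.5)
We prove the two inclusions separately. Throughout, the tangent space is taken with respect to the reduced structure on $\mathcal D_k$, whose ideal is generated by the $(k+1)\times(k+1)$ minors. The only non-formal input is that these minors generate a radical ideal.

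\emph{Upper bound.} Choose $P,Q\in\operatorname{GL}_n(\mathbb C)$ with $PC_0Q=\begin{pmatrix}I_k&0\\0&0\end{pmatrix}$. The map $C\mapsto PCQ$ is a linear automorphism of $M_n(\mathbb C)$ preserving $\mathcal D_k$. It carries $\ker C_0$ to $Q^{-1}\ker C_0$ and $\operatorname{im}C_0$ to $P\operatorname{im}C_0$. Both sides of the claimed equality therefore transform compatibly, and we may assume $C_0$ has this normal form. Then $\ker C_0=\operatorname{span}(e_{k+1},\dots,e_n)$ and $\operatorname{im}C_0=\operatorname{span}(e_1,\dots,e_k)$. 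The condition $C_1(\ker C_0)\subseteq\operatorname{im}C_0$ says exactly that the lower-right $(n-k)\times(n-k)$ block of $C_1$ vanishes.

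Each $(k+1)$-minor $f$ lies in the vanishing ideal, so every tangent vector satisfies $df_{C_0}(C_1)=0$. Take the minor with rows $\{1,\dots,k,i\}$ and columns $\{1,\dots,k,j\}$, where $i,j>k$. Expanding $\det$ of $C_0+tC_1$ on these rows and columns to first order in $t$ gives $t\,(C_1)_{ij}+O(t^2)$, because the identity block contributes the cofactor $1$. Hence every tangent vector has vanishing lower-right block, which proves $T_{C_0}\mathcal D_k\subseteq\{C_1: C_1(\ker C_0)\subseteq\operatorname{im}C_0\}$.

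\emph{Lower bound.} Conversely, we exhibit curves in $\mathcal D_k$ through $C_0$ realizing every vector with zero lower-right block. For $C_1=\begin{pmatrix}X&Y\\Z&0\end{pmatrix}$, consider
\[
\gamma(t)=\begin{pmatrix}I_k\\tZ\end{pmatrix}(I_k+tX)\begin{pmatrix}I_k&tY\end{pmatrix}.
\]
Each $\gamma(t)$ has rank at most $k$, and $\gamma'(0)=C_1$. The reduced tangent space contains the velocity of every curve in the variety. Alternatively, the open set $\{\det(\text{upper-left }k\times k\text{ block})\neq0\}\cap\mathcal D_k$ is the smooth graph $\{D=CA^{-1}B\}$ of dimension $n^2-(n-k)^2$. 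That graph has exactly this tangent space, and $C_0$ is a smooth point whose tangent spaces agree for the reduced structure. This gives equality.

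\emph{Main obstacle.} The upper bound above uses the minors, so it is valid only if they generate the full vanishing ideal; this is the subtle point. One option is to cite the classical theorem that determinantal ideals are prime (Eagon--Hochster). A cleaner option avoids it: the Zariski tangent space of the reduced variety at a point contained in a smooth open chart equals the tangent space of that chart. The graph description shows $\mathcal D_k$ is smooth of dimension $2nk-k^2$ at $C_0$. Since the minors already cut the tangent space down to exactly this dimension, and the true tangent space is contained in it, equality holds with no need for the radicality theorem.
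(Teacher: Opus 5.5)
Your proof is correct. It differs from the paper mainly in that the paper gives no argument at all: it cites the standard formula for determinantal varieties (Harris, Example~14.16). You verify the formula directly in three steps:
\begin{enumerate}
\item You reduce to $C_0=\left(\begin{smallmatrix}I_k&0\\0&0\end{smallmatrix}\right)$ by $C\mapsto PCQ$.
\item You bound $T_{C_0}\mathcal D_k$ above using the differentials of the $(k+1)$-minors that border the identity block.
\item You bound it below with the explicit polynomial curve $\gamma(t)$, or alternatively with the Schur-complement graph chart $D=CA^{-1}B$.
\end{enumerate}
The citation is shorter. Your version is self-contained and uses only the reduced vanishing-ideal definition of tangent space that the paper adopts.

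One correction concerns your final paragraph. The upper bound does \emph{not} require the minors to generate the vanishing ideal. The Zariski tangent space of the reduced variety is the common kernel of $df_{C_0}$ over all $f\in I(\mathcal D_k)$. The $(k+1)$-minors lie in $I(\mathcal D_k)$ simply because they vanish on $\mathcal D_k$. Imposing their differentials as constraints can only shrink the space, so $T_{C_0}\mathcal D_k\subseteq\bigcap_f\ker df_{C_0}$ over the minors $f$ holds unconditionally.

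Knowing that the minors generate $I(\mathcal D_k)$ (primality of determinantal ideals, Eagon--Hochster) would only matter for the reverse inclusion. That is, it would be needed to conclude that the common kernel of the minors' differentials is the whole tangent space. Your curve $\gamma(t)$ already supplies that inclusion, because every $f\in I(\mathcal D_k)$ satisfies $f(\gamma(t))\equiv0$. So the ``main obstacle'' is not an obstacle, and the two inclusions you proved already establish the lemma.
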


\begin{proof}
This is the standard tangent-space formula for determinantal varieties;
see \cite[Example~14.16]{Harris}.
\end{proof}

\subsection{An equation for tangent vectors}

Put $\mathcal Q=\mathcal F\oplus\mathcal G$ and write
$\mathcal A_{k,a,b}$ in two-block form as
\begin{equation}\label{eq:A-two-block}
\mathcal A_{k,a,b}
=
\left\{
S(A,X,C)
=
\begin{pmatrix}
A&X\\
0&C
\end{pmatrix}:
A\in\operatorname{End}(\mathcal E),\ X\in\operatorname{Hom}(\mathcal Q,\mathcal E),
\ C\in\mathcal C_{a,b}
\right\}.
\end{equation}
Define
\[
\mathcal Z_{a,b}
=
\left\{
\begin{pmatrix}
U&0\\
R&V
\end{pmatrix}
:
U\in M_a(\mathbb C),\
R\in M_{b\times a}(\mathbb C),\
V\in M_b(\mathbb C),\
\operatorname{tr}U+\operatorname{tr}V=0
\right\}.
\]
Then
\begin{equation}\label{eq:C-Z-direct-sum}
\operatorname{End}(\mathcal Q)=\mathcal C_{a,b}\oplus\mathcal Z_{a,b}.
\end{equation}

\begin{lemma}\label{lem:tangent-master}
Assume $2\leq k\leq n-2$ and let $(a,b)$ be one of the
choices in \eqref{eq:ab-choices}. Let
\[
\phi\in T_{\mathcal A_{k,a,b}}\mathfrak X_{n,k},
\qquad
\phi(S)=
\begin{pmatrix}
0&0\\
\phi_{21}(S)&\phi_{22}(S)
\end{pmatrix}
\]
in the decomposition $\mathbb C^n=\mathcal E\oplus\mathcal Q$. For
\[
S=S(A,X,C),
\qquad
T=S(A',X',C'),
\]
set
\[
K_{11}=[A,A'],
\qquad
K_{12}=AX'+XC'-A'X-X'C.
\]
If $K_{11}$ is invertible, then
\begin{equation}\label{eq:tangent-master-short}
\Gamma_{22}
=
\Gamma_{21}K_{11}^{-1}K_{12},
\end{equation}
where
\[
\begin{aligned}
\Gamma_{21}
&=
\phi_{21}(S)A'-C'\phi_{21}(S)
+C\phi_{21}(T)-\phi_{21}(T)A,\\
\Gamma_{22}
&=
\phi_{21}(S)X'-\phi_{21}(T)X
+[\phi_{22}(S),C']+[C,\phi_{22}(T)].
\end{aligned}
\]

\end{lemma}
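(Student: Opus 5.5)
The plan is to compute the first-order condition that a tangent vector $\phi$ must satisfy, using the rank condition on commutators together with Lemma~\ref{lem:determinantal-tangent}. I work in the graph chart \eqref{eq:graph-chart-map}. A tangent vector $\phi\in\operatorname{Hom}(\mathcal A_{k,a,b},\mathcal W_{k,a,b})$ to $\mathfrak X_{n,k}$ at $\mathcal A_{k,a,b}$ satisfies every linearized equation coming from polynomials that vanish on $\mathfrak X_{n,k}$ near $\mathcal A_{k,a,b}$.

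\textbf{Polynomials that vanish on the chart.} For fixed $S,T\in\mathcal A_{k,a,b}$, consider the polynomial map
\[
\psi\mapsto [S+\psi(S),\,T+\psi(T)].
\]
For $\Gamma_\psi\in\mathfrak X_{n,k}$, its image lies in $\mathcal D_k$. Hence every polynomial in the ideal of $\mathcal D_k$, pulled back along this map, vanishes on $\Theta^{-1}(\mathfrak X_{n,k})$. By the chain rule, the differential at $\psi=0$ sends $\phi$ into $T_{K}\mathcal D_k$, where $K=[S,T]$. Concretely,
\[
K_1:=[\phi(S),T]+[S,\phi(T)]\in T_K\mathcal D_k .
\]

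\textbf{Computing $K$, its kernel and its image.} In the two-block form \eqref{eq:A-two-block},
\[
K=\begin{pmatrix}K_{11}&K_{12}\\0&0\end{pmatrix},
\]
because $[C,C']=0$; the upper-right entry is exactly the $K_{12}$ of the statement. If $K_{11}$ is invertible, then $\operatorname{rank}K=k$ and $\operatorname{im}K=\mathcal E$. Moreover,
\[
\ker K=\left\{\begin{pmatrix}-K_{11}^{-1}K_{12}v\\ v\end{pmatrix}: v\in\mathcal Q\right\}.
\]
Lemma~\ref{lem:determinantal-tangent} therefore says that the $\mathcal Q$-component of $K_1$ applied to every such vector vanishes. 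This means
\[
(K_1)_{22}=(K_1)_{21}K_{11}^{-1}K_{12}.
\]

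\textbf{Expanding $K_1$ blockwise.} Since $\phi(S)$ has only lower blocks $\phi_{21}(S),\phi_{22}(S)$, a block multiplication gives
\[
\begin{aligned}
(K_1)_{21}&=\phi_{21}(S)A'-C'\phi_{21}(S)+C\phi_{21}(T)-\phi_{21}(T)A,\\
(K_1)_{22}&=\phi_{21}(S)X'+[\phi_{22}(S),C']-\phi_{21}(T)X+[C,\phi_{22}(T)].
\end{aligned}
\]
These are $\Gamma_{21}$ and $\Gamma_{22}$, which yields \eqref{eq:tangent-master-short}.

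\textbf{Main obstacle.} The delicate point is justifying that the Zariski tangent space of the reduced set $\mathfrak X_{n,k}$ maps into $T_K\mathcal D_k$. I handle this by noting that the pullbacks of the generators of $I(\mathcal D_k)$, namely the $(k+1)$-minors, lie in $I(\Theta^{-1}\mathfrak X_{n,k})$. Since $\mathcal D_k$ is reduced with these generators, Lemma~\ref{lem:determinantal-tangent} computes the Zariski tangent space via the differentials of these minors. Pulling back then gives the containment.
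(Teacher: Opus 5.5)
Your proposal is correct and follows the paper's proof essentially step by step. Like the paper, you pull back the rank condition along $\psi\mapsto[S+\psi(S),T+\psi(T)]$, differentiate at $\psi=0$ to land in $T_{[S,T]}\mathcal D_k$, and then use Lemma~\ref{lem:determinantal-tangent} to read off the lower component of $[\phi(S),T]+[S,\phi(T)]$ on $\ker[S,T]$. Your added justification of the chain-rule step, via pullbacks of elements of $I(\mathcal D_k)$, is just an expanded version of what the paper calls ``differentiating''; it does not even require the $(k+1)$-minors to generate the ideal.
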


\begin{proof}
Fix $S,T\in\mathcal A_{k,a,b}$ and consider the regular map
\[
\mathcal C_{S,T}:
\operatorname{Hom}(\mathcal A_{k,a,b},\mathcal W_{k,a,b})
\longrightarrow
M_n(\mathbb C),
\qquad
\psi\longmapsto
[S+\psi(S),\,T+\psi(T)].
\]
If
\[
\psi\in
\Theta^{-1}
\left(
\mathfrak X_{n,k}\cap\mathcal U_{\mathcal W}
\right),
\]
then $\Gamma_\psi=\Theta(\psi)$ belongs to $\mathfrak X_{n,k}$, while
$S+\psi(S)$ and $T+\psi(T)$ belong to $\Gamma_\psi$. Hence the defining
rank condition of $\mathfrak X_{n,k}$ gives
\[
\operatorname{rank}
[S+\psi(S),\,T+\psi(T)]
\leq k.
\]
Therefore
\begin{equation}\label{eq:commutator-map-into-Dk}
\mathcal C_{S,T}
\left(
\Theta^{-1}
\left(
\mathfrak X_{n,k}\cap\mathcal U_{\mathcal W}
\right)
\right)
\subseteq
\mathcal D_k.
\end{equation}
Since
\[
\phi
\in
T_0
\Theta^{-1}
\left(
\mathfrak X_{n,k}\cap\mathcal U_{\mathcal W}
\right),
\]
differentiating \eqref{eq:commutator-map-into-Dk} at $0$ gives
\[
(d\mathcal C_{S,T})_0(\phi)
\in
T_{\mathcal C_{S,T}(0)}\mathcal D_k
=
T_{[S,T]}\mathcal D_k.
\]
Moreover,
\[
(d\mathcal C_{S,T})_0(\phi)
=
\left.\frac{d}{dt}\right|_{t=0}
[S+t\phi(S),\,T+t\phi(T)]=
[\phi(S),T]+[S,\phi(T)].
\]
Thus
\begin{equation}\label{eq:first-order-commutator-tangent}
[\phi(S),T]+[S,\phi(T)]
\in
T_{[S,T]}\mathcal D_k.
\end{equation}
Now set
\[
C_0=[S,T]
=
\begin{pmatrix}
K_{11}&K_{12}\\
0&0
\end{pmatrix},
\qquad
C_1=[\phi(S),T]+[S,\phi(T)]
=
\begin{pmatrix}
*&*\\
\Gamma_{21}&\Gamma_{22}
\end{pmatrix},
\]
where direct block multiplication gives
\[
\begin{aligned}
\Gamma_{21}
&=
\phi_{21}(S)A'-C'\phi_{21}(S)
+C\phi_{21}(T)-\phi_{21}(T)A,\\
\Gamma_{22}
&=
\phi_{21}(S)X'-\phi_{21}(T)X
+[\phi_{22}(S),C']+[C,\phi_{22}(T)].
\end{aligned}
\]
If $K_{11}$ is invertible, then $\operatorname{rank}C_0=k$, and
\[
\ker C_0
=
\left\{
\begin{pmatrix}
-K_{11}^{-1}K_{12}v\\
v
\end{pmatrix}
:
v\in\mathcal Q
\right\},
\qquad
\operatorname{im}C_0
=
\mathcal E\oplus0.
\]
By \eqref{eq:first-order-commutator-tangent}, $C_1\in T_{C_0}\mathcal D_k$. 
Lemma~\ref{lem:determinantal-tangent} therefore gives $C_1(\ker C_0)
\subseteq
\operatorname{im}C_0$. Hence, for every $v\in\mathcal Q$, the lower component of
\[
C_1
\begin{pmatrix}
-K_{11}^{-1}K_{12}v\\
v
\end{pmatrix}
\]
must vanish. Thus
\[
-\Gamma_{21}K_{11}^{-1}K_{12}v+\Gamma_{22}v=0
\qquad
(v\in\mathcal Q).
\]
Since this holds for every $v\in\mathcal Q$, $\Gamma_{22}
=
\Gamma_{21}K_{11}^{-1}K_{12}$.
\end{proof}

\subsection{The lower-left block}

\begin{lemma}\label{lem:lower-left-X}
Assume $2\leq k\leq n-2$ and let $(a,b)$ be one of the
choices in \eqref{eq:ab-choices}. Let
\[
\phi\in T_{\mathcal A_{k,a,b}}\mathfrak X_{n,k}.
\]
Under \eqref{eq:Grassmann-tangent-chart}, write
\[
\phi(S)=
\begin{pmatrix}
0&0\\
\phi_{21}(S)&\phi_{22}(S)
\end{pmatrix}
\]
in the decomposition $\mathbb C^n=\mathcal E\oplus\mathcal Q$. Then
\[
\phi_{21}(S(0,X,0))=0
\qquad
(X\in\operatorname{Hom}(\mathcal Q,\mathcal E)).
\]
\end{lemma}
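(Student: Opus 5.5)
The strategy is to feed suitably chosen pairs $S,T\in\mathcal A_{k,a,b}$ into Lemma~\ref{lem:tangent-master}. We make the unknown map $Y(X)=\phi_{21}(S(0,X,0))$ appear quadratically, and then separate homogeneous components. Since $\phi$ is linear, $\phi_{21}(S(A,X,C))=\phi_{21}(S(A,0,0))+Y(X)+\phi_{21}(S(0,0,C))$, and $Y\colon\operatorname{Hom}(\mathcal Q,\mathcal E)\to\operatorname{Hom}(\mathcal E,\mathcal Q)$ is linear.

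\emph{Choice of test pair.} Fix $A,A'\in M_k(\mathbb C)$ with $K=[A,A']$ invertible; for example $A=A_\ast$, $A'=B_\ast$, $K=H_\ast$, or any simultaneous conjugate of this pair. For $t,\mu\in\mathbb C$, take
\[
S=S(A,tX,0),\qquad T=S(A',0,\mu I_m),
\]
which is legitimate because $\mu I_m\in\mathcal C_{a,b}$. Then $K_{11}=K$ is invertible and $K_{12}=t(\mu X-A'X)$. Since $C=0$ and $C'=\mu I$, the formulas of Lemma~\ref{lem:tangent-master} give
\[
\Gamma_{21}=\phi_{21}(S)(A'-\mu I_k)-\phi_{21}(T)A,\qquad
\Gamma_{22}=-t\,\phi_{21}(T)X.
\]
Here $\phi_{21}(S)=\phi_{21}(S(A,0,0))+tY(X)$, and $\phi_{21}(T)$ does not depend on $t$.

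\emph{Extracting the coefficient.} Identity \eqref{eq:tangent-master-short} is polynomial in $t$. The left side is linear in $t$. On the right side, the only $t^2$ term is
\[
t^2\,Y(X)(A'-\mu I)K^{-1}(\mu I-A')X .
\]
Comparing $t^2$ coefficients gives $Y(X)(A'-\mu I)K^{-1}(\mu I-A')X=0$ for all $\mu$. The $\mu^2$ coefficient of this gives
\[
Y(X)\,K^{-1}X=0
\]
for every $X$ and every invertible commutator $K$. Applying this to the conjugates $gH_\ast g^{-1}=[gA_\ast g^{-1},gB_\ast g^{-1}]$ shows that $Y(X)ZX=0$ for all $Z$ in the linear span of the conjugacy class of $H_\ast^{-1}=\operatorname{diag}(1,\dots,1,-1/(k-1))$.

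\emph{The spanning step.} This is the step I expect to need the most care. Because $k\ge2$, $H_\ast^{-1}$ is non-scalar with nonzero trace. The span of its conjugacy class is a $\operatorname{GL}_k$-stable subspace of $M_k(\mathbb C)=\mathbb C I\oplus\mathfrak{sl}_k$. It meets $\mathfrak{sl}_k$ nontrivially, since it contains differences of conjugates and $H_\ast^{-1}$ is not central, so by irreducibility of $\mathfrak{sl}_k$ it contains all of $\mathfrak{sl}_k$. It also contains an element of nonzero trace, so it contains $I$. Hence the span is all of $M_k(\mathbb C)$, and $Y(X)ZX=0$ for all $Z\in M_k(\mathbb C)$. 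Now let $X\neq0$ and take $Z=uw^{\mathsf T}$. Then $(Y(X)u)(w^{\mathsf T}X)=0$, and choosing $w$ with $w^{\mathsf T}X\neq0$ forces $Y(X)u=0$ for every $u$. Thus $Y(X)=0$, which is the claim; the case $X=0$ is trivial.
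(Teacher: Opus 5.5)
Your test pair $S=S(A,tX,0)$, $T=S(A',0,\mu I_m)$ and your extraction of the $t^2\mu^2$ coefficient are the same as the paper's. They correctly give $Y(X)\,gH_\ast^{-1}g^{-1}X=0$ for all $g\in\operatorname{GL}(\mathcal E)$.

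\textbf{The gap is in the spanning step.} The claim that $H_\ast^{-1}$ has nonzero trace is false for $k=2$. Indeed $\operatorname{tr}H_\ast^{-1}=(k-1)-\frac{1}{k-1}=\frac{k(k-2)}{k-1}$, which vanishes at $k=2$, where $H_\ast^{-1}=\operatorname{diag}(1,-1)$. This does not depend on the particular pair $(A_\ast,B_\ast)$. For $k=2$, every invertible commutator $K$ satisfies $K^2=-(\det K)I$ by Cayley--Hamilton, so $K^{-1}=-K/\det K$ is always traceless.

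Hence for $k=2$ the span you obtain is only $\mathfrak{sl}_2$, and $I$ is not in it. The rank-one matrices $uw^{\mathsf T}$ are then available only when $w^{\mathsf T}u=0$, and your final step breaks. Take $X$ of rank one with image $\mathbb C x$ and $u=x$: every admissible $w$ has $w^{\mathsf T}X=0$, so you learn nothing about $Y(X)x$. Since $k=2$ lies in the standing range $2\leq k\leq n-2$, this case must be handled.

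\textbf{The repair} uses only the part of your spanning argument that holds for all $k\geq2$. Your irreducibility argument shows the span contains $\mathfrak{sl}_k$, because $H_\ast^{-1}$ is never scalar. So $Y(X)ZX=0$ for all $Z\in\mathfrak{sl}_k$. For $X\neq0$, pick $v$ with $x=Xv\neq0$.

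Because $k\geq2$, a traceless matrix can send $x$ anywhere. In a basis with $x=e_1$, prescribe the first column and restore trace zero through another diagonal entry. Thus $\{Zx:Z\in\mathfrak{sl}_k\}=\mathbb C^k$, and $Y(X)=0$ follows. This is exactly how the paper concludes: it only asserts $\mathfrak{sl}(\mathcal E)\subseteq\operatorname{span}\{gH_\ast^{-1}g^{-1}\}$ and then uses $\mathfrak{sl}(\mathcal E)\cdot(Xv)=\mathcal E$.
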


\begin{proof}
Define
\[
\beta(X)=\phi_{21}(S(0,X,0)).
\]
Fix $X\in\operatorname{Hom}(\mathcal Q,\mathcal E)$. For
\[
g\in\operatorname{GL}(\mathcal E),
\qquad
\lambda,t\in\mathbb C,
\]
apply Lemma~\ref{lem:tangent-master} to
\[
S=S(gA_\ast g^{-1},tX,0),
\qquad
T=S(gB_\ast g^{-1},0,\lambda I_{\mathcal Q}).
\]
Here
\[
K_{11}=gH_\ast g^{-1},
\qquad
K_{12}=t(\lambda I_{\mathcal E}-gB_\ast g^{-1})X.
\]
By linearity of $\phi_{21}$, the $t\lambda$-coefficient of
$\Gamma_{21}$ is $-\beta(X)$, while $\Gamma_{22}$ has no
$t^2\lambda^2$-term. Thus comparison of the $t^2\lambda^2$-coefficients in
\eqref{eq:tangent-master-short} gives
\begin{equation}\label{eq:beta-conjugates}
\beta(X)gH_\ast^{-1}g^{-1}X=0
\qquad
(g\in\operatorname{GL}(\mathcal E)).
\end{equation}
Since
\[
\mathfrak{sl}(\mathcal E)\subseteq
\operatorname{span}_{\mathbb C}
\{gH_\ast^{-1}g^{-1}:g\in\operatorname{GL}(\mathcal E)\},
\]
\eqref{eq:beta-conjugates} implies
\[
\beta(X)YX=0
\qquad
(Y\in\mathfrak{sl}(\mathcal E)).
\]
If $X=0$, then $\beta(X)=0$. If $X\neq0$, choose $v\in\mathcal Q$ such
that $Xv\neq0$. Then
\[
\beta(X)Y(Xv)=0
\qquad
(Y\in\mathfrak{sl}(\mathcal E)).
\]
Since $k\geq2$ and $Xv\neq0$,
$\mathfrak{sl}(\mathcal E)\cdot(Xv)=\mathcal E$. Therefore $\beta(X)=0$.
\end{proof}

\begin{proposition}\label{prop:lower-left-block}
Assume $2\leq k\leq n-2$ and let $(a,b)$ be one of the
choices in \eqref{eq:ab-choices}. Let
\[
\phi\in T_{\mathcal A_{k,a,b}}\mathfrak X_{n,k}.
\]
In the two-block decomposition $\mathbb C^n=\mathcal E\oplus\mathcal Q$, write
\[
\phi(S)=
\begin{pmatrix}
0&0\\
\phi_{21}(S)&\phi_{22}(S)
\end{pmatrix}.
\]
Then there is a unique
$L_\phi\in\operatorname{Hom}(\mathcal E,\mathcal Q)$ such that
\begin{equation}\label{eq:phi21-general-statement}
\phi_{21}(S(A,X,C))=L_\phi A-CL_\phi
\end{equation}
for all $A$, $X$, and $C$.
\end{proposition}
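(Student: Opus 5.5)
The plan is to feed Lemma~\ref{lem:tangent-master} well-chosen pairs $S,T$ for which the $\phi_{22}$-terms drop out, and read off a functional equation for $\phi_{21}$. By Lemma~\ref{lem:lower-left-X} and linearity, $\phi_{21}(S(A,X,C))=\alpha(A)+\gamma(C)$ for linear maps $\alpha:\operatorname{End}(\mathcal E)\to\operatorname{Hom}(\mathcal E,\mathcal Q)$ and $\gamma:\mathcal C_{a,b}\to\operatorname{Hom}(\mathcal E,\mathcal Q)$. It therefore suffices to find one $L$ with $\alpha(A)=LA$ and $\gamma(C)=-CL$. Uniqueness of $L_\phi$ is immediate, since $L_\phi=\phi_{21}(S(I,0,0))$.

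\emph{Step 1: the $A$-part.} Take $S=S(A,tX,0)$ and $T=S(A',0,0)$ with $[A,A']$ and $A'$ both invertible. Such pairs form a nonempty Zariski-open set because $k\geq2$; for instance, perturb $(A_\ast,B_\ast)$ by adding a multiple of $I$ to $B_\ast$. Then $K_{11}=[A,A']$ and $K_{12}=-tA'X$. Since $C'=0$, the term $[\phi_{22}(S),C']$ vanishes, and $[C,\phi_{22}(T)]=0$ because $C=0$. Comparing $t$-coefficients in \eqref{eq:tangent-master-short} for all $X$ gives
\[
\alpha(A')=\bigl(\alpha(A)A'-\alpha(A')A\bigr)[A,A']^{-1}A'.
\]
Put $L_{A'}=\alpha(A')A'^{-1}$. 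The identity rearranges to $\alpha(A)A'-\alpha(A')A=L_{A'}[A,A']$, hence $\alpha(A)A'=L_{A'}AA'$, so $\alpha(A)=L_{A'}A$. For fixed invertible $A'$ this holds on a nonempty open set of $A$, and by linearity and Zariski density it holds for all $A$. Taking $A=I$ gives $L_{A'}=\alpha(I)$, which is independent of $A'$. Set $L=\alpha(I)$, so $\alpha(A)=LA$ for all $A$.

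\emph{Step 2: the $C$-part.} Take $S=S(A,tX,C)$ with $C\in\mathcal C_{a,b}$ arbitrary, and $T=S(A',0,0)$ with $A,A'$ as before. Again $C'=0$ kills $[\phi_{22}(S),C']$. The term $[C,\phi_{22}(T)]$ is independent of $t$, so it does not enter the $t$-coefficient. That coefficient gives
\[
-\phi_{21}(T)X=\Gamma_{21}[A,A']^{-1}(-A'X),
\qquad
\Gamma_{21}=(LA+\gamma(C))A'+CLA'-LA'A .
\]
The left side does not involve $C$. Extracting the part linear in $C$ therefore gives $(\gamma(C)+CL)A'[A,A']^{-1}A'=0$. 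Since $A'$ and $[A,A']$ are invertible, this yields $\gamma(C)=-CL$, which is \eqref{eq:phi21-general-statement}.

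The main obstacle is ensuring that the unknown $\phi_{22}$ never contaminates the order-$t$ equation, and that one can choose $A'$ and $[A,A']$ simultaneously invertible. The choice $C'=0$ handles the first issue. A generic pair in $M_k(\mathbb C)$ with $k\geq2$ handles the second: for $k=2$, for example, $A=E_{12}$ and $A'=E_{21}+I$ give $[A,A']=\operatorname{diag}(1,-1)$.
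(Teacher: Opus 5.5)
Your argument is correct and is essentially the paper's. After $\beta=0$ from Lemma~\ref{lem:lower-left-X}, you apply Lemma~\ref{lem:tangent-master} with $X$ scaled by $t$, compare $t$-coefficients over all $X$, and extend by Zariski density. The only difference is your test matrix $T=S(A',0,0)$ with $A'$ invertible. There $C'=0$ removes the $\phi_{22}$-term and $\phi_{21}(T)=\alpha(A')$ has no $\gamma$-contribution. The paper instead uses $T_\ast=S(B_\ast,0,I_{\mathcal Q})$, which needs the extra shift $A\mapsto A+sI$ to determine $\phi_{21}(T_\ast)$ first.

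One small wording fix in Step~1: ``for fixed invertible $A'$'' should read ``for a fixed invertible $A'$ admitting some $A$ with $[A,A']$ invertible'' (e.g.\ $A'=B_\ast+cI$ with $c\neq0$). For $A'=I$ that set is empty, and a single such $A'$ suffices for your conclusion.
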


\begin{proof}

Via the graph chart \eqref{eq:graph-chart-map}, regard
\[
\phi
\in
T_0\Theta^{-1}
\left(
\mathfrak X_{n,k}\cap\mathcal U_{\mathcal W}
\right)
\subseteq
\operatorname{Hom}(\mathcal A_{k,a,b},\mathcal W_{k,a,b}).
\]
Thus $\phi(S)\in\mathcal W_{k,a,b}$ for every
$S\in\mathcal A_{k,a,b}$. In the two-block decomposition
$\mathbb C^n=\mathcal E\oplus\mathcal Q$, write
\[
\phi(S)
=
\begin{pmatrix}
0&0\\
\phi_{21}(S)&\phi_{22}(S)
\end{pmatrix},
\qquad
\phi_{21}(S)\in\operatorname{Hom}(\mathcal E,\mathcal Q),
\quad
\phi_{22}(S)\in\mathcal Z_{a,b}.
\]
Using the direct-sum decomposition in \eqref{eq:A-two-block}, define
\[
\alpha(A)=\phi_{21}(S(A,0,0)),
\qquad
\beta(X)=\phi_{21}(S(0,X,0)),
\qquad
\gamma(C)=\phi_{21}(S(0,0,C)).
\]
By linearity,
\begin{equation}\label{eq:phi21-alpha-beta-gamma}
\phi_{21}(S(A,X,C))
=
\alpha(A)+\beta(X)+\gamma(C).
\end{equation}

By Lemma~\ref{lem:lower-left-X}, $\beta=0$.

Put
\[
L_\phi=\alpha(I_{\mathcal E}),
\qquad
\Xi_\ast=I_{\mathcal E}-B_\ast,
\qquad
T_\ast=S(B_\ast,0,I_{\mathcal Q}).
\]
Since $B_\ast$ is nilpotent, $\Xi_\ast$ is invertible. Fix $A\in\operatorname{End}(\mathcal E)$ such that
\[
K_{11}=[A,B_\ast]
\]
is invertible, and apply Lemma~\ref{lem:tangent-master} to
\[
S=S(A+sI_{\mathcal E},tX,C),
\qquad
T=T_\ast.
\]
By Lemma~\ref{lem:lower-left-X},
\[
\begin{aligned}
K_{12}&=t\Xi_\ast X,\\
\Gamma_{22}
&=[C,\phi_{22}(T_\ast)]-t\,\phi_{21}(T_\ast)X,\\
\Gamma_{21}
&=-\bigl(\alpha(A)+sL_\phi+\gamma(C)\bigr)\Xi_\ast
+C\phi_{21}(T_\ast)
-\phi_{21}(T_\ast)(A+sI_{\mathcal E}).
\end{aligned}
\]
Comparing the coefficient of $st$ in
\eqref{eq:tangent-master-short} gives
\[
0
=
-\bigl(L_\phi\Xi_\ast+\phi_{21}(T_\ast)\bigr)
K_{11}^{-1}\Xi_\ast X
\qquad
(X\in\operatorname{Hom}(\mathcal Q,\mathcal E)).
\]
Since $K_{11}$ and $\Xi_\ast$ are invertible and $X$ is arbitrary,
\begin{equation}\label{eq:phi21-Tstar}
\phi_{21}(T_\ast)=-L_\phi\Xi_\ast.
\end{equation}
Now set $s=0$ and compare the coefficients of $t$ in
\eqref{eq:tangent-master-short}. Using
\eqref{eq:phi21-Tstar}, we obtain
\[
L_\phi\Xi_\ast X
=
\bigl(
-(\alpha(A)+\gamma(C)+CL_\phi)\Xi_\ast
+L_\phi\Xi_\ast A
\bigr)K_{11}^{-1}\Xi_\ast X
\]
for every $X$. Hence
\[
L_\phi\Xi_\ast
=
\bigl(
-(\alpha(A)+\gamma(C)+CL_\phi)\Xi_\ast
+L_\phi\Xi_\ast A
\bigr)K_{11}^{-1}\Xi_\ast.
\]
Multiplying on the right by $\Xi_\ast^{-1}K_{11}$ gives
\[
(\alpha(A)+\gamma(C)+CL_\phi)\Xi_\ast
=
L_\phi(\Xi_\ast A-K_{11}).
\]
Since
\[
\Xi_\ast A-[A,B_\ast]=A\Xi_\ast,
\]
we obtain
\begin{equation}\label{eq:alpha-gamma-open}
\alpha(A)+\gamma(C)+CL_\phi=L_\phi A
\end{equation}
whenever $[A,B_\ast]$ is invertible. The set
\[
\{A\in\operatorname{End}(\mathcal E):[A,B_\ast]\text{ is invertible}\}
\]
is a nonempty Zariski-open subset of $\operatorname{End}(\mathcal E)$, since
it contains $A_\ast$. Taking $C=0$ in \eqref{eq:alpha-gamma-open} gives
\[
\alpha(A)=L_\phi A
\]
on this open set. Since both sides are linear in $A$, density gives
\[
\alpha(A)=L_\phi A
\qquad
(A\in\operatorname{End}(\mathcal E)).
\]
Substitution into \eqref{eq:alpha-gamma-open} then gives
\[
\gamma(C)=-CL_\phi
\qquad
(C\in\mathcal C_{a,b}).
\]
Together with Lemma~\ref{lem:lower-left-X} and \eqref{eq:phi21-alpha-beta-gamma}, this proves
\eqref{eq:phi21-general-statement}.

The value of the lower-left block at $S(I_{\mathcal E},0,0)$ is $L_\phi$,
so $L_\phi$ is unique.
\end{proof}

\subsection{The lower-right block}

\begin{lemma}\label{lem:centralizer-Cab}
The centralizer of
$\mathcal C_{a,b}$ in $\operatorname{End}(\mathcal Q)$ is
$\mathcal C_{a,b}$.
\end{lemma}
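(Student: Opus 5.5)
The approach is a direct block computation in the decomposition $\mathcal Q=\mathcal F\oplus\mathcal G$ with $\dim\mathcal F=a$ and $\dim\mathcal G=b$. Since $\mathcal C_{a,b}$ is commutative, it lies in its own centralizer, so only the reverse inclusion needs proof. Let
\[
Z=\begin{pmatrix} Z_{11}&Z_{12}\\ Z_{21}&Z_{22}\end{pmatrix}\in\operatorname{End}(\mathcal Q)
\]
commute with every element of $\mathcal C_{a,b}$. Because $\lambda I$ commutes with everything, the condition reduces to $[Z,N_D]=0$ for every $D\in M_{a\times b}(\mathbb C)$, where
\[
N_D=\begin{pmatrix}0&D\\0&0\end{pmatrix}.
\]

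Next compute both products:
\[
ZN_D=\begin{pmatrix}0&Z_{11}D\\0&Z_{21}D\end{pmatrix},
\qquad
N_DZ=\begin{pmatrix}DZ_{21}&DZ_{22}\\0&0\end{pmatrix}.
\]
Comparing blocks gives three conditions for all $D$:
\[
DZ_{21}=0,\qquad Z_{21}D=0,\qquad Z_{11}D=DZ_{22}.
\]
Since $a,b\geq1$, the matrices $D$ range over all $a\times b$ matrices, and taking $D=E_{ij}$ gives $Z_{21}=0$ from either of the first two conditions.

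The main step is the third condition. With $D=E_{ij}$, the equation $Z_{11}E_{ij}=E_{ij}Z_{22}$ says that the $j$-th column of $Z_{11}$, placed in column $j$, equals the $i$-th row of $Z_{22}$, placed in row $i$. Hence $(Z_{11})_{pi}=0$ for $p\neq i$, $(Z_{22})_{jq}=0$ for $q\neq j$, and $(Z_{11})_{ii}=(Z_{22})_{jj}$. Since this holds for all $i\leq a$ and $j\leq b$, both blocks are diagonal with a common constant, so $Z_{11}=\mu I_a$ and $Z_{22}=\mu I_b$ for some $\mu\in\mathbb C$. The block $Z_{12}$ is unconstrained, and therefore $Z\in\mathcal C_{a,b}$.

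No step is a real obstacle. The only point requiring care is that $a,b\geq1$, which holds because $m\geq2$, so that every elementary matrix $E_{ij}$ is an admissible choice of $D$.
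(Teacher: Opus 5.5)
Your proof is correct and follows the paper's own argument: the same block comparison yielding $DZ_{21}=0$, $Z_{21}D=0$, $Z_{11}D=DZ_{22}$, followed by substituting matrix units. One small index slip: $Z_{11}E_{ij}$ places the $i$-th column of $Z_{11}$ in column $j$, and $E_{ij}Z_{22}$ places the $j$-th row of $Z_{22}$ in row $i$. The conclusions you actually draw, $(Z_{11})_{pi}=0$ for $p\neq i$, $(Z_{22})_{jq}=0$ for $q\neq j$, and $(Z_{11})_{ii}=(Z_{22})_{jj}$, are the correct ones.
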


\begin{proof}
Let
\[
Y=
\begin{pmatrix}
Y_{11}&Y_{12}\\
Y_{21}&Y_{22}
\end{pmatrix}
\in\operatorname{End}(\mathcal Q)
\]
centralize $\mathcal C_{a,b}$. Since
\[
\begin{pmatrix}
0&D\\
0&0
\end{pmatrix}
\in\mathcal C_{a,b}
\qquad
(D\in M_{a\times b}(\mathbb C)),
\]
we have
\[
\begin{pmatrix}
0&Y_{11}D\\
0&Y_{21}D
\end{pmatrix}
=
\begin{pmatrix}
DY_{21}&DY_{22}\\
0&0
\end{pmatrix}
\qquad
(D\in M_{a\times b}(\mathbb C)).
\]
Hence
\[
DY_{21}=0,\qquad
Y_{21}D=0,\qquad
Y_{11}D=DY_{22}
\qquad
(D\in M_{a\times b}(\mathbb C)).
\]
Using matrix units gives
\[
Y_{21}=0,
\qquad
Y_{11}=\lambda I_a,
\qquad
Y_{22}=\lambda I_b
\]
for some $\lambda\in\mathbb C$, while $Y_{12}$ is arbitrary. Thus
$Y\in\mathcal C_{a,b}$. The reverse inclusion holds because
$\mathcal C_{a,b}$ is commutative.
\end{proof}

The complement $\mathcal Z_{a,b}$ in \eqref{eq:C-Z-direct-sum} gives the
standard graph-chart identification
\begin{equation}\label{eq:quotient-Grassmann-tangent}
T_{\mathcal C_{a,b}}
\operatorname{Gr}(s_m,M_m(\mathbb C))
\cong
\operatorname{Hom}(\mathcal C_{a,b},\mathcal Z_{a,b}).
\end{equation}
Define
\begin{equation}\label{eq:T-ab-equation}
\mathcal T_{a,b}
=
\left\{
\eta\in\operatorname{Hom}(\mathcal C_{a,b},\mathcal Z_{a,b}):
[\eta(C),C']+[C,\eta(C')]=0
\text{ for all }C,C'\in\mathcal C_{a,b}
\right\}.
\end{equation}
Every $\eta\in\mathcal T_{a,b}$ satisfies
\[
\eta(I_m)=0.
\]
Indeed, taking $C'=I_m$ in \eqref{eq:T-ab-equation} gives
\[
[C,\eta(I_m)]=0
\qquad(C\in\mathcal C_{a,b}).
\]
Thus $\eta(I_m)$ centralizes $\mathcal C_{a,b}$, so
Lemma~\ref{lem:centralizer-Cab} gives
$\eta(I_m)\in\mathcal C_{a,b}$. Since also
$\eta(I_m)\in\mathcal Z_{a,b}$,
\eqref{eq:C-Z-direct-sum} gives $\eta(I_m)=0$.

\begin{proposition}\label{prop:quotient-identities}
Let $\eta\in\mathcal T_{a,b}$.
Then there are unique linear maps
\[
R\colon M_{a\times b}(\mathbb C)\to M_{b\times a}(\mathbb C),
\qquad
U\colon M_{a\times b}(\mathbb C)\to M_a(\mathbb C),
\qquad
V\colon M_{a\times b}(\mathbb C)\to M_b(\mathbb C)
\]
such that
\begin{equation}\label{eq:eta-URV}
\eta
\left(
\begin{pmatrix}
\lambda I_a&D\\
0&\lambda I_b
\end{pmatrix}
\right)
=
\begin{pmatrix}
U(D)&0\\
R(D)&V(D)
\end{pmatrix},
\qquad
\operatorname{tr}U(D)+\operatorname{tr}V(D)=0.
\end{equation}
Moreover, for all $Y,Z\in M_{a\times b}(\mathbb C)$,
\begin{gather}
ZR(Y)=YR(Z),\label{eq:R-left}\\
R(Y)Z=R(Z)Y,\label{eq:R-right}\\
U(Y)Z-ZV(Y)+YV(Z)-U(Z)Y=0.
\label{eq:UV-identity}
\end{gather}
\end{proposition}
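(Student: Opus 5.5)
Since $\eta(I_m)=0$ was shown just before the statement, and every element of $\mathcal C_{a,b}$ is uniquely $\lambda I_m+N_D$ with
\[
N_D=\begin{pmatrix}0&D\\0&0\end{pmatrix},
\]
linearity gives $\eta(\lambda I_m+N_D)=\eta(N_D)$. Because $\eta(N_D)\in\mathcal Z_{a,b}$, we can write
\[
\eta(N_D)=\begin{pmatrix}U(D)&0\\R(D)&V(D)\end{pmatrix},
\qquad
\operatorname{tr}U(D)+\operatorname{tr}V(D)=0.
\]
The maps $U$, $R$, $V$ are linear in $D$ and are read off from the blocks of $\eta(N_D)$. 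This settles existence and uniqueness in \eqref{eq:eta-URV}.

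The identities come from the defining equation of $\mathcal T_{a,b}$ applied to $C=N_Y$ and $C'=N_Z$:
\[
[\eta(N_Y),N_Z]+[N_Y,\eta(N_Z)]=0.
\]
We compute this block by block. With
\[
W=\begin{pmatrix}U&0\\R&V\end{pmatrix},
\]
direct multiplication gives
\[
[W,N_Z]=\begin{pmatrix}-ZR & UZ-ZV\\ 0 & RZ\end{pmatrix}.
\]
The lower-left block vanishes because $W$ is block lower-triangular and $N_Z$ is strictly upper. Similarly,
\[
[N_Y,W']=-[W',N_Y]=\begin{pmatrix}YR' & -(U'Y-YV')\\ 0 & -R'Y\end{pmatrix}.
\]

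Adding the two commutators with $W=\eta(N_Y)$ and $W'=\eta(N_Z)$ and setting each block to zero gives the identities:
\begin{itemize}
\item the $(1,1)$ block gives $-ZR(Y)+YR(Z)=0$, which is \eqref{eq:R-left};
\item the $(2,2)$ block gives $R(Y)Z-R(Z)Y=0$, which is \eqref{eq:R-right};
\item the $(1,2)$ block gives $U(Y)Z-ZV(Y)-U(Z)Y+YV(Z)=0$, which is \eqref{eq:UV-identity}.
\end{itemize}

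There is no genuine obstacle here; the only care needed is the sign bookkeeping in the block commutators. I would double-check that sign by confirming that the $(1,2)$-block expression is antisymmetric under swapping $Y$ and $Z$, as the left-hand side of the defining equation is.
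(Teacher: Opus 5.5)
Your proposal is correct and follows essentially the same route as the paper: you use $\eta(I_m)=0$ and linearity to reduce to $\eta(N_D)$, read off $U$, $R$, $V$ from the $\mathcal Z_{a,b}$ block form, and expand $[\eta(N_Y),N_Z]+[N_Y,\eta(N_Z)]=0$ block by block. Your block commutators and signs are right, and the $(1,1)$, $(2,2)$, and $(1,2)$ blocks give exactly the three stated identities.
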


\begin{proof}
Every element of $\mathcal C_{a,b}$ has the form
\[
\begin{pmatrix}
\lambda I_a&D\\
0&\lambda I_b
\end{pmatrix}
=
\lambda I_m+
\begin{pmatrix}
0&D\\
0&0
\end{pmatrix}.
\]
By the observation following \eqref{eq:T-ab-equation} and linearity,
$\eta$ therefore depends only on $D$. Since its values lie in
$\mathcal Z_{a,b}$, there are unique linear maps $U$, $R$, and $V$ satisfying
\eqref{eq:eta-URV}.

For
\[
C_Y=
\begin{pmatrix}
0&Y\\
0&0
\end{pmatrix},
\qquad
C_Z=
\begin{pmatrix}
0&Z\\
0&0
\end{pmatrix},
\]
expanding
$[\eta(C_Y),C_Z]+[C_Y,\eta(C_Z)]=0$ gives
\eqref{eq:R-left}--\eqref{eq:UV-identity}.
\end{proof}

\begin{proposition}\label{prop:tangent-splitting}
Assume $2\leq k\leq n-2$ and let $(a,b)$ be one of the
choices in \eqref{eq:ab-choices}. Let
\[
\phi\in T_{\mathcal A_{k,a,b}}\mathfrak X_{n,k}.
\]
Then there exist unique
\[
L_\phi\in\operatorname{Hom}(\mathcal E,\mathcal Q),
\qquad
\eta_\phi\in\mathcal T_{a,b},
\]
such that, for every
\[
S(A,X,C)
=
\begin{pmatrix}
A&X\\
0&C
\end{pmatrix}
\in\mathcal A_{k,a,b},
\]
one has
\[
\phi(S(A,X,C))
=
\pi_{\mathcal W}
\left(
\left[
\begin{pmatrix}
0&0\\
L_\phi&0
\end{pmatrix},
S(A,X,C)
\right]
\right)
+
\begin{pmatrix}
0&0\\
0&\eta_\phi(C)
\end{pmatrix}.
\]
\end{proposition}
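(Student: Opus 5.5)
The plan is to subtract off a tangent vector coming from conjugation and then show that what remains is concentrated in the lower-right block and factors through $C$.

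Let $L_\phi$ be the map given by Proposition~\ref{prop:lower-left-block}, and put $Y_\phi=\begin{pmatrix}0&0\\ L_\phi&0\end{pmatrix}$. In the three-block notation, $Y_\phi$ is $X_{L,M,0}$, where $L$ and $M$ are the $\mathcal F$- and $\mathcal G$-components of $L_\phi$. So $\delta:=\pi_{\mathcal W}([Y_\phi,\cdot\,])$ lies in $T_{\mathcal A_{k,a,b}}\mathcal O_{k,a,b}$ by Proposition~\ref{prop:orbit-Akab-tangent}. Since $\mathcal O_{k,a,b}\subseteq\mathfrak X_{n,k}$, we also have $\delta\in T_{\mathcal A_{k,a,b}}\mathfrak X_{n,k}$. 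Hence $\psi:=\phi-\delta$ is again a tangent vector to $\mathfrak X_{n,k}$.

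A direct block computation gives
\[
[Y_\phi,S(A,X,C)]=\begin{pmatrix}-XL_\phi&0\\ L_\phi A-CL_\phi&L_\phi X\end{pmatrix}.
\]
The projection $\pi_{\mathcal W}$ discards the $(1,1)$ block and the $\mathcal C_{a,b}$-component of the $(2,2)$ block. It keeps the lower-left block $L_\phi A-CL_\phi$ and the $\mathcal Z_{a,b}$-component of $L_\phi X$. By \eqref{eq:phi21-general-statement}, the lower-left blocks of $\phi$ and $\delta$ agree. Therefore $\psi_{21}\equiv0$ and $\psi_{22}(S)\in\mathcal Z_{a,b}$ for all $S$. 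I expect this bookkeeping to be the most delicate step: one must check that $\pi_{\mathcal W}$ really acts blockwise as described, which follows from the explicit decomposition \eqref{eq:A-W-direct-sum}.

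Next apply Lemma~\ref{lem:tangent-master} to $\psi$. Since $\psi_{21}=0$, we get $\Gamma_{21}=0$, so whenever $K_{11}=[A,A']$ is invertible,
\[
[\psi_{22}(S(A,X,C)),C']+[C,\psi_{22}(S(A',X',C'))]=0 .
\]
The left side is polynomial in $(A,X,C,A',X',C')$. The set of pairs $(A,A')$ with $[A,A']$ invertible is a nonempty Zariski-open set, since it contains $(A_\ast,B_\ast)$. By density, the identity therefore holds for all arguments.

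Taking $C=0$ is not useful, so instead take $C'=0$ with $C$ arbitrary. This gives $[C,\psi_{22}(S(A',X',0))]=0$ for all $C\in\mathcal C_{a,b}$. By Lemma~\ref{lem:centralizer-Cab}, $\psi_{22}(S(A',X',0))\in\mathcal C_{a,b}\cap\mathcal Z_{a,b}=0$. By linearity, $\psi_{22}(S(A,X,C))=\eta_\phi(C)$ with $\eta_\phi(C):=\psi_{22}(S(0,0,C))$. Substituting back into the full identity gives exactly $[\eta_\phi(C),C']+[C,\eta_\phi(C')]=0$, that is, $\eta_\phi\in\mathcal T_{a,b}$. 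This yields the claimed formula $\phi=\delta+\begin{pmatrix}0&0\\0&\eta_\phi(\cdot)\end{pmatrix}$.

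For uniqueness, evaluate the formula at $S(I_{\mathcal E},0,0)$. The second summand vanishes there, because its argument has $C=0$. The first summand has lower-left block $L_\phi$, so $L_\phi$ is determined by $\phi$. Once $L_\phi$ is fixed, $\eta_\phi(C)$ equals the lower-right block of $\phi(S(0,0,C))-\delta(S(0,0,C))$, so $\eta_\phi$ is determined as well.
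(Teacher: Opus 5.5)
Your proposal is correct and follows the paper's proof essentially step for step. It subtracts the conjugation vector $\delta_{L_\phi}$, applies Lemma~\ref{lem:tangent-master} to the residual with $\Gamma_{21}=0$, extends by Zariski density from the locus where $[A,A']$ is invertible, removes the $(A,X)$-dependence via Lemma~\ref{lem:centralizer-Cab} and $\mathcal C_{a,b}\cap\mathcal Z_{a,b}=0$, and reads off uniqueness at $S(I_{\mathcal E},0,0)$. One small correction: setting $C=0$ is just as useful as $C'=0$ (it gives $[\psi_{22}(S(A,X,0)),C']=0$ for all $C'$, which is exactly the specialization the paper uses), so the two choices are symmetric.
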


\begin{proof}
Let $L_\phi$ be the map given by Proposition~\ref{prop:lower-left-block}. Set
\[
\widehat L_\phi
=
\begin{pmatrix}
0&0\\
L_\phi&0
\end{pmatrix},
\qquad
\delta_{L_\phi}(S)
=
\pi_{\mathcal W}([\widehat L_\phi,S]).
\]
After splitting $L_\phi:\mathcal E\to\mathcal F\oplus\mathcal G$ into its
two block rows, Proposition~\ref{prop:orbit-Akab-tangent} gives
\[
\delta_{L_\phi}
\in
T_{\mathcal A_{k,a,b}}\mathcal O_{k,a,b}
\subseteq
T_{\mathcal A_{k,a,b}}\mathfrak X_{n,k}.
\]
For $S=S(A,X,C)$, the lower-left block of $\delta_{L_\phi}(S)$ is
\[
[\widehat L_\phi,S]_{21}
=
L_\phi A-CL_\phi.
\]
By Proposition~\ref{prop:lower-left-block}, this is exactly the lower-left
block of $\phi(S)$. Hence, with
\[
\phi_{\mathrm{res}}:=\phi-\delta_{L_\phi},
\]
there is a linear map
\[
\psi:\mathcal A_{k,a,b}\longrightarrow\mathcal Z_{a,b}
\]
such that
\[
\phi_{\mathrm{res}}(S)
=
\begin{pmatrix}
0&0\\
0&\psi(S)
\end{pmatrix}
\qquad
(S\in\mathcal A_{k,a,b}).
\]
For
\[
S=S(A,X,C),
\qquad
T=S(A',X',C'),
\]
suppose that $[A,A']$ is invertible. Applying
Lemma~\ref{lem:tangent-master} to $\phi_{\mathrm{res}}$, the corresponding
$\Gamma_{21}$ vanishes, so \eqref{eq:tangent-master-short} gives
\[
[\psi(S),C']+[C,\psi(T)]=0.
\]
The condition
\[
[A,A']\in\operatorname{GL}(\mathcal E)
\]
defines a nonempty Zariski-open subset of
$\operatorname{End}(\mathcal E)^2$, since
$[A_\ast,B_\ast]=H_\ast$ is invertible by \eqref{eq:AB-star}. The displayed
identity is polynomial in the entries of $S$ and $T$ and holds on a
Zariski-dense open subset of the $(A,A')$-variables. Hence it holds for
all $S,T\in\mathcal A_{k,a,b}$, so
\begin{equation}\label{eq:psi-master}
[\psi(S),C']+[C,\psi(T)]=0.
\end{equation}

Now let $S=S(A,X,0)$. Taking $C=0$ in \eqref{eq:psi-master} gives
\[
[\psi(S),C']=0
\qquad
(C'\in\mathcal C_{a,b}).
\]
Hence $\psi(S)$ centralizes $\mathcal C_{a,b}$, so
Lemma~\ref{lem:centralizer-Cab} gives
\[
\psi(S)\in\mathcal C_{a,b}.
\]
Since also $\psi(S)\in\mathcal Z_{a,b}$ and
$\mathcal C_{a,b}\cap\mathcal Z_{a,b}=0$ by
\eqref{eq:C-Z-direct-sum}, we obtain
\[
\psi(S(A,X,0))=0.
\]
Therefore $\psi(S(A,X,C))$ depends only on $C$. Define
\[
\eta_\phi:\mathcal C_{a,b}\longrightarrow\mathcal Z_{a,b},
\qquad
\eta_\phi(C)=\psi(S(0,0,C)).
\]
Then $\eta_\phi$ is linear and
\[
\phi_{\mathrm{res}}(S(A,X,C))
=
\begin{pmatrix}
0&0\\
0&\eta_\phi(C)
\end{pmatrix}.
\]
Taking
\[
S=S(0,0,C),
\qquad
T=S(0,0,C')
\]
in \eqref{eq:psi-master} shows that $\eta_\phi$ satisfies
\eqref{eq:T-ab-equation}, hence $\eta_\phi\in\mathcal T_{a,b}$.

Uniqueness follows from
\[
L_\phi=\phi_{21}(S(I_{\mathcal E},0,0)),
\qquad
\eta_\phi(C)
=
\bigl((\phi-\delta_{L_\phi})(S(0,0,C))\bigr)_{22}.
\]
\end{proof}

\begin{proposition}\label{prop:quotient-tangent-inclusion}
Assume $2\leq k\leq n-2$ and let $(a,b)$ be one of the
choices in \eqref{eq:ab-choices}. Under
\eqref{eq:quotient-Grassmann-tangent},
\[
T_{\mathcal C_{a,b}}\mathfrak C_m
\subseteq
\mathcal T_{a,b}.
\]
\end{proposition}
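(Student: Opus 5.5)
This is the standard first-order computation in the graph chart of $\operatorname{Gr}(s_m,M_m(\mathbb C))$ at $\mathcal C_{a,b}$, with complement $\mathcal Z_{a,b}$ from \eqref{eq:C-Z-direct-sum}. The plan mirrors the proof of Lemma~\ref{lem:tangent-master}, except that the determinantal condition is replaced by the linear condition $[\,\cdot\,,\,\cdot\,]=0$.

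First consider the graph map
\[
\Theta_m:\operatorname{Hom}(\mathcal C_{a,b},\mathcal Z_{a,b})\to\mathcal U_{\mathcal Z},
\qquad
\eta\mapsto\{C+\eta(C):C\in\mathcal C_{a,b}\}.
\]
Here $\mathcal U_{\mathcal Z}$ is the open set of $s_m$-planes complementary to $\mathcal Z_{a,b}$, and $\Theta_m$ is an affine-chart isomorphism with $\Theta_m(0)=\mathcal C_{a,b}$. The identification \eqref{eq:quotient-Grassmann-tangent} is its differential at $0$. Consequently $T_{\mathcal C_{a,b}}\mathfrak C_m$ is the Zariski tangent space at $0$ of the algebraic set $\Theta_m^{-1}(\mathfrak C_m\cap\mathcal U_{\mathcal Z})$. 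That tangent space is the common zero set of the differentials at $0$ of all polynomials in the vanishing ideal.

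Next, fix $C,C'\in\mathcal C_{a,b}$ and define the polynomial (in fact quadratic) map
\[
F_{C,C'}(\eta)=[C+\eta(C),\,C'+\eta(C')]\in M_m(\mathbb C).
\]
If $\Theta_m(\eta)\in\mathfrak C_m$, then $C+\eta(C)$ and $C'+\eta(C')$ are elements of a commuting space, so $F_{C,C'}(\eta)=0$. Thus every matrix entry of $F_{C,C'}$ lies in the vanishing ideal of $\Theta_m^{-1}(\mathfrak C_m\cap\mathcal U_{\mathcal Z})$. Since $C$ and $C'$ commute, $F_{C,C'}(0)=0$, and the linear part of $F_{C,C'}$ at $0$ is
\[
(dF_{C,C'})_0(\eta)=[\eta(C),C']+[C,\eta(C')].
\]
Hence every tangent vector $\eta\in T_{\mathcal C_{a,b}}\mathfrak C_m$ satisfies $[\eta(C),C']+[C,\eta(C')]=0$ for all $C,C'$, which is exactly the defining condition \eqref{eq:T-ab-equation} of $\mathcal T_{a,b}$.

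There is no real obstacle. The only point needing care is bookkeeping: the inclusion has to be read through the chart identification \eqref{eq:quotient-Grassmann-tangent}, and the tangent space is defined via the reduced vanishing ideal. The argument only needs the entries of $F_{C,C'}$ to lie in that ideal, not to generate it, and they do, since $F_{C,C'}$ vanishes set-theoretically on the locus. This is why we obtain only an inclusion rather than an equality.
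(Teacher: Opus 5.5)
Your proposal is correct and is essentially the paper's argument: both work in the graph chart at $\mathcal C_{a,b}$ with complement $\mathcal Z_{a,b}$, note that $[C+\eta(C),C'+\eta(C')]$ vanishes on the commuting locus, and use $[C,C']=0$ to read off the linear part $[\eta(C),C']+[C,\eta(C')]$. You state explicitly that the entries of $F_{C,C'}$ lie in the vanishing ideal, whereas the paper phrases the same step as a first-order expansion modulo $t^2$.
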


\begin{proof}
In the graph chart determined by
$M_m(\mathbb C)=\mathcal C_{a,b}\oplus\mathcal Z_{a,b}$, a first-order
deformation represented by
$\eta\in\operatorname{Hom}(\mathcal C_{a,b},\mathcal Z_{a,b})$ has elements
$C+t\eta(C)$ modulo $t^2$. For $C,C'\in\mathcal C_{a,b}$,
\[
[C+t\eta(C),C'+t\eta(C')]
=
t\bigl([\eta(C),C']+[C,\eta(C')]\bigr)
\pmod{t^2},
\]
because $[C,C']=0$. The differentials of the defining commuting equations
therefore vanish on every tangent vector to $\mathfrak C_m$, which is exactly
the asserted inclusion.
\end{proof}

\begin{corollary}\label{cor:ambient-tangent-injection}
Assume $2\leq k\leq n-2$ and let $(a,b)$ be one of the
choices in \eqref{eq:ab-choices}. The assignment in
Proposition~\ref{prop:tangent-splitting} is a linear
injection
\[
T_{\mathcal A_{k,a,b}}\mathfrak X_{n,k}
\hookrightarrow
\operatorname{Hom}(\mathcal E,\mathcal Q)\oplus\mathcal T_{a,b},
\qquad
\phi\longmapsto(L_\phi,\eta_\phi).
\]
Consequently,
\[
\dim T_{\mathcal A_{k,a,b}}\mathfrak X_{n,k}
\leq
k(n-k)+\dim\mathcal T_{a,b}.
\]
\end{corollary}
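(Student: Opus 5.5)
The plan is to check three things about the assignment $\phi\mapsto(L_\phi,\eta_\phi)$ of Proposition~\ref{prop:tangent-splitting}: that it is well defined with the stated target, that it is linear, and that it is injective. The dimension bound then follows from $\dim\operatorname{Hom}(\mathcal E,\mathcal Q)=k(n-k)$.

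\emph{Well-definedness.} Proposition~\ref{prop:tangent-splitting} produces, for each tangent vector $\phi$, a unique pair with $L_\phi\in\operatorname{Hom}(\mathcal E,\mathcal Q)$ and $\eta_\phi\in\mathcal T_{a,b}$. So the map lands in $\operatorname{Hom}(\mathcal E,\mathcal Q)\oplus\mathcal T_{a,b}$.

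\emph{Linearity.} I will use the explicit uniqueness formulas from the end of that proof. First,
\[
L_\phi=\phi_{21}(S(I_{\mathcal E},0,0)),
\]
and $\phi\mapsto\phi_{21}(S(I_{\mathcal E},0,0))$ is linear in $\phi$. Second,
\[
\eta_\phi(C)=\bigl((\phi-\delta_{L_\phi})(S(0,0,C))\bigr)_{22}.
\]
Here $L\mapsto\delta_L$, with $\delta_L(S)=\pi_{\mathcal W}([\widehat L,S])$, is linear in $L$. Composing, $\eta_\phi$ depends linearly on $\phi$. Alternatively, one can argue directly from uniqueness: if $\phi,\phi'$ are tangent vectors and $c\in\mathbb C$, then the pair $(L_\phi+cL_{\phi'},\eta_\phi+c\eta_{\phi'})$ satisfies the decomposition formula for $\phi+c\phi'$. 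This uses that the formula is linear in the pair and that $\mathcal T_{a,b}$ is a linear subspace. Since $T_{\mathcal A_{k,a,b}}\mathfrak X_{n,k}$ is a linear subspace, uniqueness then gives $(L_{\phi+c\phi'},\eta_{\phi+c\phi'})=(L_\phi,\eta_\phi)+c(L_{\phi'},\eta_{\phi'})$.

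\emph{Injectivity.} Suppose $L_\phi=0$ and $\eta_\phi=0$. The displayed formula of Proposition~\ref{prop:tangent-splitting} gives $\phi(S)=\pi_{\mathcal W}([0,S])+0=0$ for every $S\in\mathcal A_{k,a,b}$. Hence $\phi=0$ as an element of $\operatorname{Hom}(\mathcal A_{k,a,b},\mathcal W_{k,a,b})$, which is the identification \eqref{eq:Grassmann-tangent-chart}.

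\emph{Dimension count.} Injectivity gives
\[
\dim T_{\mathcal A_{k,a,b}}\mathfrak X_{n,k}\le\dim\operatorname{Hom}(\mathcal E,\mathcal Q)+\dim\mathcal T_{a,b}=k(n-k)+\dim\mathcal T_{a,b}.
\]

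No step here is a real obstacle, since the substantive work is already in Proposition~\ref{prop:tangent-splitting}. The only point needing care is linearity. The uniqueness statement alone does not formally give linearity unless one also notes that the tangent space is a vector space and that the defining formula is linear in the pair $(L,\eta)$; both facts are immediate.
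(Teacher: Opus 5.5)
Your proof is correct and follows the paper's argument. Linearity comes from the explicit formulas $L_\phi=\phi_{21}(S(I_{\mathcal E},0,0))$ and $\eta_\phi(C)=\bigl((\phi-\delta_{L_\phi})(S(0,0,C))\bigr)_{22}$, injectivity from the decomposition formula of Proposition~\ref{prop:tangent-splitting}, and the bound from $\dim\operatorname{Hom}(\mathcal E,\mathcal Q)=k(n-k)$. The paper attributes injectivity to ``the uniqueness assertion,'' but the operative fact is the existence half of the formula, which forces $\phi=0$ when $(L_\phi,\eta_\phi)=(0,0)$ — exactly the step you wrote.
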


\begin{proof}
Linearity follows from the construction of $L_\phi$ and $\eta_\phi$, and
injectivity is the uniqueness assertion in
Proposition~\ref{prop:tangent-splitting}. The dimension estimate follows from
$\dim\operatorname{Hom}(\mathcal E,\mathcal Q)=k(n-k)$.
\end{proof}

\begin{remark}
It remains to determine $\mathcal T_{a,b}$. When $m\geq4$,
Section~\ref{sec:m-ge-4} proves that every element of
$\mathcal T_{a,b}$ is induced by conjugation on the quotient. When
$m\in\{2,3\}$, Section~\ref{sec:m-2-3} shows that
$\mathcal T_{a,b}$ is the full tangent space to the smooth commuting locus
$\mathfrak C_m$ at $\mathcal C_{a,b}$.
\end{remark}

\section[The case n-k at least 4]{\texorpdfstring{The case $n-k\geq4$}{The case n-k at least 4}}\label{sec:m-ge-4}

In this section, assume $2\leq k\leq n-2$ and $m\geq4$, and fix
one of the choices $(a,b)$ in \eqref{eq:ab-choices}. By
Corollary~\ref{cor:ambient-tangent-injection}, it remains to determine the
space $\mathcal T_{a,b}$.

\subsection[Solving the equations for R, U, and V]{\texorpdfstring{Solving the equations for $R$, $U$, and $V$}{Solving the equations for R, U, and V}}

\begin{lemma}\label{lem:RUV-equations}
Assume $2\leq k\leq n-2$ and $m\geq4$, and let $(a,b)$ be one of
the choices in \eqref{eq:ab-choices}. Suppose that
\[
R\colon M_{a\times b}(\mathbb C)\to M_{b\times a}(\mathbb C)\quad 
U\colon M_{a\times b}(\mathbb C)\to M_a(\mathbb C),
\quad
V\colon M_{a\times b}(\mathbb C)\to M_b(\mathbb C)
\]
are linear maps satisfying \eqref{eq:R-left} and \eqref{eq:UV-identity} for all
$Y,Z\in M_{a\times b}(\mathbb C)$. Then $R=0$, and there exist
$N\in M_{b\times a}(\mathbb C)$ and a linear functional
$\tau\colon M_{a\times b}(\mathbb C)\to\mathbb C$ such that
\begin{equation}\label{eq:UV-solution}
U(Y)=-YN+\tau(Y)I_a,
\qquad
V(Y)=NY+\tau(Y)I_b.
\end{equation}
If, in addition,
\begin{equation}\label{eq:UV-trace}
\operatorname{tr}U(Y)+\operatorname{tr}V(Y)=0
\qquad(Y\in M_{a\times b}(\mathbb C)),
\end{equation}
then $\tau=0$.
\end{lemma}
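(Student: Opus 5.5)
The plan is to treat the two identities separately: \eqref{eq:R-left} only involves $R$, and \eqref{eq:UV-identity} only involves $U$ and $V$. Throughout we use matrix units $Y=E_{ij}$, $Z=E_{kl}$ in $M_{a\times b}(\mathbb C)$. The standing assumption $m\geq4$ gives $a,b\geq2$, and this is exactly what makes the coefficient comparisons below force the claimed rigidity.

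\emph{Step 1: $R=0$.} Apply \eqref{eq:R-left} with $Y=E_{ij}$ and $Z=E_{kl}$:
\[
E_{kl}R(E_{ij})=E_{ij}R(E_{kl}).
\]
The left side has nonzero rows only in row $k$, and the right side only in row $i$. Taking $k\neq i$ (possible since $a\geq2$), both sides vanish. Hence row $i$ of $E_{ij}R(E_{kl})$ is zero, that is, row $j$ of $R(E_{kl})$ is zero, for every $j$. Therefore $R(E_{kl})=0$ for all $k,l$, and so $R=0$. Only \eqref{eq:R-left} and $a\geq2$ are needed here, consistent with the hypotheses of the lemma.

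\emph{Step 2: solving for $U$ and $V$.} Set $Z=Y$ in \eqref{eq:UV-identity}; this gives nothing new, so polarization is not the route. Instead substitute matrix units. For $Y=E_{ij}$ and $Z=E_{kl}$ with $i\neq k$ and $j\neq l$, the identity reads
\[
U(E_{ij})E_{kl}-E_{kl}V(E_{ij})+E_{ij}V(E_{kl})-U(E_{kl})E_{ij}=0.
\]
Compare entries outside rows $i,k$ and columns $j,l$. This forces strong sparsity: the column $U(E_{ij})e_{k}$ vanishes outside row $i$, and similarly for the other terms. The conclusion I aim for is that
\[
U(E_{ij})=-E_{ij}N+\tau_{ij}I_a+(\text{terms supported in column } i \text{ or row } i),
\]
and then further comparisons pin down the remaining terms. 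A cleaner alternative I would try first is to define $\widetilde U(Y)=U(Y)+YN$ and $\widetilde V(Y)=V(Y)-NY$, where $N$ is chosen from the off-diagonal data, say $N_{jk}$ read off from the $(j,k)$ entry of $V(E_{kl})$ suitably normalized. The pair $(\widetilde U,\widetilde V)$ still satisfies \eqref{eq:UV-identity}, because the solutions in \eqref{eq:UV-solution} satisfy it identically (check: $-YNZ+ZNY-ZNY+YNZ=0$, and the $\tau$ terms cancel as $\tau(Y)Z-\tau(Y)Z+\tau(Z)Y-\tau(Z)Y$). One then shows that $\widetilde U(E_{ij})$ and $\widetilde V(E_{ij})$ are scalar with equal scalar $\tau_{ij}$.

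The key sub-identity for this is the following. Fix $Y$ and view \eqref{eq:UV-identity} as a statement about the map $Z\mapsto U(Y)Z-ZV(Y)$, which must equal $U(Z)Y-YV(Z)$. Taking $Y$ of full rank (a generic $Y$, rank $\min(a,b)$), the right side lies in the set $\{PY-YQ\}$. Using two generic $Y$'s and the fact that $a,b\geq2$, one deduces that $U(Y)$ and $V(Y)$ are determined up to a common scalar by a linear expression in $Y$. This is the step I expect to be the main obstacle: showing that $U(\cdot)$ has the rigid form $-YN+\tau(Y)I_a$ with a single $N$ independent of $Y$. The approach I plan is to extract $N$ from the coefficients of the $E_{ij}$ and check consistency across pairs $(i,j),(k,l)$ by using the four-index comparisons, which need $a,b\geq2$.

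\emph{Step 3: the trace condition.} Assume \eqref{eq:UV-trace}. Then
\[
0=\operatorname{tr}U(Y)+\operatorname{tr}V(Y)=-\operatorname{tr}(YN)+\operatorname{tr}(NY)+(a+b)\tau(Y)=m\,\tau(Y).
\]
Since $m\neq0$, this gives $\tau=0$.
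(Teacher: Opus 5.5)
\textbf{Verdict: genuine gap.} Steps 1 and 3 of your proposal are correct, and they coincide with the paper's argument. In Step 1, the row supports of $E_{kl}R(E_{ij})$ and $E_{ij}R(E_{kl})$ for $k\neq i$ kill $R$. In Step 3, $\operatorname{tr}U(Y)+\operatorname{tr}V(Y)=(a+b)\tau(Y)$. The gap is Step 2, which is the whole content of the lemma. You state the target form and call the existence of a single $N$ ``the main obstacle'', but you never establish it, and the tools you propose would not establish it:
\begin{enumerate}
\item Comparing entries outside rows $i,k$ and columns $j,l$ gives nothing. Each of the four terms is supported in row $i$, row $k$, column $j$ or column $l$, so every such entry is trivially zero.
\item Your sparsity claim is false. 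The $(p,l)$ entries with $p\notin\{i,k\}$ only show that column $k$ of $U(E_{ij})$ vanishes outside rows $i$ \emph{and} $k$. The $(k,l)$ entry gives $U(E_{ij})_{kk}=V(E_{ij})_{ll}$, which is $\tau_{ij}$ and need not vanish: the pair $U(Y)=\tau(Y)I_a$, $V(Y)=\tau(Y)I_b$ is a solution.
\item Most seriously, the ``key sub-identity'' for generic $Y$ is empty. If $\operatorname{rank}Y=\min(a,b)$, then $Y$ or $Y^{\mathsf T}$ is surjective. Hence $\{YQ\}$ or $\{PY\}$ is already all of $M_{a\times b}$, and the condition $U(Y)Z-ZV(Y)\in\{PY-YQ\}$ imposes nothing.
\item Subtracting the particular solution $(-YN,NY)$ is harmless, but it presupposes the $N$ whose existence is the point.
\end{enumerate}

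What is needed is the actual elimination. The paper takes the $(p,\gamma)$ entry of the identity for $Y=E_{i\alpha}$, $Z=E_{j\beta}$:
\[
u^{i\alpha}_{pj}\delta_{\gamma\beta}-\delta_{pj}v^{i\alpha}_{\beta\gamma}+\delta_{pi}v^{j\beta}_{\alpha\gamma}-u^{j\beta}_{pi}\delta_{\gamma\alpha}=0 .
\]
It then sets $p=j\neq i$ (using $a\geq2$), which expresses all of $V(E_{i\alpha})$ through $u^{i\alpha}_{jj}$ and $u^{j\beta}_{ji}$.
\begin{itemize}
\item Taking $\beta=\gamma\neq\alpha$ (using $b\geq2$) shows that $u^{i\alpha}_{jj}=v^{i\alpha}_{\beta\beta}$ is independent of $j\neq i$; this defines $\tau_{i\alpha}$.
\item Taking $\gamma=\alpha_0\neq\beta$ shows that $-u^{j\beta}_{ji}=v^{i\alpha_0}_{\beta\alpha_0}$ is independent of $j\neq i$ and of $\alpha_0$; this defines $n_{\beta i}$. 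This is exactly the consistency of $N$ that you left open.
\end{itemize}
Back-substitution then determines $U$ off column $i$, and the case $j=i$ determines column $i$.

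Alternatively, your structural idea does work if you use rank-one rather than full-rank $Y$. Write $e_i$, $f_\alpha$ for the standard basis vectors of $\mathbb C^a$, $\mathbb C^b$, and take $Y=e_i f_\alpha^{\mathsf T}$. The right side $U(Z)Y-YV(Z)$ then lies in $\{c f_\alpha^{\mathsf T}+e_i d^{\mathsf T}: c\in\mathbb C^a,\ d\in\mathbb C^b\}$, which is a proper subspace because $a,b\geq2$. Apply $x^{\mathsf T}(\,\cdot\,)w$ with $x_i=0$ and $w_\alpha=0$, and let $Z$ run over matrix units. This gives $U(Y)^{\mathsf T}x\otimes w=x\otimes V(Y)w$, hence
\[
U(E_{i\alpha})=\tau_{i\alpha}I_a+e_i g^{\mathsf T},
\qquad
V(E_{i\alpha})=\tau_{i\alpha}I_b+h f_\alpha^{\mathsf T}.
\]
Substituting these back into the identity forces $g^{\mathsf T}=-f_\alpha^{\mathsf T}N$ and $h=Ne_i$ for a single matrix $N$.
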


\begin{proof}
Write $E_{i\alpha}$ for the standard matrix units of
$M_{a\times b}(\mathbb C)$. Set
$Y=E_{i\alpha}$ and $Z=E_{j\beta}$ with $j\neq i$. In
\eqref{eq:R-left}, the left-hand side is supported only in row $j$, while the
right-hand side is supported only in row $i$. Hence both sides vanish. Since
$\beta$ is arbitrary, every row of $R(E_{i\alpha})$ vanishes. Thus
$R(E_{i\alpha})=0$ for all $i,\alpha$, and therefore $R=0$. Write
\[
U(E_{i\alpha})=(u^{i\alpha}_{pq}),
\qquad
V(E_{i\alpha})=(v^{i\alpha}_{\beta\gamma}).
\]
Substituting $Y=E_{i\alpha}$ and $Z=E_{j\beta}$ into
\eqref{eq:UV-identity} and taking the $(p,\gamma)$-entry gives
\begin{equation}\label{eq:coordinate-identity}
u^{i\alpha}_{pj}\delta_{\gamma\beta}
-\delta_{pj}v^{i\alpha}_{\beta\gamma}
+\delta_{pi}v^{j\beta}_{\alpha\gamma}
-u^{j\beta}_{pi}\delta_{\gamma\alpha}=0.
\end{equation}
Fix $j\neq i$ and put $p=j$. Then
\begin{equation}\label{eq:v-first}
v^{i\alpha}_{\beta\gamma}
=
u^{i\alpha}_{jj}\delta_{\gamma\beta}
-u^{j\beta}_{ji}\delta_{\gamma\alpha}.
\end{equation}
Choose $\beta_0\neq\alpha$ and put
$\beta=\gamma=\beta_0$ in \eqref{eq:v-first}. It follows that
\[
u^{i\alpha}_{jj}=v^{i\alpha}_{\beta_0\beta_0},
\]
so this scalar is independent of $j\neq i$. Define
\[
\tau_{i\alpha}=u^{i\alpha}_{jj}
\qquad(j\neq i).
\]
For fixed $\beta$ and $i$, choose $\alpha_0\neq\beta$, use
$Y=E_{i\alpha_0}$ in \eqref{eq:v-first}, and put
$\gamma=\alpha_0$. Then
\[
-u^{j\beta}_{ji}=v^{i\alpha_0}_{\beta\alpha_0},
\]
which is independent of $j\neq i$. Define
\[
n_{\beta i}=-u^{j\beta}_{ji}
\qquad(j\neq i).
\]
Equation \eqref{eq:v-first} becomes
\begin{equation}\label{eq:v-solution-units}
v^{i\alpha}_{\beta\gamma}
=
\tau_{i\alpha}\delta_{\beta\gamma}
+n_{\beta i}\delta_{\alpha\gamma}.
\end{equation}
Choose $\alpha\neq\beta$ and put $\gamma=\alpha$ in
\eqref{eq:coordinate-identity}. Using
\eqref{eq:v-solution-units} gives
\[
u^{j\beta}_{pi}
=-\delta_{pj}n_{\beta i}+\delta_{pi}\tau_{j\beta}
\qquad(j\neq i).
\]
After relabelling,
\begin{equation}\label{eq:u-off-column}
u^{i\alpha}_{pq}
=-\delta_{pi}n_{\alpha q}+\delta_{pq}\tau_{i\alpha}
\qquad(q\neq i).
\end{equation}
It remains to determine the column $q=i$. Put $j=i$ in
\eqref{eq:coordinate-identity}. If $p\neq i$, choose
$\gamma=\beta\neq\alpha$; then $u^{i\alpha}_{pi}=0$. If $p=i$, choose again
$\beta\neq\alpha$ and substitute \eqref{eq:v-solution-units}. The
coefficients of $\delta_{\gamma\beta}$ and
$\delta_{\gamma\alpha}$ give
\[
u^{i\alpha}_{ii}=\tau_{i\alpha}-n_{\alpha i}.
\]
Together with \eqref{eq:u-off-column}, this yields
\begin{equation}\label{eq:u-solution-units}
u^{i\alpha}_{pq}
=-\delta_{pi}n_{\alpha q}+\delta_{pq}\tau_{i\alpha}
\end{equation}
for all $p,q$. Let
\[
N=(n_{\alpha q})\in M_{b\times a}(\mathbb C),
\qquad
\tau(E_{i\alpha})=\tau_{i\alpha},
\]
and extend $\tau$ linearly. Equations
\eqref{eq:v-solution-units} and \eqref{eq:u-solution-units} give
\eqref{eq:UV-solution}. Finally, for every matrix unit,
\[
\operatorname{tr}U(E_{i\alpha})
+
\operatorname{tr}V(E_{i\alpha})
=
\bigl(-n_{\alpha i}+a\tau_{i\alpha}\bigr)
+
\bigl(n_{\alpha i}+b\tau_{i\alpha}\bigr)=(a+b)\tau_{i\alpha}.
\]
Thus \eqref{eq:UV-trace} forces every $\tau_{i\alpha}$ to vanish.
\end{proof}

\subsection[The space T(a,b)]{\texorpdfstring{The space $\mathcal T_{a,b}$}{The space T(a,b)}}

For $N\in M_{b\times a}(\mathbb C)$, define
\[
\eta_N
\left(
\begin{pmatrix}
\lambda I_a&D\\
0&\lambda I_b
\end{pmatrix}
\right)
=
\begin{pmatrix}
-DN&0\\
0&ND
\end{pmatrix}.
\]
The identity $\operatorname{tr}(DN)=\operatorname{tr}(ND)$ shows that
$\eta_N$ takes values in $\mathcal Z_{a,b}$. It is the infinitesimal
conjugation direction induced by
$\left(\begin{smallmatrix}0&0\\N&0\end{smallmatrix}\right)$ on
$\mathbb C^a\oplus\mathbb C^b$.

\begin{proposition}\label{prop:T-ab-rigid}
Assume $2\leq k\leq n-2$ and $m\geq4$, and let $(a,b)$ be one of
the choices in \eqref{eq:ab-choices}. Then
\[
\mathcal T_{a,b}
=
\{\eta_N:N\in M_{b\times a}(\mathbb C)\}.
\]
In particular, $\dim\mathcal T_{a,b}=ab$.
\end{proposition}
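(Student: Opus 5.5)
The plan is to show both inclusions: first that every $\eta\in\mathcal T_{a,b}$ has the form $\eta_N$, then that every $\eta_N$ lies in $\mathcal T_{a,b}$, and finally that $N\mapsto\eta_N$ is injective.

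For the forward inclusion, let $\eta\in\mathcal T_{a,b}$. Proposition~\ref{prop:quotient-identities} writes $\eta$ in the form \eqref{eq:eta-URV} with linear maps $R$, $U$, $V$ of $D$ alone. These maps satisfy \eqref{eq:R-left}--\eqref{eq:UV-identity}, and the trace condition \eqref{eq:UV-trace} holds because $\eta$ takes values in $\mathcal Z_{a,b}$. Since $m\geq4$, Lemma~\ref{lem:RUV-equations} applies. It gives $R=0$, $\tau=0$, and an $N\in M_{b\times a}(\mathbb C)$ with $U(D)=-DN$ and $V(D)=ND$. Comparing with the definition of $\eta_N$ shows $\eta=\eta_N$.

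For the reverse inclusion, fix $N$ and check directly that $\eta_N$ satisfies \eqref{eq:T-ab-equation}. Because $\eta_N$ kills $I_m$ and $I_m$ is central, it suffices to test $C=C_Y$ and $C'=C_Z$ with $C_Y,C_Z$ strictly upper block triangular. Then $[\eta_N(C_Y),C_Z]$ has only an upper-right block, namely $-YNZ-ZNY$. Likewise $[C_Y,\eta_N(C_Z)]$ has upper-right block $YNZ+ZNY$. These cancel. Equivalently, one can note that $\eta_N$ is the derivative of conjugation by $\exp\bigl(t\left(\begin{smallmatrix}0&0\\N&0\end{smallmatrix}\right)\bigr)$ projected onto $\mathcal Z_{a,b}$. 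Since the $\mathcal C_{a,b}$-component of that derivative is itself in $\mathcal C_{a,b}$, which is commutative, it satisfies the linearized equation automatically. I would present the direct block computation. The values lie in $\mathcal Z_{a,b}$ by the trace identity already noted.

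For the dimension count, the map $N\mapsto\eta_N$ is linear. It is injective: if $ND=0$ for every $D\in M_{a\times b}(\mathbb C)$, then taking matrix units gives $N=0$. Hence $\dim\mathcal T_{a,b}=ab$.

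I expect no real obstacle. All the substantive work is in Lemma~\ref{lem:RUV-equations}, which is where $m\geq4$ is needed: the lemma must choose $\beta_0\neq\alpha$ and $j\neq i$, so it needs $a,b\geq2$. The only point needing care is confirming the hypotheses of that lemma, namely extracting the trace condition from $\eta(C)\in\mathcal Z_{a,b}$, and checking the sign conventions in the cancellation above.
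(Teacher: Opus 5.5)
Your proposal is correct and follows the paper's proof. The forward inclusion goes through Proposition~\ref{prop:quotient-identities} and Lemma~\ref{lem:RUV-equations}, with the trace condition coming from $\eta(C)\in\mathcal Z_{a,b}$; the converse holds because $\eta_N$ is an infinitesimal conjugation; and $\dim\mathcal T_{a,b}=ab$ follows from injectivity of $N\mapsto\eta_N$. Your explicit block check, showing that the upper-right blocks $-YNZ-ZNY$ and $YNZ+ZNY$ cancel, is a valid and slightly more concrete substitute for the paper's one-line appeal to differentiating conjugation.
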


\begin{proof}
Let $\eta\in\mathcal T_{a,b}$. Proposition~\ref{prop:quotient-identities}
produces maps $R,U,V$ satisfying the hypotheses of
Lemma~\ref{lem:RUV-equations}, including the trace condition \eqref{eq:UV-trace}. The lemma
gives a unique $N\in M_{b\times a}(\mathbb C)$ such that
\[
R=0,
\qquad
U(D)=-DN,
\qquad
V(D)=ND.
\]
Thus $\eta=\eta_N$. Conversely, each $\eta_N$ is obtained by differentiating
conjugation and therefore satisfies \eqref{eq:T-ab-equation}. The assignment
$N\mapsto\eta_N$ is injective, so $\dim\mathcal T_{a,b}=ab$.
\end{proof}

\subsection[The tangent space at A(k,a,b)]{\texorpdfstring{The tangent space at $\mathcal A_{k,a,b}$}{The tangent space at A(k,a,b)}}

\begin{theorem}\label{thm:tangent-equality-mge4}
Assume $2\leq k\leq n-2$ and $m\geq4$, and let $(a,b)$ be one of
the choices in \eqref{eq:ab-choices}. Then
\[
T_{\mathcal A_{k,a,b}}\mathfrak X_{n,k}
=
T_{\mathcal A_{k,a,b}}\mathcal O_{k,a,b}.
\]
\end{theorem}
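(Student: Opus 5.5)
The plan is a dimension count, sandwiching $T_{\mathcal A_{k,a,b}}\mathfrak X_{n,k}$ between the orbit tangent space and the bound coming from Corollary~\ref{cor:ambient-tangent-injection}.

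\emph{Lower bound.} By Proposition~\ref{prop:orbit-Akab}, $\mathcal O_{k,a,b}$ is a smooth closed subvariety of $\mathfrak X_{n,k}$ passing through $\mathcal A_{k,a,b}$. Zariski tangent spaces are functorial for closed embeddings, so
\[
T_{\mathcal A_{k,a,b}}\mathcal O_{k,a,b}\subseteq T_{\mathcal A_{k,a,b}}\mathfrak X_{n,k},
\]
both viewed inside $\operatorname{Hom}(\mathcal A_{k,a,b},\mathcal W_{k,a,b})$ via \eqref{eq:Grassmann-tangent-chart}. Proposition~\ref{prop:orbit-Akab} also gives the dimension of the left-hand side:
\[
\dim T_{\mathcal A_{k,a,b}}\mathcal O_{k,a,b}=k(n-k)+ab.
\]

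\emph{Upper bound.} Corollary~\ref{cor:ambient-tangent-injection} gives
\[
\dim T_{\mathcal A_{k,a,b}}\mathfrak X_{n,k}\le k(n-k)+\dim\mathcal T_{a,b}.
\]
Since $m\ge4$, Proposition~\ref{prop:T-ab-rigid} gives $\dim\mathcal T_{a,b}=ab$. Hence $\dim T_{\mathcal A_{k,a,b}}\mathfrak X_{n,k}\le k(n-k)+ab$.

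\emph{Conclusion.} We have a subspace whose dimension is at least that of the ambient finite-dimensional space. The two therefore coincide, which is the asserted equality.

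As a sanity check, and as an alternative route if one wants an explicit description, I would also verify the inclusion $T\mathfrak X\subseteq T\mathcal O$ directly. Take $\phi\in T_{\mathcal A_{k,a,b}}\mathfrak X_{n,k}$. Proposition~\ref{prop:tangent-splitting} writes $\phi=\delta_{L_\phi}+(0\oplus\eta_\phi)$, and Proposition~\ref{prop:T-ab-rigid} gives $\eta_\phi=\eta_N$ for some $N\in M_{b\times a}(\mathbb C)$. One then checks that $\delta_{L,M,N}$, with $(L,M)$ the two block rows of $L_\phi$, equals $\phi$. This works because $\pi_{\mathcal W}([X_{0,0,N},S])$ has lower-right block $\eta_N(C)$ and vanishing lower-left block. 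Indeed, for $S$ with blocks $A,B,C,D,\lambda$, the commutator $[X_{0,0,N},S]$ has its $(2,1)$-, $(3,1)$- and $(3,2)$-blocks equal to zero except possibly through $N$ times the $(2,1)$-block of $S$, which is zero. Its $(2,2)$-block is $-DN$ and its $(3,3)$-block is $ND$.

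The only potential subtlety is in the lower bound: the identification of $T\mathcal O$ inside $T\mathfrak X$ must use the same graph chart for both spaces, and the tangent spaces must be taken in the reduced sense. Both points are fine here, because Proposition~\ref{prop:orbit-Akab-tangent} is stated in exactly that chart. I therefore expect no real obstacle; the substantive work was done in Lemma~\ref{lem:RUV-equations}.
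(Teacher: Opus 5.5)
Your proof is correct, but your main argument takes a different route from the paper's. The paper argues element by element, exactly along your ``sanity check'':
\begin{itemize}
\item it splits $\phi$ by Proposition~\ref{prop:tangent-splitting};
\item it replaces $\eta_\phi$ by $\eta_N$ via Proposition~\ref{prop:T-ab-rigid} and concludes $\phi=\delta_{L,M,N}$;
\item it notes that every $\delta_{L,M,N}$ is tangent because conjugation preserves $\mathfrak X_{n,k}$;
\item it reads off equality from the explicit description in Proposition~\ref{prop:orbit-Akab-tangent}.
\end{itemize}

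Your primary argument instead sandwiches dimensions. The closed embedding gives $T\mathcal O_{k,a,b}\subseteq T\mathfrak X_{n,k}$, smoothness gives $\dim T\mathcal O_{k,a,b}=k(n-k)+ab$, and Corollary~\ref{cor:ambient-tangent-injection} gives the matching upper bound.

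This spares you the block identity $\delta_{L_\phi}+(0\oplus\eta_N)=\delta_{L,M,N}$. The paper asserts that identity with a bare ``Hence''. You verify it correctly: the lower-left blocks of $[X_{0,0,N},S]$ vanish, and its lower-right block is $\operatorname{diag}(-DN,ND)$, which already lies in $\mathcal Z_{a,b}$ since the traces cancel. Your route also needs only the inclusion $\mathcal T_{a,b}\subseteq\{\eta_N\}$. And it yields directly the dimension equality that Proposition~\ref{prop:model-component-mge4} later consumes.

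The paper's route, in exchange, gives the explicit inverse of $\phi\mapsto(L_\phi,\eta_\phi)$: it names the conjugation direction $(L,M,N)$ that produces each tangent vector. The two routes rest on the same inputs, since Corollary~\ref{cor:ambient-tangent-injection} already relies on Proposition~\ref{prop:orbit-Akab-tangent} to know that $\delta_{L_\phi}$ is tangent.
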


\begin{proof}
Let $\phi\in T_{\mathcal A_{k,a,b}}\mathfrak X_{n,k}$. By
Proposition~\ref{prop:tangent-splitting}, write
\[
L_\phi=
\begin{pmatrix}
L\\
M
\end{pmatrix}
\colon\mathcal E\longrightarrow\mathcal F\oplus\mathcal G
\]
and let $\eta_\phi\in\mathcal T_{a,b}$ be the residual quotient
deformation. Proposition~\ref{prop:T-ab-rigid} gives a unique
$N\in M_{b\times a}(\mathbb C)$ such that $\eta_\phi=\eta_N$. Hence
$\phi=\delta_{L,M,N}$. Conversely, conjugation preserves
$\mathfrak X_{n,k}$, so every $\delta_{L,M,N}$ is tangent. The equality with
the orbit tangent space follows from
Proposition~\ref{prop:orbit-Akab-tangent}.
\end{proof}

\section[The cases n-k=2 and n-k=3]{\texorpdfstring{The cases $n-k=2$ and $n-k=3$}{The cases n-k=2 and n-k=3}}\label{sec:m-2-3}

In this section, assume $2\leq k\leq n-2$ and
$m\in\{2,3\}$, and fix one of the choices $(a,b)$ in
\eqref{eq:ab-choices}.
In these dimensions, not every tangent vector comes from conjugation. We
first give an example, then compute the tangent space of $\mathfrak C_m$, and
finally define the families that contain these additional tangent directions.

\subsection{A tangent vector not induced by conjugation}

\begin{remark}[A tangent vector not induced by conjugation]
Already for $m=2$, varying the commuting subspace on the quotient produces tangent directions
that are not induced by conjugation. Let
\[
\mathcal C_0=\operatorname{span}(I_2,E_{12})\subseteq M_2(\mathbb C)
\]
and
\[
\mathcal C_t
=
\operatorname{span}(I_2,E_{12}+tE_{21}).
\]
Each $\mathcal C_t$ is commuting, so the derivative at $t=0$ is a tangent
direction in $T_{\mathcal C_0}\mathfrak C_2$ represented by
$E_{21}$ modulo $\mathcal C_0$. On the other hand, for
$X=\left(\begin{smallmatrix}\alpha&\beta\\\gamma&\delta\end{smallmatrix}\right)$,
\[
[X,E_{12}]
=
\begin{pmatrix}
-\gamma&\alpha-\delta\\
0&\gamma
\end{pmatrix},
\]
which has zero $(2,1)$-entry even modulo $\mathcal C_0$. Thus this tangent
direction is not induced by conjugation. This already shows, for $m=2$, that
a single conjugacy orbit cannot account for all tangent directions. In the
remainder of this section we therefore allow the commuting subspace induced on
the quotient to vary.
\end{remark}

\subsection[Commuting subspaces of M2 and M3]{\texorpdfstring{Commuting subspaces of $M_2(\mathbb C)$ and $M_3(\mathbb C)$}{Commuting subspaces of M2 and M3}}

\begin{proposition}\label{prop:Cm-geometry}
Assume $2\leq k\leq n-2$ and $m\in\{2,3\}$. If $m=2$, then
\[
\mathfrak C_m
\cong
\operatorname{Gr}(1,\mathfrak{sl}_2)
=
\mathbb P(\mathfrak{sl}_2)
\cong
\mathbb P^2.
\]
If $m=3$, then
\[
\mathfrak C_m
\cong
\left\{
\mathfrak a\in\operatorname{Gr}(2,\mathfrak{sl}_3):
[\mathfrak a,\mathfrak a]=0
\right\}.
\]
Consequently, $\mathfrak C_m$ is smooth and irreducible, of dimension $2$
when $m=2$ and of dimension $6$ when $m=3$.
\end{proposition}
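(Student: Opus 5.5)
The first step is to reduce to traceless parts. Every $\mathcal C\in\mathfrak C_m$ contains $I_m$. Indeed, $\mathcal C+\mathbb C I_m$ is again commuting, so by the bound in Theorem~\ref{thm:commuting-classification} and maximality of $\dim\mathcal C=s_m$ we get $I_m\in\mathcal C$. Hence the map
\[
\mathcal C\longmapsto\mathcal C\cap\mathfrak{sl}_m
\]
sends $\mathfrak C_m$ to the set of $(s_m-1)$-dimensional subspaces $\mathfrak a\subseteq\mathfrak{sl}_m$ with $[\mathfrak a,\mathfrak a]=0$. Its inverse is $\mathfrak a\mapsto\mathfrak a\oplus\mathbb C I_m$. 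Both maps are morphisms between the relevant Grassmannians: the first is the restriction of the closed embedding $\operatorname{Gr}(s_m-1,\mathfrak{sl}_m)\to\operatorname{Gr}(s_m,M_m)$, $\mathfrak a\mapsto\mathfrak a\oplus\mathbb C I_m$, whose image is the Schubert-type locus of spaces containing $I_m$. So we get an isomorphism of reduced varieties. Since $s_2=2$ and $s_3=3$, we obtain $\operatorname{Gr}(1,\mathfrak{sl}_2)$ for $m=2$, where every line is abelian. For $m=3$ we obtain the locus of abelian $2$-planes in $\mathfrak{sl}_3$. This proves the two displayed identifications.

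For smoothness and irreducibility when $m=2$, there is nothing more to do: $\mathbb P^2$ is smooth and irreducible of dimension $2$.

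For $m=3$ the plan is to realize $\mathfrak C_3$ as the image of a smooth irreducible variety and then verify smoothness by a tangent computation. Every $2$-dimensional abelian subspace of $\mathfrak{sl}_3$ is, by Theorem~\ref{thm:commuting-classification}\eqref{item:C-m3} (after adding $I_3$), conjugate to one of five traceless normal forms. These are the traceless parts of $\mathcal C_{1,2}$, $\mathcal C_{2,1}$, the diagonal algebra, the algebra $\{\alpha I+xE_{12}+\beta E_{33}\}$, and $\mathbb C[E_{12}+E_{23}]$. The diagonal (Cartan) orbit is $\operatorname{GL}_3/N(T)$, of dimension $9-3=6$. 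I expect the other four orbits to lie in the closure of the Cartan orbit, and to have dimension at most $6$. Concretely, I would exhibit the regular-element degenerations: the centralizer of a regular semisimple element degenerates to the centralizer of a regular nilpotent, and to those of subregular elements. I would also use the fact that the map sending a regular element $x\in\mathfrak{sl}_3$ to its centralizer, which is $2$-dimensional and abelian, has image containing the Cartan, $\mathbb C[E_{12}+E_{23}]$ and $\{\alpha I+xE_{12}+\beta E_{33}\}$-type orbits. For $\mathcal C_{1,2}$ and $\mathcal C_{2,1}$, which contain no regular element, I would write explicit one-parameter limits of tori, for instance $\mathbb P$-limits of $\operatorname{span}(\operatorname{diag}(2,-1,-1)+\dots)$. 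This gives irreducibility with $\dim\mathfrak C_3=6$, since the set is the closure of the $6$-dimensional Cartan orbit.

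Smoothness is then the main obstacle. By homogeneity it suffices to check that the Zariski tangent space has dimension $6$ at one point of each closed orbit. The closed orbits are those of $\mathcal C_{1,2}$ and $\mathcal C_{2,1}$, since every closed orbit lies in the closure of every other. Transposition swaps these two orbits, so one check suffices. The tangent space there is contained in $\mathcal T_{1,2}$ (by the argument of Proposition~\ref{prop:quotient-tangent-inclusion}, now with equality since the defining equations are exactly the commutator equations and we bound the scheme tangent space). I would compute $\mathcal T_{1,2}$ directly from Proposition~\ref{prop:quotient-identities} with $a=1$, $b=2$. Here $D$ is a $1\times2$ row, $R(D)$ is a $2\times1$ column with $ZR(Y)=YR(Z)$ and $R(Y)Z=R(Z)Y$, and $U,V$ satisfy \eqref{eq:UV-identity} with the trace condition. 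Solving these small linear systems by hand should give $\dim\mathcal T_{1,2}=6$, which I hope to confirm rather than merely bound. If the scheme-theoretic tangent space came out larger, I would instead compute the tangent space of the reduced variety via explicit local parametrizations near $\mathcal C_{1,2}$, using the family of centralizers of regular elements.
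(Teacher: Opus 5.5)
Your reduction $\mathcal C\mapsto\mathcal C\cap\mathfrak{sl}_m$ and the case $m=2$ are the same as in the paper. The difference is entirely in the case $m=3$: the paper cites Iliev--Manivel for smoothness, irreducibility and $\dim=6$ of the abelian-plane locus in $\operatorname{Gr}(2,\mathfrak{sl}_3)$, whereas you prove these facts directly, and your route works.

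\emph{Irreducibility and dimension.} Theorem~\ref{thm:commuting-classification} makes $\mathfrak C_3$ a union of five orbits, of dimensions $6,5,4,2,2$. Continuity of $x\mapsto\mathfrak z(x)$ on the regular locus puts the centralizer orbits (those of $\mathbb C[E_{12}+E_{23}]$ and $\operatorname{span}(I_3,E_{12},E_{33})$) in the closure of the Cartan orbit. To reach the two remaining orbits, conjugate $\mathbb C[E_{12}+E_{23}]$ by $\operatorname{diag}(1,t,1)$. This gives $\operatorname{span}(I_3,E_{12}+t^2E_{23},E_{13})$, which tends to $\mathcal C_{1,2}$ as $t\to0$ and to $\mathcal C_{2,1}$ as $t\to\infty$. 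Hence $\mathfrak C_3$ is the closure of a $6$-dimensional orbit, so it is irreducible of dimension $6$.

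\emph{Smoothness.} The singular locus is closed and $\operatorname{GL}_3$-stable, so if nonempty it contains a closed orbit, i.e.\ one of the two projective orbits $\mathbb P^2$ and $\check{\mathbb P}^2$. At $\mathcal C_{1,2}$, Proposition~\ref{prop:quotient-tangent-inclusion} together with $\dim\mathcal T_{1,2}=6$ gives $\dim T\le6=\dim_p\mathfrak C_3$. The count $\dim\mathcal T_{1,2}=6$ is exactly Proposition~\ref{prop:T-ab-dimension}, which is proved without using the present statement, so there is no circularity. Transposition then handles $\mathcal C_{2,1}$.

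\emph{Comparison.} Your version is self-contained. It also shows that the paper's later identity $T_{\mathcal C_{a,b}}\mathfrak C_m=\mathcal T_{a,b}$ is really what drives smoothness, rather than a consequence of it. The citation buys brevity.

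\emph{Slips to fix.}
\begin{itemize}
\item Centralizers of subregular elements are not $2$-dimensional, so drop them from your degeneration list.
\item ``Every closed orbit lies in the closure of every other'' is not a valid reason for identifying the closed orbits. Use instead that the other stabilizers have dimension $3,4,5<6=\dim$ Borel, so they are not parabolic, or use the explicit degenerations above.
\item The closed embedding you describe, $\mathfrak a\mapsto\mathfrak a\oplus\mathbb C I_m$, is the inverse map, not the first one.
\end{itemize}
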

\begin{proof}
By Theorem~\ref{thm:commuting-classification}, every
commuting subspace of $M_m(\mathbb C)$ has dimension at most $s_m$. If
$I_m\notin\mathcal C\in\mathfrak C_m$, then
$\mathcal C\cap\mathbb C I_m=0$, and hence
\[
\dim(\mathcal C+\mathbb C I_m)
=
\dim\mathcal C+1
=
s_m+1.
\]
Since $I_m$ commutes with every matrix,
$\mathcal C+\mathbb C I_m$ would still be a commuting subspace, contradicting
the maximal dimension bound. Thus every point of $\mathfrak C_m$ contains the
fixed line $\mathbb C I_m$. Use the fixed splitting
\[
M_m(\mathbb C)=\mathbb C I_m\oplus\mathfrak{sl}_m.
\]
If $\mathcal C\in\mathfrak C_m$, then $\mathbb C I_m\subseteq\mathcal C$,
and the splitting gives
\[
\mathcal C
=
\mathbb C I_m\oplus
(\mathcal C\cap\mathfrak{sl}_m).
\]
In particular,
\[
\dim(\mathcal C\cap\mathfrak{sl}_m)=s_m-1.
\]
The Schubert subvariety of $s_m$-planes containing $\mathbb C I_m$ is
canonically isomorphic to
$\operatorname{Gr}(s_m-1,M_m/\mathbb C I_m)$, and the displayed splitting
identifies the quotient with $\mathfrak{sl}_m$. Under this identification, the
mutually inverse maps are
\[
\mathcal C\longmapsto\mathcal C\cap\mathfrak{sl}_m,
\qquad
\mathfrak a\longmapsto\mathbb C I_m\oplus\mathfrak a.
\]
It remains to translate the commuting condition under these maps. If
$A,B\in\mathfrak a$ and $\lambda,\mu\in\mathbb C$, then
\[
[\lambda I_m+A,\mu I_m+B]=[A,B],
\]
since scalar matrices commute with everything. Hence
$\mathbb C I_m\oplus\mathfrak a$ is commuting if and only if
$\mathfrak a$ is abelian. Therefore the preceding maps give an algebraic
isomorphism
\[
\mathfrak C_m
\cong
\left\{
\mathfrak a\in\operatorname{Gr}(s_m-1,\mathfrak{sl}_m):
[\mathfrak a,\mathfrak a]=0
\right\}.
\]

For $m=2$, one has $s_2-1=1$, and every line in
$\mathfrak{sl}_2$ is abelian. Thus
\[
\mathfrak C_2
\cong
\operatorname{Gr}(1,\mathfrak{sl}_2)
=
\mathbb P(\mathfrak{sl}_2)
\cong
\mathbb P^2,
\]
since $\dim\mathfrak{sl}_2=3$. In particular, $\mathfrak C_2$ is smooth and
irreducible of dimension $2$.

For $m=3$, one has $s_3-1=2$, so the preceding identification gives
\[
\mathfrak C_3
\cong
\left\{
\mathfrak a\in\operatorname{Gr}(2,\mathfrak{sl}_3):
[\mathfrak a,\mathfrak a]=0
\right\}.
\]
Iliev and Manivel prove that this locus is smooth, irreducible, and
six-dimensional
\cite[Section~1.3, Theorem~3.11, and Proposition~4.1]{IlievManivel}.
\end{proof}

\subsection[Tangent spaces at C(a,b)]{\texorpdfstring{Tangent spaces at $\mathcal C_{a,b}$}{Tangent spaces at C(a,b)}}

\begin{proposition}\label{prop:T-ab-dimension}
Assume $2\leq k\leq n-2$ and $m\in\{2,3\}$, and let $(a,b)$ be
one of the choices in \eqref{eq:ab-choices}. Then
\[
\dim\mathcal T_{a,b}
=
\begin{cases}
2,&m=2,\\
6,&m=3.
\end{cases}
\]
\end{proposition}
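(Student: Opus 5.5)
The plan is to reduce $\mathcal T_{a,b}$ to the linear data $(R,U,V)$ of Proposition~\ref{prop:quotient-identities} and then count dimensions directly. The upper bound comes from solving the identities; the lower bound comes from Proposition~\ref{prop:quotient-tangent-inclusion} together with Proposition~\ref{prop:Cm-geometry}.

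\emph{Reduction to $(R,U,V)$.} First I note that Proposition~\ref{prop:quotient-identities} is in fact an equivalence. Suppose $\eta\in\operatorname{Hom}(\mathcal C_{a,b},\mathcal Z_{a,b})$ satisfies $\eta(I_m)=0$ and is given by $(U,R,V)$ as in \eqref{eq:eta-URV}. Expanding $[\eta(C_Y),C_Z]+[C_Y,\eta(C_Z)]$ in blocks gives
\[
\begin{pmatrix}
YR(Z)-ZR(Y)&U(Y)Z-ZV(Y)+YV(Z)-U(Z)Y\\
0&R(Y)Z-R(Z)Y
\end{pmatrix}.
\]
Terms involving $I_m$ drop out, since $\eta(I_m)=0$ and $I_m$ is central. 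Hence $\mathcal T_{a,b}$ is linearly isomorphic to the space of triples $(R,U,V)$ satisfying \eqref{eq:R-left}, \eqref{eq:R-right}, \eqref{eq:UV-identity} and the trace condition.

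\emph{Lower bound.} By Proposition~\ref{prop:quotient-tangent-inclusion}, $T_{\mathcal C_{a,b}}\mathfrak C_m\subseteq\mathcal T_{a,b}$. By Proposition~\ref{prop:Cm-geometry}, $\mathfrak C_m$ is smooth of dimension $2$ for $m=2$ and $6$ for $m=3$. Hence $\dim\mathcal T_{a,b}\geq 2$, respectively $\geq 6$, and only upper bounds remain.

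\emph{Case $m=2$.} Here $a=b=1$ and all blocks are scalars. The trace condition forces $U=-V$. Then \eqref{eq:R-left}, \eqref{eq:R-right} and \eqref{eq:UV-identity} hold automatically, so the solutions are parametrized by the two scalars $R(1)$ and $V(1)$. This gives $\dim\mathcal T_{1,1}=2$; the upper bound is immediate here.

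\emph{Case $m=3$, $(a,b)=(1,2)$.} Now $Y$ is a row vector in $\mathbb C^{1\times 2}$, $R(Y)$ is a column, $U(Y)$ is a scalar and $V(Y)\in M_2$. Apply \eqref{eq:R-right} with $Y=e_1$, $Z=e_2$. It states $R(e_1)e_2=R(e_2)e_1$, an equality of rank-$\leq1$ matrices supported in different columns, so $R(e_1)=R(e_2)=0$ and $R=0$. The unknowns $(U,V)$ then have $2+8=10$ parameters. The left side of \eqref{eq:UV-identity} is alternating in $(Y,Z)$, so it reduces to the single vector equation for $(Y,Z)=(e_1,e_2)$, which gives at most $2$ scalar equations; the trace condition contributes at most $2$ more.

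\emph{Case $m=3$, $(a,b)=(2,1)$.} This is the mirror image: \eqref{eq:R-left} with $Y=e_2$, $Z=e_1$ gives $R=0$ by comparing rows. The parameter count is again $8+2=10$, with at most $2+2$ equations.

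\emph{Main obstacle.} For $m=3$ I need the equality $\dim\mathcal T_{a,b}=6$, that is, that these four linear equations are independent. Counting only gives $\dim\geq 6$, which the smoothness argument already supplies, so the real task is to show the coefficient matrix has rank $4$.

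I would check this directly on the coordinates. For $(1,2)$, write $u_\alpha=U(e_\alpha)$ and $V_\alpha=V(e_\alpha)$. The identity at $(e_1,e_2)$ reads
\[
u_1e_2-e_2V_1+e_1V_2-u_2e_1=0.
\]
Its two components are $-(V_1)_{21}+(V_2)_{11}-u_2=0$ and $u_1-(V_1)_{22}+(V_2)_{12}=0$. The off-diagonal entries $(V_1)_{21}$ and $(V_2)_{12}$ occur only in these two equations and not in the trace equations $u_\alpha+\operatorname{tr}V_\alpha=0$. The two trace equations are independent of each other because of $u_1$ and $u_2$. Hence the rank is $4$ and $\dim\mathcal T_{1,2}=10-4=6$.

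The case $(2,1)$ is identical after transposing roles, or can be deduced via the isomorphism induced by $C\mapsto JC^{\mathsf T}J$ with $J$ the antidiagonal permutation, which carries $\mathcal C_{1,2}$ to $\mathcal C_{2,1}$. Combined with the lower bound, this gives exactly the claimed dimensions, and it also shows $\mathcal T_{a,b}=T_{\mathcal C_{a,b}}\mathfrak C_m$.
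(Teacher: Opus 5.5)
Your proof is correct and follows the paper's route: reduce to the triple $(R,U,V)$, kill $R$ with \eqref{eq:R-right}, count the remaining linear conditions, and pass between $(1,2)$ and $(2,1)$ by transpose-and-block-swap. The only real difference is in the count for $m=3$: you check directly that the four equations on ten unknowns have rank $4$, whereas the paper substitutes $\Omega(y)=V(y)-\nu(y)I_2$ and identifies the solution space with $\operatorname{Hom}(\operatorname{Sym}^2W,W)$. Your lower bound via smoothness of $\mathfrak C_m$ is not needed, since the rank computation already gives the exact dimension. Dropping it also keeps this proposition independent of the Iliev--Manivel input, which the paper only brings in afterwards when combining the two results.
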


\begin{proof}
By the observation following \eqref{eq:T-ab-equation}, every
$\eta\in\mathcal T_{a,b}$ satisfies $\eta(I_m)=0$.

If $m=2$, then \eqref{eq:ab-choices} gives $(a,b)=(1,1)$. The space
$M_{1\times1}(\mathbb C)$ has dimension one, so the alternating bilinear equation
\eqref{eq:T-ab-equation} imposes no further condition. Since
\[
\dim\mathcal Z_{1,1}=4-2=2,
\]
one gets $\dim\mathcal T_{a,b}=2$.

Now suppose $m=3$. It suffices first to treat the choice $(a,b)=(1,2)$.
Identify $M_{1\times2}(\mathbb C)$ with the row space
$W=(\mathbb C^2)^*$. Write
\[
\eta(y)
=
\begin{pmatrix}
\nu(y)&0\\
R(y)&V(y)
\end{pmatrix},
\qquad
\nu(y)+\operatorname{tr}V(y)=0.
\]
By \eqref{eq:R-right}, for a basis $e_1,e_2$ of $W$,
\[
R(e_1)e_2=R(e_2)e_1.
\]
The two sides have disjoint column supports and
therefore force $R(e_1)=R(e_2)=0$. Hence $R=0$.

Set
\[
\Omega(y)=V(y)-\nu(y)I_2.
\]
Equation~\eqref{eq:UV-identity} becomes
\[
y\Omega(z)=z\Omega(y).
\]
Thus the bilinear map $B(y,z)=y\Omega(z)$ is symmetric. Conversely, every
symmetric bilinear map $B\colon W\times W\to W$ determines a unique linear
map $\Omega\colon W\to M_2(\mathbb C)$ by this formula. Therefore the
solution space has dimension
\[
\dim\operatorname{Hom}(\operatorname{Sym}^2W,W)=3\cdot2=6.
\]
The trace condition uniquely recovers
\[
\nu(y)=-\frac13\operatorname{tr}\Omega(y),
\qquad
V(y)=\Omega(y)+\nu(y)I_2.
\]
Hence $\dim\mathcal T_{1,2}=6$. Transposition followed by the block swap
identifies the $(2,1)$ and $(1,2)$ systems, so
$\dim\mathcal T_{a,b}=6$ for either choice in \eqref{eq:ab-choices} when
$m=3$.
\end{proof}

\begin{corollary}
Assume $2\leq k\leq n-2$ and $m\in\{2,3\}$, and let $(a,b)$ be
one of the choices in \eqref{eq:ab-choices}. Then
\[
T_{\mathcal C_{a,b}}\mathfrak C_m
=
\mathcal T_{a,b}
\]
under the graph-chart identification
\eqref{eq:quotient-Grassmann-tangent}.
\end{corollary}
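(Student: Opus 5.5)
The plan is to combine the inclusion of Proposition~\ref{prop:quotient-tangent-inclusion} with a dimension count. Proposition~\ref{prop:quotient-tangent-inclusion} gives
\[
T_{\mathcal C_{a,b}}\mathfrak C_m\subseteq\mathcal T_{a,b}
\]
under the graph-chart identification \eqref{eq:quotient-Grassmann-tangent}. That proposition was stated in the range $2\leq k\leq n-2$ for either choice of $(a,b)$, and its proof uses only that $\mathcal C_{a,b}$ is commuting. It therefore applies verbatim when $m\in\{2,3\}$. Equality will then follow once both spaces are known to have the same finite dimension.

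For the lower bound on the left-hand side, I will use Proposition~\ref{prop:Cm-geometry}. It shows that $\mathfrak C_m$ is smooth and irreducible of dimension $2$ when $m=2$ and $6$ when $m=3$. Since $\mathcal C_{a,b}$ is commuting of dimension $ab+1=s_m$, it is a point of $\mathfrak C_m$. Smoothness of the (reduced) variety $\mathfrak C_m$ at this point gives
\[
\dim T_{\mathcal C_{a,b}}\mathfrak C_m=\dim\mathfrak C_m,
\]
which is $2$ or $6$ respectively. Proposition~\ref{prop:T-ab-dimension} computes $\dim\mathcal T_{a,b}$ to be the same number in each case. A subspace of a finite-dimensional space with equal dimension is the whole space, so the two spaces coincide.

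The one point needing care is that the tangent space in Proposition~\ref{prop:quotient-tangent-inclusion} and the tangent space whose dimension is given by smoothness are the same object. Both are the Zariski tangent space of the reduced subvariety $\mathfrak C_m\subseteq\operatorname{Gr}(s_m,M_m(\mathbb C))$, read in the affine chart attached to the complement $\mathcal Z_{a,b}$. The isomorphism of Proposition~\ref{prop:Cm-geometry} with a locus in $\operatorname{Gr}(s_m-1,\mathfrak{sl}_m)$ is an algebraic isomorphism of varieties. It therefore preserves smoothness and dimension, even though the tangent spaces there are computed in a different chart. I expect no further obstacle; the substantive input, the smoothness of the $m=3$ locus, is already supplied by Iliev--Manivel through Proposition~\ref{prop:Cm-geometry}.
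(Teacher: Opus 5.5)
Your proposal is correct and follows the paper's proof exactly. The paper also argues that the inclusion from Proposition~\ref{prop:quotient-tangent-inclusion}, the smoothness and dimension of $\mathfrak C_m$ from Proposition~\ref{prop:Cm-geometry}, and the matching value of $\dim\mathcal T_{a,b}$ from Proposition~\ref{prop:T-ab-dimension} together force equality. Your extra remark on chart independence, that the isomorphism with the abelian locus in $\operatorname{Gr}(s_m-1,\mathfrak{sl}_m)$ transports smoothness and dimension to the point $\mathcal C_{a,b}$, is a correct and harmless elaboration.
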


\begin{proof}
Proposition~\ref{prop:quotient-tangent-inclusion} gives the inclusion from
left to right. Proposition~\ref{prop:Cm-geometry} shows that
$\mathfrak C_m$ is smooth at $\mathcal C_{a,b}$ and has dimension $2$ for
$m=2$ and $6$ for $m=3$. Proposition~\ref{prop:T-ab-dimension} gives the
same dimensions for $\mathcal T_{a,b}$. Hence the inclusion is an equality.
\end{proof}

\subsection[The families Y+ and Y-]{\texorpdfstring{The families $\mathfrak Y^+_{n,k}$ and $\mathfrak Y^-_{n,k}$}{The families Y+ and Y-}}

Recall the spaces $\mathcal V_{\mathcal E}(\mathcal C)$ and the families
$\mathfrak Y^\pm_{n,k}$ defined in \eqref{eq:V-EC} and
\eqref{eq:Y-minus}.

\begin{proposition}\label{prop:Y-geometry}
Assume $2\leq k\leq n-2$ and $m\in\{2,3\}$. Then the sets
$\mathfrak Y^+_{n,k}$ and $\mathfrak Y^-_{n,k}$ are closed irreducible
subvarieties of $\mathfrak X_{n,k}$, and
\[
\dim\mathfrak Y^+_{n,k}
=
\dim\mathfrak Y^-_{n,k}
=
k(n-k)+\dim\mathfrak C_m.
\]
\end{proposition}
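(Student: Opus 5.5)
We realize $\mathfrak Y^+_{n,k}$ as the image of a projective, irreducible incidence variety and then read off closedness, irreducibility and dimension. Let $\pi\colon\mathcal Q_{\mathrm{taut}}\to\operatorname{Gr}(k,n)$ be the tautological quotient bundle, with fibre $\mathcal Q_{\mathcal E}=\mathbb C^n/\mathcal E$ over $\mathcal E$. Consider the relative commuting locus
\[
\mathfrak P=\bigl\{(\mathcal E,\mathcal C):\mathcal E\in\operatorname{Gr}(k,n),\ \mathcal C\in\operatorname{Gr}(s_m,\operatorname{End}(\mathcal Q_{\mathcal E})),\ \mathcal C\text{ commuting}\bigr\}.
\]
This is a fibre bundle over $\operatorname{Gr}(k,n)$ with fibre $\mathfrak C_m$. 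Indeed, it is $\operatorname{GL}_n$-equivariant and $\operatorname{GL}_n$ acts transitively on the base, so $\mathfrak P\cong\operatorname{GL}_n\times^{P_k}\mathfrak C_m$, where $P_k$ is the stabilizer of $\mathcal E$ and acts on $\mathfrak C_m$ through its Levi factor on $\mathcal Q_{\mathcal E}$. By Proposition~\ref{prop:Cm-geometry}, $\mathfrak C_m$ is smooth, irreducible and projective of dimension $2$ or $6$. Hence $\mathfrak P$ is a projective, irreducible (smooth) variety of dimension $k(n-k)+\dim\mathfrak C_m$.

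Next we define $\Phi\colon\mathfrak P\to\operatorname{Gr}(d_{n,k},M_n(\mathbb C))$ by $(\mathcal E,\mathcal C)\mapsto\mathcal V_{\mathcal E}(\mathcal C)$. This is well defined because $\mathcal V_{\mathcal E}(\mathcal C)$ is the preimage of $\mathcal C$ under the surjection $\mathfrak p_{\mathcal E}\to\operatorname{End}(\mathcal Q_{\mathcal E})$, where $\mathfrak p_{\mathcal E}$ is the parabolic subalgebra stabilizing $\mathcal E$. The kernel of this surjection is $\operatorname{Hom}(\mathbb C^n,\mathcal E)$, of dimension $nk$, so $\dim\mathcal V_{\mathcal E}(\mathcal C)=nk+s_m=d_{n,k}$. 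To see that $\Phi$ is a morphism, work in local trivializations: in the chart where $\mathcal E$ is the graph of a map $\mathbb C^k\to\mathbb C^m$, conjugate by the corresponding unipotent matrix. This reduces everything to the fixed $\mathcal E_0$, where $\Phi$ is the composite of $\mathcal C\mapsto$ (preimage in the fixed $\mathfrak p_{\mathcal E_0}$), a linear Schubert-type embedding, with conjugation. The image lies in $\mathfrak X_{n,k}$: for $S,T\in\mathcal V_{\mathcal E}(\mathcal C)$ the commutator preserves $\mathcal E$ and induces $[\overline S,\overline T]=0$ on the quotient, so its image lies in $\mathcal E$ and its rank is at most $k$. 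Since $\mathfrak P$ is projective, $\Phi$ is proper, so $\mathfrak Y^+_{n,k}=\Phi(\mathfrak P)$ is closed and irreducible.

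For the dimension we show that $\Phi$ is injective; it is then bijective onto its image, so $\dim\mathfrak Y^+_{n,k}=\dim\mathfrak P$. Injectivity requires recovering $\mathcal E$ intrinsically from $\mathcal V=\mathcal V_{\mathcal E}(\mathcal C)$, and this is the main obstacle. We would use
\[
\mathcal E=\sum_{R\in\mathcal V,\ \operatorname{rank}R\le k}\operatorname{im}R
\quad\text{or}\quad
\mathcal E=\sum_{R\in\mathcal J}\operatorname{im}R,
\]
where $\mathcal J=\{R\in\mathcal V:R\mathcal V\subseteq\ldots\}$ is a suitable annihilator-type subspace. The cleanest candidate is the subspace $\operatorname{Hom}(\mathbb C^n,\mathcal E)\subseteq\mathcal V$, which we characterize as the set of $R\in\mathcal V$ with $R\mathcal V\subseteq\mathcal V$, or via commutators: since $k\ge2$ we have $[\mathcal V,\mathcal V]\supseteq[\operatorname{End}\mathcal E\text{-part},\operatorname{Hom}(\mathbb C^n,\mathcal E)]$. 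We would first try the commutator route. The span of the images of all $[S,T]$ with $S,T\in\mathcal V$ lies in $\mathcal E$ because $\mathcal C$ is commuting, and it equals $\mathcal E$ because the commutators of $\operatorname{End}(\mathcal E)$-type elements already span $\mathfrak{sl}(\mathcal E)$, whose images fill $\mathcal E$. Thus $\mathcal E=\sum\operatorname{im}[S,T]$ is recovered, and then $\mathcal C=\overline{\mathcal V}$. This gives injectivity on points, which suffices for the dimension count (alternatively, $\Phi$ is $\operatorname{GL}_n$-equivariant and has finite fibres).

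Finally, transposition is a linear automorphism of $M_n(\mathbb C)$ preserving $\mathfrak X_{n,k}$, as in the proof of Proposition~\ref{prop:component-Akab}. It therefore maps $\mathfrak Y^+_{n,k}$ isomorphically onto $\mathfrak Y^-_{n,k}$, which transfers all the properties established above to $\mathfrak Y^-_{n,k}$.
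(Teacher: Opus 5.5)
Your proposal is correct and takes essentially the same route as the paper. Both arguments realize $\mathfrak Y^+_{n,k}$ as the injective image of the projective, irreducible relative commuting locus over $\operatorname{Gr}(k,n)$, which is locally $U\times\mathfrak C_m$. Both check that the map is a morphism by unipotent graph-chart conjugation, recover $\mathcal E$ as the span of commutator images, and pass to $\mathfrak Y^-_{n,k}$ by transposition.

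You should delete the tentative alternatives for recovering $\mathcal E$. The rank-$\le k$ candidate is false in general: an element acting as $0$ on $\mathcal E$ and as a rank-one element of $\mathcal C$ on a complement lies in $\mathcal V$, has rank $1\le k$, and has image outside $\mathcal E$. The commutator argument you settle on is exactly the paper's.
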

\begin{proof}
Put \(B=\operatorname{Gr}(k,n)\), and let
\(\mathcal Q_{\mathrm{taut}}\) be the tautological quotient bundle on \(B\).
Consider
\[
\mathcal P^+_{n,k}
=
\left\{
(\mathcal E,\mathcal C)\in
\operatorname{Gr}_{B}
\bigl(s_m,\operatorname{End}(\mathcal Q_{\mathrm{taut}})\bigr):
[C,D]=0
\text{ for all }C,D\in\mathcal C
\right\}.
\]
On every open set \(U\subseteq B\) over which
\(\mathcal Q_{\mathrm{taut}}\) is trivial, one has
\[
\mathcal P^+_{n,k}|_U
\cong
U\times\mathfrak C_m.
\]
Thus \(\mathcal P^+_{n,k}\) is closed in $\operatorname{Gr}_{B}
\bigl(s_m,\operatorname{End}(\mathcal Q_{\mathrm{taut}})\bigr)$ and is
therefore projective. Since \(B\) and \(\mathfrak C_m\) are irreducible,
the local product description also gives
\[
\mathcal P^+_{n,k}\ \text{irreducible},
\qquad
\dim\mathcal P^+_{n,k}
=
k(n-k)+\dim\mathfrak C_m.
\]

Define
\[
\iota:\mathcal P^+_{n,k}
\longrightarrow
\operatorname{Gr}(d_{n,k},M_n(\mathbb C)),
\qquad
(\mathcal E,\mathcal C)
\longmapsto
\mathcal V_{\mathcal E}(\mathcal C).
\]
We verify directly that this assignment is algebraic. Fix a decomposition
\[
\mathbb C^n=\mathcal E_0\oplus\mathcal F_0,
\qquad
\dim\mathcal E_0=k,
\quad
\dim\mathcal F_0=m,
\]
and consider the standard affine chart consisting of the graphs
\[
\mathcal E_Z
=
\{x+Zx:x\in\mathcal E_0\},
\qquad
Z\in\operatorname{Hom}(\mathcal E_0,\mathcal F_0).
\]
Set
\[
g_Z=
\begin{pmatrix}
I_k&0\\
Z&I_m
\end{pmatrix},
\qquad
g_Z^{-1}
=
\begin{pmatrix}
I_k&0\\
-Z&I_m
\end{pmatrix}.
\]
Then \(g_Z\mathcal E_0=\mathcal E_Z\), and \(g_Z\) gives the corresponding
identification
\(\mathbb C^n/\mathcal E_0\cong\mathbb C^n/\mathcal E_Z\).

Let
\[
\mathcal K
=
\left\{
\begin{pmatrix}
A&X\\
0&0
\end{pmatrix}:
A\in M_k(\mathbb C),\
X\in M_{k\times m}(\mathbb C)
\right\}.
\]
Under the local identification
\(\mathcal P^+_{n,k}|_U\cong U\times\mathfrak C_m\), one has
\[
\iota(\mathcal E_Z,\mathcal C)
=
g_Z
\left(
\mathcal K
\oplus
\left\{
\begin{pmatrix}
0&0\\
0&C
\end{pmatrix}:
C\in\mathcal C
\right\}
\right)
g_Z^{-1}.
\]
The map
\[
\mathcal C
\longmapsto
\mathcal K
\oplus
\left\{
\begin{pmatrix}
0&0\\
0&C
\end{pmatrix}:
C\in\mathcal C
\right\}
\]
is a morphism between Grassmannians, and both \(g_Z\) and \(g_Z^{-1}\)
depend polynomially on \(Z\). Hence the displayed formula is regular on
the chart. These local formulas agree on overlaps because they all describe
the intrinsic space \(\mathcal V_{\mathcal E}(\mathcal C)\). Thus
\(\iota\) is a morphism.

Its image is \(\mathfrak Y^+_{n,k}\), which is closed because
\(\mathcal P^+_{n,k}\) is projective. Moreover,
\(\mathfrak Y^+_{n,k}\subseteq\mathfrak X_{n,k}\): if
\(S,T\in\mathcal V_{\mathcal E}(\mathcal C)\), then
\[
[\overline S,\overline T]=0
\]
on \(\mathbb C^n/\mathcal E\), and therefore
\[
\operatorname{im}[S,T]\subseteq\mathcal E,
\qquad
\operatorname{rank}[S,T]\leq k.
\]

It remains to determine the fibers of \(\iota\). For a subspace
\(\mathcal V=\mathcal V_{\mathcal E}(\mathcal C)\), put
\[
\mathcal E(\mathcal V)
=
\sum_{S,T\in\mathcal V}\operatorname{im}[S,T].
\]
Every commutator image is contained in \(\mathcal E\), so
\[
\mathcal E(\mathcal V)\subseteq\mathcal E.
\]
Conversely, since \(k\geq2\), choose a basis of \(\mathcal E\), and let
\(A_\ast,B_\ast\in\operatorname{End}(\mathcal E)\) be represented in this
basis by the matrices in \eqref{eq:AB-star}. Then
\([A_\ast,B_\ast]\) is invertible. Relative to any complement of
\(\mathcal E\), the operators
\[
S=
\begin{pmatrix}
A_\ast&0\\
0&0
\end{pmatrix},
\qquad
T=
\begin{pmatrix}
B_\ast&0\\
0&0
\end{pmatrix}
\]
belong to \(\mathcal V_{\mathcal E}(\mathcal C)\), and
\(\operatorname{im}[S,T]=\mathcal E\). Hence
\[
\mathcal E(\mathcal V)=\mathcal E.
\]
Thus \(\mathcal V\) determines \(\mathcal E\), and then it determines
\(\mathcal C\) by
\[
\mathcal C
=
\{\overline T:T\in\mathcal V\}
\subseteq
\operatorname{End}(\mathbb C^n/\mathcal E).
\]
Therefore \(\iota\) is injective. Its fibers are zero-dimensional, so the
fiber-dimension theorem gives
\[
\dim\mathfrak Y^+_{n,k}
=
\dim\mathcal P^+_{n,k}
=
k(n-k)+\dim\mathfrak C_m.
\]
The image is irreducible because \(\mathcal P^+_{n,k}\) is irreducible.

Finally, transposition is an algebraic automorphism of the ambient
Grassmannian, so the same conclusions hold for
\(\mathfrak Y^-_{n,k}\).
\end{proof}

\section{Irreducible components}\label{sec:components}

\subsection{A dimension lemma}

\begin{lemma}\label{lem:component-dimension-criterion}
Let $X$ be an algebraic set over $\mathbb C$, let $p\in X$, and let
$Y\subseteq X$ be a closed irreducible subvariety containing $p$. If
\[
\dim_{\mathbb C}T_pX\leq\dim Y,
\]
then $p$ is a nonsingular point of $X$, lies on a unique irreducible
component of $X$, and that component is $Y$.
\end{lemma}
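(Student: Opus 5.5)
The argument uses only the basic inequality between local dimension and tangent space dimension, plus the decomposition of $X$ near $p$ into irreducible components.

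First, decompose $X=X_1\cup\dots\cup X_r$ into irreducible components. Since $Y$ is closed and irreducible, it is contained in some component, say $Y\subseteq X_1$, and then $p\in X_1$.

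Second, recall the standard inequality $\dim_{\mathbb C}T_pX\geq\dim_p X$, where $\dim_pX$ is the maximum dimension of an irreducible component of $X$ through $p$. This holds for any algebraic set: the tangent space at $p$ has dimension at least the Krull dimension of the local ring $\mathcal O_{X,p}$, and that Krull dimension equals $\dim_pX$.

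Third, chain the inequalities. For every component $X_i$ containing $p$,
\[
\dim Y\leq\dim X_1\leq\dim_pX\leq\dim T_pX\leq\dim Y,
\]
and $\dim X_i\leq\dim_pX$ as well. Hence all these quantities are equal. In particular $\dim X_1=\dim Y$. Since $Y\subseteq X_1$ is a closed irreducible subset of the same dimension, $Y=X_1$. So $Y$ is an irreducible component of $X$ containing $p$.

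Fourth, show uniqueness. Since $\dim T_pX=\dim_pX$, the local ring $\mathcal O_{X,p}$ is regular, because its embedding dimension equals its Krull dimension. A regular local ring is a domain, and the minimal primes of $\mathcal O_{X,p}$ correspond to the irreducible components of $X$ through $p$. Hence exactly one component passes through $p$, and it is $Y$. Thus $p$ is a nonsingular point of $X$.

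The main point needing care is the use of the reduced structure. Here $T_pX$ is defined from the vanishing ideal, so $\mathcal O_{X,p}$ is the local ring of the reduced algebraic set. The regularity criterion (embedding dimension equals Krull dimension, which forces a domain) then applies directly. I would cite a standard reference, such as Hartshorne I.5 or Shafarevich II.1–2, for the inequality $\dim T_pX\geq\dim_pX$ and for the fact that regular local rings are domains.
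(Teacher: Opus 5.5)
Your proof is correct and follows essentially the same route as the paper. Both rest on the chain $\dim Y\le\dim_pX\le\dim_{\mathbb C}T_pX\le\dim Y$, regularity of $\mathcal O_{X,p}$ from the equality of embedding and Krull dimension, and the fact that a regular local ring is a domain; the only difference is order (you place $Y$ in a component $X_1$ and get $Y=X_1$ by dimension before proving uniqueness, while the paper first finds the unique component $Z$ through $p$ and then shows $Y=Z$).
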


\begin{proof}
The standard local-dimension inequalities give
\[
\dim Y
\leq
\dim_pX
\leq
\dim_{\mathbb C}T_pX
\leq
\dim Y.
\]
Hence equality holds throughout. Thus the local ring
$\mathcal O_{X,p}$ has Krull dimension equal to its embedding dimension,
so it is regular. Therefore $p$ is nonsingular. Since a regular local ring
is a domain, $p$ lies on a unique irreducible component $Z$ of $X$.

Since $Y$ is irreducible and contains $p$, it is contained in $Z$. Moreover,
\[
\dim Z=\dim_pX=\dim Y.
\]
A proper closed irreducible subvariety of $Z$ has strictly smaller dimension,
so $Y=Z$.
\end{proof}

\subsection[Components containing A(k,a,b)]{\texorpdfstring{Components containing $\mathcal A_{k,a,b}$}{Components containing A(k,a,b)}}

\begin{proposition}\label{prop:model-component-mge4}
Assume $2\leq k\leq n-2$ and $m\geq4$, and let $(a,b)$ be one of
the choices in \eqref{eq:ab-choices}. Then the unique irreducible component of
$\mathfrak X_{n,k}$ through $\mathcal A_{k,a,b}$ is
$\mathcal O_{k,a,b}$.
\end{proposition}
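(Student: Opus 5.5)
The plan is to apply Lemma~\ref{lem:component-dimension-criterion} with $X=\mathfrak X_{n,k}$, $p=\mathcal A_{k,a,b}$, and $Y=\mathcal O_{k,a,b}$. All three hypotheses of that lemma are already available.

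First, $\mathfrak X_{n,k}$ is an algebraic set, since Proposition~\ref{prop:X-closed} shows it is closed in the Grassmannian. Its Zariski tangent space at $\mathcal A_{k,a,b}$ is computed in the affine chart $\mathcal U_{\mathcal W}$ via the graph map $\Theta$. Because $\mathcal U_{\mathcal W}$ is an open neighborhood of $\mathcal A_{k,a,b}$, the local dimension and tangent space are unaffected by passing to this chart.

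Second, Proposition~\ref{prop:orbit-Akab} shows that $Y=\mathcal O_{k,a,b}$ is a closed, irreducible, smooth subvariety of $\mathfrak X_{n,k}$ of dimension $k(n-k)+ab$. It contains $\mathcal A_{k,a,b}$ by definition.

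Third, we need the tangent-space bound $\dim T_{\mathcal A_{k,a,b}}\mathfrak X_{n,k}\le\dim Y$. There are two equivalent ways to get it:
\begin{itemize}
\item By Theorem~\ref{thm:tangent-equality-mge4}, $T_{\mathcal A_{k,a,b}}\mathfrak X_{n,k}=T_{\mathcal A_{k,a,b}}\mathcal O_{k,a,b}$. Since $\mathcal O_{k,a,b}$ is smooth, the latter has dimension $\dim\mathcal O_{k,a,b}=k(n-k)+ab$.
\item Alternatively, Corollary~\ref{cor:ambient-tangent-injection} combined with Proposition~\ref{prop:T-ab-rigid} gives $\dim T_{\mathcal A_{k,a,b}}\mathfrak X_{n,k}\le k(n-k)+ab$ directly.
\end{itemize}
Either route yields $\dim T_p X\le\dim Y$.

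Lemma~\ref{lem:component-dimension-criterion} then says that $\mathcal A_{k,a,b}$ is a nonsingular point of $\mathfrak X_{n,k}$, lies on a unique irreducible component, and that component is $\mathcal O_{k,a,b}$.

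There is no real obstacle here, since the substantive work was done in Sections~\ref{sec:tangent-vectors} and~\ref{sec:m-ge-4}. The one point to check carefully is consistency: the tangent space in Theorem~\ref{thm:tangent-equality-mge4} must be the Zariski tangent space of the reduced algebraic set $\mathfrak X_{n,k}$ as used in the lemma, not that of some scheme structure. This holds because Lemma~\ref{lem:tangent-master} derived its constraints from equations vanishing on the reduced set $\mathfrak X_{n,k}\cap\mathcal U_{\mathcal W}$. So every tangent vector of the reduced set satisfies them, which is exactly the direction of inequality the lemma needs.
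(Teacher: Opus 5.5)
Your proposal is correct and follows the paper's own proof: apply Lemma~\ref{lem:component-dimension-criterion} with $Y=\mathcal O_{k,a,b}$, taking closedness, irreducibility and dimension of the orbit from Proposition~\ref{prop:orbit-Akab} and the tangent-space equality from Theorem~\ref{thm:tangent-equality-mge4}. Your alternative bound via Corollary~\ref{cor:ambient-tangent-injection} and Proposition~\ref{prop:T-ab-rigid} is equally valid, since it is exactly the argument on which that theorem rests.
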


\begin{proof}
By Proposition~\ref{prop:orbit-Akab}, the orbit $\mathcal O_{k,a,b}$ is a
closed irreducible subvariety of $\mathfrak X_{n,k}$ through
$\mathcal A_{k,a,b}$ and has dimension $k(n-k)+ab$. Theorem~\ref{thm:tangent-equality-mge4} and
Proposition~\ref{prop:orbit-Akab} give
\[
\dim T_{\mathcal A_{k,a,b}}\mathfrak X_{n,k}
=
\dim\mathcal O_{k,a,b}.
\]
Lemma~\ref{lem:component-dimension-criterion}, applied with
$Y=\mathcal O_{k,a,b}$, gives the result.
\end{proof}

\begin{proposition}\label{prop:model-component-m23}
Assume $2\leq k\leq n-2$ and $m\in\{2,3\}$, and let $(a,b)$ be
one of the choices in \eqref{eq:ab-choices}. Then the unique irreducible
component of $\mathfrak X_{n,k}$ through $\mathcal A_{k,a,b}$ is
$\mathfrak Y^+_{n,k}$.
\end{proposition}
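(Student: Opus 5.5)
The plan mirrors the proof of Proposition~\ref{prop:model-component-mge4}: apply Lemma~\ref{lem:component-dimension-criterion} with $X=\mathfrak X_{n,k}$, $p=\mathcal A_{k,a,b}$ and $Y=\mathfrak Y^+_{n,k}$. Three facts are needed: that $\mathfrak Y^+_{n,k}$ is a closed irreducible subvariety of $\mathfrak X_{n,k}$; that it contains $\mathcal A_{k,a,b}$; and that $\dim T_{\mathcal A_{k,a,b}}\mathfrak X_{n,k}\leq\dim\mathfrak Y^+_{n,k}$.

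The first fact is Proposition~\ref{prop:Y-geometry}, which also gives $\dim\mathfrak Y^+_{n,k}=k(n-k)+\dim\mathfrak C_m$. For the second, take $\mathcal E$ to be the first $k$ coordinates and $\mathcal C=\mathcal C_{a,b}\subseteq\operatorname{End}(\mathcal Q_{\mathcal E})$, identifying $\mathcal Q_{\mathcal E}$ with $\mathcal F\oplus\mathcal G$. The space $\mathcal C_{a,b}$ is commuting of dimension $ab+1=s_m$. Comparing the defining condition \eqref{eq:V-EC} with the two-block form \eqref{eq:A-two-block}, we get $\mathcal V_{\mathcal E}(\mathcal C_{a,b})=\mathcal A_{k,a,b}$, so $\mathcal A_{k,a,b}\in\mathfrak Y^+_{n,k}$.

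For the tangent bound, Corollary~\ref{cor:ambient-tangent-injection} gives
\[
\dim T_{\mathcal A_{k,a,b}}\mathfrak X_{n,k}\leq k(n-k)+\dim\mathcal T_{a,b}.
\]
Proposition~\ref{prop:T-ab-dimension} gives $\dim\mathcal T_{a,b}=2$ for $m=2$ and $6$ for $m=3$. Proposition~\ref{prop:Cm-geometry} gives the same values for $\dim\mathfrak C_m$. Hence
\[
\dim T_{\mathcal A_{k,a,b}}\mathfrak X_{n,k}\leq k(n-k)+\dim\mathfrak C_m=\dim\mathfrak Y^+_{n,k}.
\]
Lemma~\ref{lem:component-dimension-criterion} then shows that $\mathcal A_{k,a,b}$ is a nonsingular point of $\mathfrak X_{n,k}$ lying on a unique component, and that this component is $\mathfrak Y^+_{n,k}$.

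The only step needing care is the identification $\mathcal V_{\mathcal E}(\mathcal C_{a,b})=\mathcal A_{k,a,b}$, and it is immediate from the definitions. The substantive work, bounding $\dim\mathcal T_{a,b}$ and establishing the smoothness and irreducibility of $\mathfrak C_m$, is already done in the earlier results, so no real obstacle is expected.
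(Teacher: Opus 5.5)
Your proposal is correct and follows the paper's proof essentially step for step. You combine Proposition~\ref{prop:Y-geometry}, the tangent bound of Corollary~\ref{cor:ambient-tangent-injection}, and the equality $\dim\mathcal T_{a,b}=\dim\mathfrak C_m$, then apply Lemma~\ref{lem:component-dimension-criterion}; your only addition is the explicit check $\mathcal V_{\mathcal E}(\mathcal C_{a,b})=\mathcal A_{k,a,b}$, which the paper leaves implicit here and states only later, in the distinctness proof.
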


\begin{proof}
Put
\[
D=k(n-k)+\dim\mathfrak C_m.
\]
By Proposition~\ref{prop:Y-geometry},
$\mathfrak Y^+_{n,k}$ is a closed irreducible subvariety of
$\mathfrak X_{n,k}$ through $\mathcal A_{k,a,b}$ and has dimension $D$.
Corollary~\ref{cor:ambient-tangent-injection} gives
\[
\dim T_{\mathcal A_{k,a,b}}\mathfrak X_{n,k}
\leq
k(n-k)+\dim\mathcal T_{a,b}.
\]
By Propositions~\ref{prop:Cm-geometry} and~\ref{prop:T-ab-dimension},
$\dim\mathcal T_{a,b}=\dim\mathfrak C_m$. Hence
\[
\dim T_{\mathcal A_{k,a,b}}\mathfrak X_{n,k}
\leq D
=
\dim\mathfrak Y^+_{n,k}.
\]
Lemma~\ref{lem:component-dimension-criterion}, applied with
$Y=\mathfrak Y^+_{n,k}$, gives the result.
\end{proof}

\subsection{Distinctness of the components}

\begin{proposition}\label{prop:components-distinct}
Assume $2\leq k\leq n-2$.
\begin{enumerate}
\item If $m\geq4$, then the orbits
$\mathcal O_{k,a,b}$ and $\mathcal O_{k,a,b}^{\mathsf T}$, as $(a,b)$
ranges over the choices in \eqref{eq:ab-choices}, are pairwise distinct.
\item If $m\in\{2,3\}$, then
$\mathfrak Y^+_{n,k}\cap\mathfrak Y^-_{n,k}=\varnothing$.
\end{enumerate}
\end{proposition}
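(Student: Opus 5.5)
Both parts will be proved by attaching to each space $\mathcal V$ in $\mathfrak X_{n,k}$ intrinsic data that is preserved by conjugation and behaves predictably under transposition, and then comparing this data on representatives. Since each $\mathcal O$ is a single orbit, two such orbits are either equal or disjoint; hence in part (1) it suffices to show that the representatives $\mathcal A_{k,a,b}$, $\mathcal A_{k,a,b}^{\mathsf T}$ for the (at most two) choices of $(a,b)$ are pairwise non-conjugate.

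For part (1) we use invariant subspaces. By Proposition~\ref{prop:orbit-Akab}, the stabilizer of $\mathcal A_{k,a,b}$ under conjugation is the parabolic subgroup $P$ of the flag $\mathcal E\subset\mathcal E\oplus\mathcal F$. A conjugacy $g\mathcal A_{k,a,b}g^{-1}=\mathcal A_{k,a',b'}$ would make $g^{-1}P'g=P$, so that the two parabolic subgroups are conjugate. Conjugate parabolics have the same orbit types and dimensions on subspaces. Concretely, the common invariant subspaces of $\mathcal A_{k,a,b}$ are determined as follows. Since $M_k$ is contained in the upper-left block, any common invariant subspace $W$ either contains $\mathcal E$ or meets it trivially. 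Using the arbitrary $B,C$ blocks, a nonzero invariant $W$ must contain $\mathcal E$. The invariant subspaces containing $\mathcal E$ then correspond to $\mathcal C_{a,b}$-invariant subspaces of $\mathcal Q$, and these are exactly the subspaces containing $\mathcal F$ together with the subspaces of $\mathcal F$. It follows that the minimal nonzero invariant subspace is $\mathcal E$, of dimension $k$. For $\mathcal A^{\mathsf T}$, the invariant subspaces are the annihilators of those of $\mathcal A$. The minimal nonzero invariant subspace of $\mathcal A^{\mathsf T}$ is therefore the annihilator of the maximal proper invariant subspace of $\mathcal A$, namely $\mathcal E\oplus\mathcal F\oplus H$ with $H$ a hyperplane of $\mathcal G$. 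This subspace has dimension $1$, whereas $k\ge2$, which separates $\mathcal O$ from $\mathcal O^{\mathsf T}$.

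To separate $(a,b)$ from $(b,a)$ when $m$ is odd, we use the flag recovered in \eqref{eq:intrinsic-flag-general}: it has $\dim(\mathcal E\oplus\mathcal F)=k+a$, and this number is invariant under conjugation because $J^2$ is characteristic. The same argument applied to $\mathcal A^{\mathsf T}$, whose flag from $J^2$ has dimensions $(n-k-a,\,n-k)$, distinguishes the two transposed orbits from each other. One subtle point needs care: the invariant comparing $\mathcal O_{k,a,b}$ with $\mathcal O^{\mathsf T}_{k,b,a}$. Here the minimal invariant subspace dimension already separates them, since $k\ne1$. This completes part (1).

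For part (2), suppose $\mathcal V=\mathcal V_{\mathcal E}(\mathcal C)=\mathcal V_{\mathcal E'}(\mathcal C')^{\mathsf T}$. The first description shows that $\mathcal V$ has a common invariant subspace $\mathcal E$ of dimension $k$ on which it restricts to all of $\operatorname{End}(\mathcal E)$. The second description shows that $\mathcal V$ has an invariant subspace $\mathcal E'^{\perp}$ of dimension $m$ with quotient $\mathbb C^n/\mathcal E'^{\perp}$. Moreover, $\mathcal V$ contains every operator mapping $\mathbb C^n$ into $\mathcal E$ and vanishing on $\mathcal E$ \emph{modulo} nothing, so in particular $\mathcal V$ contains $\operatorname{Hom}(\mathbb C^n/\mathcal E,\mathcal E)$, realized as operators with image in $\mathcal E$ and kernel containing $\mathcal E$. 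An invariant subspace $W$ must therefore satisfy one of two conditions. Either $W\supseteq\mathcal E$: if $W\not\subseteq\mathcal E$, these operators send $W$ onto $\mathcal E$. Or $W\subseteq\mathcal E$, in which case irreducibility of $\operatorname{End}(\mathcal E)$ forces $W\in\{0,\mathcal E\}$. Applying this to $W=\mathcal E'^{\perp}$ of dimension $m$: this subspace is not $0$, and it is not $\mathcal E$ unless $m=k$. It therefore contains $\mathcal E$ whenever $m\ge k$; when $m<k$ it must equal $\mathcal E$ or $0$, which is impossible. Symmetrically, $\mathcal E^{\perp}$ is $\mathcal V^{\mathsf T}$-invariant.

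The main obstacle is the residual case $\mathcal E\subseteq\mathcal E'^{\perp}$, which includes $m=k$ with $\mathcal E=\mathcal E'^{\perp}$. In that case $\mathcal E'\subseteq\mathcal E^{\perp}$, and $\mathcal V^{\mathsf T}$ restricted to $\mathcal E'$ is all of $\operatorname{End}(\mathcal E')$. We would then compare the restriction of $\mathcal V$ to $\mathcal E'^{\perp}\supseteq\mathcal E$: the second description makes this restriction the transpose of the quotient action of $\mathcal V_{\mathcal E'}(\mathcal C')$ on $\mathbb C^n/\mathcal E'$, namely $(\mathcal C')^{\mathsf T}$, which is commutative. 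Yet this restriction contains $\operatorname{End}(\mathcal E)$ acting on the invariant subspace $\mathcal E$, and $\operatorname{End}(\mathcal E)$ is noncommutative since $k\ge2$. This contradiction shows $\mathfrak Y^+_{n,k}\cap\mathfrak Y^-_{n,k}=\varnothing$.
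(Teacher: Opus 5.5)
Your proof is correct. For separating $\mathcal O_{k,a,b}$ from $\mathcal O_{k,a',b'}$ you use the same invariant as the paper: the dimension $k+a$ of the common kernel of $J^2$.

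You differ from the paper in how you separate untransposed spaces from transposed ones. The paper uses one numerical similarity invariant, $\dim\sum_{S,T\in\mathcal V}\operatorname{im}[S,T]$. Via the commutators $[A_\ast,B_\ast]$ and $[S_0,T_Y]=-T_Y$, it shows this equals $k$ on all of $\mathfrak Y^+_{n,k}$ and $n$ on all of $\mathfrak Y^-_{n,k}$. Since $\mathcal A_{k,a,b}=\mathcal V_{\mathcal E}(\mathcal C_{a,b})$, that single computation settles part~(1) and part~(2) together.

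You work instead with the lattice of common invariant subspaces. In part~(1), the minimal dimension of a nonzero invariant subspace is $k$ for $\mathcal A_{k,a,b}$ and $1$ for $\mathcal A^{\mathsf T}_{k,a',b'}$. In part~(2), you show every nonzero invariant subspace of $\mathcal V_{\mathcal E}(\mathcal C)$ contains $\mathcal E$. Hence $\mathcal E\subseteq\mathcal E'^{\perp}$, on which $\mathcal V$ acts through the commutative family dual to $\mathcal C'$. This contradicts the noncommutativity of $\operatorname{End}(\mathcal E)$. Your route is more structural and avoids explicit commutator images, but it needs two separate arguments where the paper needs one.

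Three small remarks:
\begin{enumerate}
\item When $b\geq2$, $\mathcal A_{k,a,b}$ has many maximal proper invariant subspaces, namely all $\mathcal E\oplus\mathcal F\oplus H$. They are all hyperplanes, so your dimension-one conclusion stands, but you should say ``a'' rather than ``the''.
\item Your split into $m<k$ and $m\geq k$ in part~(2) is unnecessary. Your dichotomy already says every nonzero invariant subspace contains $\mathcal E$.
\item Your part~(1) invariant would settle part~(2) directly. $\mathcal C'$ is commutative, so it has a common invariant hyperplane in $\mathbb C^n/\mathcal E'$. Hence every space in $\mathfrak Y^-_{n,k}$ has an invariant line. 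Every space in $\mathfrak Y^+_{n,k}$ has minimal nonzero invariant subspace $\mathcal E$, of dimension $k\geq2$.
\end{enumerate}
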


\begin{proof}
We first distinguish the orbits $\mathcal O_{k,a,b}$ for different ordered
pairs $(a,b)$. Suppose that
\[
\mathcal O_{k,a,b}=\mathcal O_{k,a',b'}.
\]
Then $\mathcal A_{k,a,b}$ and $\mathcal A_{k,a',b'}$ are conjugate. Let
$J$ and $J'$ be their Jacobson radicals. Conjugation carries $J^2$ onto
$(J')^2$, so it preserves the dimension of the common kernel of the matrices
in the radical square. By \eqref{eq:intrinsic-flag-general},
\[
\dim\bigcap_{R\in J^2}\ker R=k+a,
\qquad
\dim\bigcap_{R\in (J')^2}\ker R=k+a'.
\]
Hence $a=a'$, and then $b=b'$ because $a+b=a'+b'=m$. Thus the orbits
$\mathcal O_{k,a,b}$ belonging to the two choices in
\eqref{eq:ab-choices} are distinct whenever the choices are distinct. Their
transpose orbits are also distinct.

It remains to distinguish the spaces above from their transpose families.
For a linear subspace $\mathcal V\subseteq M_n(\mathbb C)$, the integer
\begin{equation}\label{eq:commutator-image-dimension}
\dim_{\mathbb C}
\left(
\sum_{S,T\in\mathcal V}\operatorname{im}[S,T]
\right)
\end{equation}
is unchanged by similarity. Let
$\mathcal V=\mathcal V_{\mathcal E}(\mathcal C)$ be a space in
$\mathfrak Y^+_{n,k}$. Every commutator in $\mathcal V$ induces zero on
$\mathbb C^n/\mathcal E$, so every commutator image is contained in
$\mathcal E$. On the other hand, the matrices acting as $A_\ast$ and $B_\ast$ on
$\mathcal E$ and as zero on a complement belong to $\mathcal V$, and their
commutator has image $\mathcal E$. Therefore
\eqref{eq:commutator-image-dimension} equals $k$ for every
$\mathcal V\in\mathfrak Y^+_{n,k}$.

Now consider $\mathcal V^{\mathsf T}$. Choose
$g\in\operatorname{GL}_n(\mathbb C)$ such that
$g^{-1}\mathcal E$ is the first $k$-dimensional coordinate subspace, and put
$\mathcal V_0=g^{-1}\mathcal Vg$. Then $\mathcal V_0$ has the corresponding
upper block form, and
\[
\mathcal V_0^{\mathsf T}
=
g^{\mathsf T}\mathcal V^{\mathsf T}g^{-\mathsf T}.
\]
Thus $\mathcal V^{\mathsf T}$ is similar to
$\mathcal V_0^{\mathsf T}$, and it is enough to compute in this block form.
The space $\mathcal V_0^{\mathsf T}$ contains
\[
S_0=
\begin{pmatrix}
I_k&0\\
0&0
\end{pmatrix},
\qquad
T_Y=
\begin{pmatrix}
0&0\\
Y&0
\end{pmatrix}
\quad
\bigl(Y\in M_{m\times k}(\mathbb C)\bigr).
\]
Since $[S_0,T_Y]=-T_Y$, the images of these commutators span the last
$m$ coordinate directions. The space $\mathcal V_0^{\mathsf T}$ also contains
\[
\begin{pmatrix}
A_\ast&0\\
0&0
\end{pmatrix},
\qquad
\begin{pmatrix}
B_\ast&0\\
0&0
\end{pmatrix},
\]
whose commutator has image equal to the first $k$ coordinate directions.
Thus \eqref{eq:commutator-image-dimension} equals $n$ for every space in
$\mathfrak Y^-_{n,k}$.

Since $k<n$, the sets $\mathfrak Y^+_{n,k}$ and
$\mathfrak Y^-_{n,k}$ are disjoint. Since
$\mathcal A_{k,a,b}=\mathcal V_{\mathcal E}(\mathcal C_{a,b})$ for the
standard $k$-plane $\mathcal E$, the same calculation shows that no
$\mathcal O_{k,a,b}$ equals any transpose orbit
$\mathcal O_{k,a',b'}^{\mathsf T}$. This completes both parts.
\end{proof}

\begin{proof}[Proof of Theorem~\ref{thm:components-intro}]
Let $Z$ be an irreducible component of $\mathfrak X_{n,k}$. By
Proposition~\ref{prop:component-Akab}, after possibly replacing $Z$ by its
transpose image, $Z$ contains a point in the conjugacy orbit of some
$\mathcal A_{k,a,b}$. Since $Z$ is stable under conjugation, we may assume
that $\mathcal A_{k,a,b}\in Z$.

If $m\geq4$, Proposition~\ref{prop:model-component-mge4} gives
$Z=\mathcal O_{k,a,b}$. Undoing the optional transposition shows that every
component is one of the orbits $\mathcal O_{k,a,b}$ or
$\mathcal O_{k,a,b}^{\mathsf T}$. Conversely,
Proposition~\ref{prop:model-component-mge4} and transposition show that every
such orbit is an irreducible component. Proposition~\ref{prop:components-distinct}
shows that they are pairwise distinct. Since distinct conjugacy orbits are
disjoint, these components are pairwise disjoint. When $m$ is even, the two choices in
\eqref{eq:ab-choices} coincide, so there are exactly two components. When
$m$ is odd, the two choices are distinct, so there are exactly four.
Proposition~\ref{prop:orbit-Akab} and transposition give the flag-variety
description and the dimension, while
Theorem~\ref{thm:tangent-equality-mge4} gives the stated tangent-space
equality.

If $m\in\{2,3\}$, Proposition~\ref{prop:model-component-m23} gives
$Z=\mathfrak Y^+_{n,k}$. Undoing the optional transposition shows that every
component is either $\mathfrak Y^+_{n,k}$ or $\mathfrak Y^-_{n,k}$.
Proposition~\ref{prop:model-component-m23} and transposition show that both
are irreducible components, and
Proposition~\ref{prop:components-distinct} shows that they are disjoint.
Thus there are exactly two components. Proposition~\ref{prop:Y-geometry}
gives their dimension as
\[
k(n-k)+\dim\mathfrak C_m,
\]
and Proposition~\ref{prop:Cm-geometry} gives
$\dim\mathfrak C_2=2$ and $\dim\mathfrak C_3=6$.
\end{proof}

\section{Proof of the main theorem}\label{sec:main-proof}

\begin{proof}[Proof of Theorem~\ref{thm:main}]
Let $n\geq1$, let $0\leq k<n$, and let
$\mathcal V\subseteq M_n(\mathbb C)$ satisfy the hypotheses of
Theorem~\ref{thm:main}.

If $k=0$, the assertion follows from
Theorem~\ref{thm:commuting-classification}. If $k=1$ or
$k=n-1$, the assertion follows from
Theorem~\ref{thm:ORS-results}.

It remains to consider $2\leq k\leq n-2$. Then
$\mathcal V\in\mathfrak X_{n,k}$. If $m\geq4$, the first part of
Theorem~\ref{thm:components-intro} shows that, after a similarity and possibly
transposition, $\mathcal V=\mathcal A_{k,a,b}$ for one of the choices in
\eqref{eq:ab-choices}. This is exactly \eqref{eq:main-block-space} with
$\mathcal C=\mathcal C_{a,b}$.

Now suppose $m\in\{2,3\}$. The second part of
Theorem~\ref{thm:components-intro} shows that, after possibly transposing,
\[
\mathcal V=\mathcal V_{\mathcal E}(\mathcal C)
\]
for a $k$-plane $\mathcal E$ and an $s_m$-dimensional commuting subspace
$\mathcal C\subseteq\operatorname{End}(\mathbb C^n/\mathcal E)$. By
Theorem~\ref{thm:commuting-classification},
$\mathcal C$ is, up to similarity, one of the possibilities in
\ref{item:C-m3} or \ref{item:C-m2}, according as $m=3$ or $m=2$.
A similarity of the quotient lifts to a similarity of
$\mathbb C^n$ preserving $\mathcal E$. After choosing a complement of
$\mathcal E$, the space $\mathcal V_{\mathcal E}(\mathcal C)$ has exactly
the form \eqref{eq:main-block-space}. The possibilities in
\ref{item:C-m3} and \ref{item:C-m2} are stable under transposition up to
similarity, and both orders of $(a,b)$ are allowed.
This proves the remaining cases.
\end{proof}

\section*{Declaration of AI use} OpenAI's ChatGPT was used during the development of this work for assistance with literature searches, exploration and verification of mathematical arguments, and manuscript preparation. All mathematical claims, proofs, citations, and the final text were independently checked and approved by the author, who takes full responsibility for the work.

\end{document}